\newcommand*{\mailto}[1]{\href{mailto:#1}{\nolinkurl{#1}}}
\newcommand{\arxiv}[1]{\href{http://arxiv.org/abs/#1}{arXiv:#1}}
\newcommand{\R}{{\bbR}}
\newcommand{\N}{{\mathbb N}}
\newcommand{\bbC}{{\mathbb{C}}}
\newcommand{\bbN}{{\mathbb{N}}}
\newcommand{\bbR}{{\mathbb{R}}}
\newcommand{\bbS}{{\mathbb{S}}}
\newcommand{\cB}{{\mathcal B}}
\newcommand{\cH}{{\mathcal H}}
\newcommand{\cL}{{\mathcal L}}
\newcommand{\cX}{{\mathcal X}}
\newcommand{\cY}{{\mathcal Y}}
\newcommand{\p}{{\partial}}
\newcommand{\beq}{\begin{equation}}
\newcommand{\enq}{\end{equation}}
\DeclareMathOperator{\supp}{supp}
\DeclareMathOperator{\dom}{dom}
\DeclareMathOperator*{\slim}{s-lim}
\DeclareMathOperator*{\sgn}{sgn}
\newcommand{\dott}{\,\cdot\,}
\renewcommand{\ln}{\text{\rm ln}}
\newcommand{\no}{\notag}
\newcommand{\lb}{\label}
\newcommand{\f}{\frac}
\newcommand{\df}{\dfrac}
\newcommand{\ul}{\underline}
\newcommand{\ol}{\overline}
\newcommand{\wti}{\widetilde}
\newcommand{\Oh}{O}
\newcommand{\oh}{o}
\newcommand{\bi}{\bibitem}
\renewcommand{\ge}{\geqslant}
\renewcommand{\le}{\leqslant}
\renewcommand{\dot}{\overset{\textbf{\Large.}}}
\let\geq\geqslant
\let\leq\leqslant
\def\theequation{\@arabic\c@equation}
\numberwithin{equation}{section}
\newtheorem{theorem}{Theorem}[section]
\newtheorem{proposition}[theorem]{Proposition}
\newtheorem{lemma}[theorem]{Lemma}
\newtheorem{definition}[theorem]{Definition}
\newtheorem{hypothesis}[theorem]{Hypothesis}
\theoremstyle{remark}
\newtheorem{remark}[theorem]{Remark}
\begin{document}
\title[On Critical Dipoles in Dimensions $n\geq 3$]{On Critical Dipoles in Dimensions $n\geq 3$}

\author[S.\ B.\ Allan]{S. Blake Allan}
\address{Department of Mathematics, 
Baylor University, Sid Richardson Bldg., 1410 S.\,4th Street,
Waco, TX 76706, USA}
\email{\mailto{Blake\_Allan1@baylor.edu}}

\author[F.\ Gesztesy]{Fritz Gesztesy}
\address{Department of Mathematics, 
Baylor University, Sid Richardson Bldg., 1410 S.\,4th Street,
Waco, TX 76706, USA}
\email{\mailto{Fritz\_Gesztesy@baylor.edu}}
\urladdr{\url{http://www.baylor.edu/math/index.php?id=935340}}

\dedicatory{} 
\date{\today}
\thanks{{\it J. Diff. Eq.} (to appear).}
\subjclass[2010]{Primary: 35A23, 35J30; Secondary: 47A63, 47F05.}
\keywords{Hardy-type inequalities, Schr\"odinger operators, dipole potentials.}

\begin{abstract}
We reconsider generalizations of Hardy's inequality corresponding to the case of (point) dipole potentials 
$V_{\gamma}(x) = \gamma (u, x) |x|^{-3}$, $x \in \bbR^n \backslash \{0\}$, $\gamma \in [0,\infty)$, 
$u \in \bbR^n$, $|u|=1$, $n \in \bbN$, $n \geq 3$. More precisely, for $n \geq 3$, we provide an alternative proof of the existence of a critical dipole coupling constant $\gamma_{c,n} > 0$, such that 
\begin{align*} 
&\text{for all $\gamma \in [0,\gamma_{c,n}]$, and all $u \in \bbR^n$, $|u|=1$,} \\
&\quad \int_{\bbR^n} d^n x \, |(\nabla f)(x)|^2 \geq \pm \gamma \int_{\bbR^n} d^n x \, (u, x) |x|^{-3} |f(x)|^2, 
\quad f \in D^1(\bbR^n).      
\end{align*}  
with $D^1(\bbR^n)$ denoting the completion of $C_0^{\infty}(\bbR^n)$ with respect to the norm induced by the gradient. Here $\gamma_{c,n}$ is sharp, that is, the largest possible such constant. Moreover, we discuss upper and lower bounds for $\gamma_{c,n} > 0$ and develop a numerical scheme for approximating $\gamma_{c,n}$. 

This quadratic form inequality will be a consequence of the fact    
\[ 
\overline{\big[- \Delta + \gamma (u, x) |x|^{-3}\big]\big|_{C_0^{\infty}(\bbR^n \backslash \{0\})}} \geq 0 
\, \text{ if and only if } \, 0 \leq \gamma \leq \gamma_{c,n} 
\]
in $L^2(\bbR^n)$ (with $\overline{T}$ the operator closure of the linear operator $T$).  

We also consider the case of multicenter dipole interactions with dipoles centered on an infinite discrete set. 
\end{abstract}

\maketitle

{\scriptsize{\tableofcontents}}
\normalsize

\section{Introduction} \lb{s1}

The celebrated (multi-dimensional) Hardy inequality,
\begin{align}
\begin{split} 
\int_{\bbR^n} d^n x \, |(\nabla f)(x)|^2 \geq [(n-2)/2]^2 \int_{\bbR^n} d^n x \, |x|^{-2} |f(x)|^2,& \\ 
f \in D^1(\bbR^n), \; n \in \bbN, \; n \geq 3,&   \lb{1.1}
\end{split} 
\end{align}
the first in an infinite sequence of higher-order Birman--Hardy--Rellich-type inequalities, received enormous attention in the literature due to its ubiquity in self-adjointness and spectral theory problems associated with 
second-order differential operators with strongly singular coefficients, see, for instance, \cite{AGG06},  
\cite[Ch.~1]{BEL15}, \cite[Sect.~1.5]{Da89}, \cite[Ch.~5]{Da95}, \cite{Ge84}, \cite{GP80}, \cite{GM08}, \cite{GM11},  \cite[Part~1]{GM13}, \cite{Ka72}--\cite{KW72}, \cite[Ch.~2, Sect.~21]{OK90}, \cite[Ch.~2]{RS19}, \cite{Si83} and the extensive literature cited therein. We also note that inequality \eqref{1.1} is closely related to Heisenberg's uncertainty relation as discussed in \cite{Fa78}. 

The basics behind the (point) dipole Hamiltonian $- \Delta + V_{\gamma}(x)$, with potential 
\begin{equation}
V_{\gamma}(x) = \gamma \f{(u, x)}{|x|^3}, \quad x \in \bbR^n \backslash \{0\}, \; \gamma \in [0,\infty), 
\; u \in \bbR^n, \; |u| = 1, \; n \geq 3      \lb{1.2} 
\end{equation} 
(with $(a,b)$ denoting the Euclidean scalar product of $a, b \in \bbR^n$), in the physically relevant case $n=3$, have been discussed in great detail in the 1980 paper by Hunziker and G\"unther \cite{HG80}. In particular, these authors point out some of the existing fallacies to be found in the physics literature in connection with dipole potentials and their ability to bind electrons. The primary goal in this paper has been the attempt to extend the three-dimensional results on dipole potentials in \cite{HG80} to the general case $n \geq 4$ and thereby rederiving and complementing some of the results obtained by Felli, Marchini, and Terracini \cite{FMT08}, \cite{FMT09} (see also \cite{FMT07}, \cite{FMO20}, \cite{Te96}). While Felli, Marchini, and Terracini primarily rely on variational techniques, we will focus more on an operator and spectral theoretic approach. To facilitate a comparison between the existing literature on this topic and the results presented in the present paper, we next summarize some of the principal achievements in  \cite{FMT07}, \cite{FMT08}, \cite{FMT09}, \cite{HG80}, \cite{Te96}.

However, we first emphasize that these sources also discuss a number of facts that go beyond the scope of our paper: For instance, Hunziker and G\"unther \cite{HG80} also consider non-binding criteria for Hamiltonians with $M$ point charges and applications to electronic spectra of an $N$-electron Hamiltonian in the presence of $M$ point charges (nuclei). In addition, Felli, Marchini and Terracini \cite{FMT08}, \cite{FMT09} discuss more general operators where the point dipole potential $V_{\gamma}$ in \eqref{1.2} is replaced by\footnote{For simplicity of notation, we will omit the standard surface measure $d^{n-1}\omega$ in $L^{2}(\bbS^{n-1})$, and similarly, the Lebesgue measure $d^nx$ in $L^2(\bbR^n)$.} 
\begin{equation}
a(x/|x|) |x|^{-2}, \; x \in \bbR^n \backslash \{0\}, \quad a \in L^{\infty}(\bbS^{n-1}),     \lb{1.3} 
\end{equation}
and hence \eqref{1.2} represents the special case 
\begin{equation}
a_{\gamma}(x/|x|) = \gamma (u,x/|x|), \quad \gamma \in [0,\infty), \; u \in \bbR^n, \, |u|=1, \; x \in \bbR^n \backslash \{0\}. 
\end{equation}
These authors also provide a discussion of strict positivity of the underlying quadratic form $Q_{\{\gamma_j\}_{1 \leq j \leq M}}(\dott,\dott)$, $M \in \bbN$, in the multi-center case,
\begin{align}
\begin{split}
Q_{\{\gamma_j\}_{1 \leq j \leq M}}(f,f) = \int_{\bbR^n} d^n x \, \bigg[|(\nabla f)(x)|^2 
+ \sum_{j=1}^M \gamma_j \f{(u,(x-x_j))}{|x-x_j|^3} |f(x)|^2\bigg],&     \\
\gamma_j \in [0,\infty), \; x_j \in \bbR^n, \, x_j \neq x_k \text{ for } j \neq k, \, 1 \leq j,k \leq M, \; f \in D^1(\bbR^n),&    
\end{split} 
\end{align}
and its analog with $\gamma_j (u,(x-x_j)/|x-x_j|)$ replaced by $a_{\gamma_j}(\dott)$ restricted to suitable neighborhoods of $x_j$, $1 \leq j \leq M$. In this context also the problem of ``localization of binding'', a notion going back to Ovchinnikov and Sigal \cite{OS79}, is discussed in \cite{FMT09}. In addition, applications to a class of nonlinear PDEs are discussed in 
\cite{Te96}.

Turning to the topics directly treated in this paper and their relation to results in  \cite{FMT07}, \cite{FMT08}, 
\cite{FMT09}, \cite{HG80}, \cite{Te96}, we start by noting that the dipole-modified Hardy-type inequality reads as follows: For each $n \geq 3$, there exists a critical dipole coupling constant $\gamma_{c,n} > 0$, such that 
\begin{align} 
\begin{split} 
&\text{for all $\gamma \in [0,\gamma_{c,n}]$, and all $u \in \bbR^n$, $|u|=1$,} \\
&\quad \int_{\bbR^n} d^n x \, |(\nabla f)(x)|^2 \geq \pm \gamma \int_{\bbR^n} d^n x \, (u, x) |x|^{-3} |f(x)|^2, 
\quad f \in D^1(\bbR^n).      \lb{1.7} 
\end{split} 
\end{align}  
Here $\gamma_{c,n} > 0$ is optimal, that is, the largest possible such constant, and we recall that $D^1(\bbR^n)$ denotes the completion of $C_0^{\infty}(\bbR^n)$ with respect to the norm $\big( \int_{\bbR^n} d^n x \, |(\nabla g)(x)|^2\big)^{1/2}$, $g \in C_0^{\infty}(\bbR^n)$. 

The critical constant $\gamma_{c,n}$ can be characterized by the Rayleigh quotient
\begin{align}
\gamma_{c,n}^{-1} &= - \underset{f\in D^{1}(\bbR^{n})\setminus\{0\}}\sup\,
\left\{\f{\int_{\bbR^n} \,d^n x \, (u,x)|x|^{-3}|f(x)|^{2}}{\int_{\bbR^n} d^n x \, |\nabla f(x)|^{2}}\right\}    \lb{1.8} \\
&= \underset{\varphi \in H_{0}^{1}((0,\pi))\backslash\{0\}}{\sup} \Bigg\{\int_{0}^{\pi} d\theta_{n-1} \, 
[- \cos(\theta_{n-1})] |\varphi(\theta_{n-1})|^{2}     \no \\
& \quad \times \bigg[\int_{0}^{\pi} d\theta_{n-1} \, |\varphi'(\theta_{n-1})|^{2}+[(n-2)(n-4)/4][\sin(\theta_{n-1})]^{-2} 
|\varphi(\theta_{n-1})|^{2}\bigg]^{-1}\Bigg\},      \no\\
&\hspace*{9.6cm}  n\geq 3,        \lb{1.9}
\end{align} 
see \cite{FMT08}. To obtain \eqref{1.9} one introduces polar coordinates, 
\begin{equation}
x = r \omega, \quad  r=|x| \in (0,\infty), \quad \omega = \omega(\theta_1,\dots,\theta_{n-1}) = x/|x| \in \bbS^{n-1},     \lb{1.10} 
\end{equation} 
as in \eqref{A.1}--\eqref{A.4}. By \eqref{1.1}, clearly,
\begin{equation}
 (n-2)^2/4 \leq \gamma_{c,n}, \quad n \geq 3.     \lb{1.11}
\end{equation}
The existence of $\gamma_{c,n}$ as a finite positive number is shown for $n=3$ in \cite{HG80} and for $n \in \bbN$, 
$n \geq 3$, in \cite{FMT08}. 

Next, one decomposes 
\begin{align}
L_{\gamma} &=- \Delta+V_{\gamma}(x), \quad x \in \bbR^n\backslash\{0\},          \lb{1.12} \\
&= \bigg[- \f{d^{2}}{dr^{2}} - \f{n-1}{r}\f{d}{dr}\bigg]\otimes I_{L^{2}(\bbS^{n-1})} 
+ \f{1}{r^{2}}\otimes \Lambda_{\gamma,n}, \quad r\in (0,\infty), \; \gamma \geq 0, \; n \geq 3,  \no 
\end{align}
acting in $L^{2}((0,\infty);r^{n-1}dr)\otimes L^{2}(\bbS^{n-1})$, where
\begin{align}
\Lambda_{\gamma,n} = - \Delta_{\bbS^{n-1}} + \gamma\cos(\theta_{n-1}), \quad 
\dom(\Lambda_{\gamma,n}) = \dom(- \Delta_{\bbS^{n-1}}), \quad \gamma \geq 0, \; n \geq 3,     \lb{1.13}
\end{align}
with $- \Delta_{\bbS^{n-1}}$ the Laplace--Beltrami operator in $L^2(\bbS^{n-1})$ (see Appendix \ref{sA}). It is shown in 
\cite{FMT08} that $\gamma_{c,n}$ is also characterized by 
\begin{equation}
\lambda_{\gamma_{c,n}, n, 0} = - (n-2)^2/4, \quad n \geq 3,    \lb{1.14}
\end{equation}
where $\lambda_{\gamma, n, 0}$ denotes the lowest eigenvalue of $\Lambda_{\gamma,n}$. We rederive \eqref{1.14} via ODE methods and also prove in Theorem \ref{t3.1}, that 
\begin{equation}
\f{d \lambda_{\gamma, n, 0}}{d \gamma} \leq \f{\lambda_{\gamma, n, 0}}{\gamma} < 0, \quad \gamma > 0, \; n \geq 3     \lb{1.15}
\end{equation}
(this extends the $n=3$ result in \cite{HG80} to $n \in \bbN$, $n \geq 3$) and also provide the two-sided bounds 
\begin{equation}
- \f{\gamma^2}{(n-1)^2} \leq \lambda_{\gamma, n, 0} 
\leq - \f{\gamma}{2} \f{I_{n/2}(2\gamma/(n-1))}{I_{(n-2)/2}(2\gamma/(n-1))} < 0, \quad \gamma > 0, \; n \geq 3,   \lb{1.16} 
\end{equation}
with $I_{\nu}(\dott)$ the regular modified Bessel function of order $\nu \in \bbC$. (The additional lower bound 
$- \gamma \leq \lambda_{\gamma, n, 0}$ is of course evident since $-1 \leq \cos(\theta_{n-1})$.) Moreover, employing 
the fact that
\begin{equation}
- y'' (x) + \big\{\big[s^2 - (1/4)\big] \big/\sin^2(x)\big\} y(x) = z y(x), \quad z \in \bbC, \; s \in [0,\infty), \; x \in (0,\pi),     \lb{1.17} 
\end{equation}
is exactly solvable in terms of hypergeometric functions leads to inequality \eqref{3.41b}, and combining the latter with 
\eqref{1.9} enables us to prove the existence of $C_0 \in (0,\infty)$ such that 
\begin{equation}
\gamma_{c,n} \underset{n \to \infty}{=} C_0 (n-2)(n-4) [1 + \oh(1)],     \lb{1.18} 
\end{equation}
and the two-sided bounds 
\begin{equation}
15 \pi [(n-2)(n-4) + 4]/32 \geq \gamma_{c,n} \geq \begin{cases}
1/4, & n=3, \\
1, & n=4, \\
3^{3/2} [(n-2)(n-4) + 1]/8, & n \geq 5,
\end{cases}      \lb{1.19} 
\end{equation}
in Theorem \ref{t3.3}. 

Briefly turning to the content of each section, Section \ref{s2} offers a detailed treatment of the angular momentum decomposition of $H_{\gamma}$, the self-adjoint realization of $L_{\gamma} =- \Delta+V_{\gamma}(\dott)$ \big(i.e., the Friedrichs extension of $L_{\gamma}|_{C_0^{\infty}(\bbR^n\backslash\{0\})}$\big) in $L^2(\bbR^n)$, and hence together with Appendix \ref{sA} on spherical harmonics and the Laplace--Beltrami operator in $L^2(S^{n-1})$, $n \geq 2$, provides the background information for the bulk of this paper. Equations \eqref{1.15}, \eqref{1.16}, \eqref{1.18}, and \eqref{1.19} represent our principal new results in Section \ref{s3}. Section \ref{s4} develops a numerical approach to $\gamma_{c,n}$ that exhibits $- \gamma_{c,n}^{-1}$ as the smallest (negative) eigenvalue of a particular triangular operator $K(\gamma_{c,n})$ in $\ell^2(\bbN_0)$ with vanishing diagonal elements (cf.\ \eqref{4.19}, \eqref{4.21a}). In addition, we prove that finite truncations of $K(\gamma_{c,n})$ yield a convergent and efficient approximation scheme for $\gamma_{c,n}$. Finally, Section \ref{s5} considers the extension to multicenter dipole Hamiltonians of the form
\begin{equation}
L_{\{\gamma_j\}_{j \in J}} = - \Delta + \sum_{j \in J} \gamma_j \f{(u,(x-x_j))}{|x-x_j|^3} \chi_{B_n(x_j;\varepsilon/4)}(x) + W_0(x),
\end{equation}
where $J \subseteq \bbN$ is an index set, $\varepsilon > 0$, $\chi_{B_n(x_0;\eta)}$ denotes the characteristic function of the open ball $B_n(x_0;\eta) \subset \bbR^n$ with center $x_0 \in \bbR^n$ and radius $\eta > 0$, $\{x_j\}_{j \in J} \subset \bbR^n$, $n \in \bbN$, $n \geq 3$, and 
\begin{align}
\begin{split} 
& \inf_{j, j' \in J} |x_j - x_{j'}| \geq \varepsilon, \quad 0 \leq \gamma_j \leq \gamma_0 < \gamma_{c,n}, \; j \in J,   \\
&\, W_0 \in L^{\infty}(\bbR^n), \, \text{ $W_0$ real-valued a.e.~on $\bbR^n$.}
\end{split}
\end{align}
In particular, $\{x_j\}_{j \in J}$ is permitted to be an infinite, discrete set, for instance, a lattice. Relying on results proven in \cite{GMNT16}, we derive the optimal result that 
$L_{\{\gamma_j\}_{j \in J}}|_{C_0^{\infty}(\bbR^n\backslash \{x_j\}_{j \in J})}$ is bounded from below (resp., essentially self-adjoint) if each individual $L_{\gamma_j}|_{C_0^{\infty}(\bbR^n\backslash \{0\})}$, $j \in J$, is bounded from below (resp., essentially self-adjoint). This extends results in \cite{FMT09}, where $J$ is assumed to be finite.

\section{The Dipole Hamiltonian} \lb{s2}

In this section we provide a discussion of the angular momentum decomposition of the $n$-dimensional Laplacian $- \Delta$, introduce the dipole Hamiltonian $H_{\gamma}$, the principal object of this paper, and discuss an analogous decomposition of the latter. 

In spherical coordinates \eqref{A.1}, the Laplace differential expression in $n$ dimensions takes the form
\begin{align}
- \Delta = - \f{\p^{2}}{\p r^{2}}-\f{n-1}{r}\f{\p}{\p r}-\f{1}{r^{2}} \Delta_{\bbS^{n-1}}
	\lb{2.1}
\end{align}
where $-  \Delta_{\bbS^{n-1}}$ denotes the Laplace--Beltrami operator\footnote{We will call $-  \Delta$ the Laplacian to guarantee nonnegativity of the underlying $L^2(\bbR^n)$-realization (and analogously for the $L^2(\bbS^{n-1})$-realization of the Laplace--Beltrami operator 
$-  \Delta_{\bbS^{n-1}}$).} associated with the $(n-1)$-dimensional unit sphere 
$\bbS^{n-1}$ in $\bbR^n$, see \eqref{A.16}. When acting in $L^{2}(\bbR^n)$, which in spherical coordinates can be written as 
$L^{2}(\bbR^n) \simeq L^{2}((0,\infty);r^{n-1}dr)\otimes L^{2}(\bbS^{n-1})$, \eqref{2.1}  becomes 
\begin{align}
- \Delta = \bigg[- \f{d^{2}}{dr^{2}} - \f{n-1}{r}\f{d}{dr}\bigg]\otimes I_{L^{2}(\bbS^{n-1})} - \f{1}{r^{2}}\otimes  \Delta_{\bbS^{n-1}}      \lb{2.2}
\end{align}
(with $I_{\cX}$ denoting the identity operator in $\cX$).  
The Laplace--Beltrami operator $-  \Delta_{\bbS^{n-1}}$ in $L^{2}(\bbS^{n-1})$, with domain $\dom(- \Delta_{\bbS^{n-1}}) = H^2\big(\bbS^{n-1}\big)$ (cf., e.g., \cite{BLP19}),  is known to be essentially self-adjoint and nonnegative on $C_0^{\infty}(\bbS^{n-1})$ (cf.\ \cite[Theorem~5.2.3]{Da89}). Recalling the  treatment in \cite[p.~160--161]{RS75}, one decomposes the space $L^{2}(\bbS^{n-1})$ into an infinite orthogonal sum, yielding
\begin{align}
\begin{split} 
L^{2}(\bbR^n) &\simeq L^{2}((0,\infty);r^{n-1}dr)\otimes L^{2}(\bbS^{n-1})    \\
& = \bigoplus\limits_{\ell=0}^{\infty}L^{2}((0,\infty);r^{n-1}dr)\otimes\cY_{\ell}^{n},   \lb{2.3}
\end{split} 
\end{align}
where $\cY_{\ell}^{n}$ is the eigenspace of $- \Delta_{\bbS^{n-1}}$ corresponding to the eigenvalue $\ell(\ell+n-2)$, $\ell \in \bbN_0$, as 
\begin{equation}
\sigma (- \Delta_{\bbS^{n-1}}) = \{\ell(\ell+n-2)\}_{\ell \in \bbN_0}.     \lb{2.4} 
\end{equation}
In particular, this results in 
\begin{align}
- \Delta = \bigoplus\limits_{\ell=0}^{\infty} \left[-\f{d^{2}}{dr^{2}}-\f{n-1}{r}\f{d}{dr}
+ \f{\ell(\ell+n-2)}{r^{2}}\right] \otimes I_{\cY_{\ell}^{n}},       \lb{2.5}
\end{align}
in the space \eqref{2.3}.

To simplify matters, replacing the measure $r^{n-1}dr$ by $dr$ and simultaneously removing the term 
$(n-1)r^{-1} (d/dr)$, one introduces the unitary operator
\begin{align}
U_{n}=
\begin{cases}
L^{2}((0,\infty);r^{n-1}dr)\rightarrow L^{2}((0,\infty);dr),  \\[1mm] 
f(r)\mapsto r^{(n-1)/2}f(r),
\end{cases}
	\lb{2.6}
\end{align}
under which \eqref{2.5} becomes
\begin{align}
- \Delta = \bigoplus\limits_{\ell=0}^{\infty} U_{n}^{-1}\left[-\f{d^{2}}{dr^{2}} + 
\f{[(n-1)(n-3)/4] + \ell(\ell+n-2)}{r^{2}}\right] U_{n} \otimes I_{\cY_{\ell}^{n}}
	\lb{2.7}
\end{align}
acting in the space \eqref{2.3}. The precise self-adjoint $L^2$-realization of $- \Delta$ in the space 
\eqref{2.3} then is of the form
\begin{equation}
H_0 = \bigoplus\limits_{\ell=0}^{\infty} U_{n}^{-1} h_{n,\ell} \,U_{n} \otimes I_{\cY_{\ell}^{n}},
	\lb{2.8}
\end{equation}
where $h_{n,\ell}$, $\ell \in \bbN_0$, represents the Friedrichs extension of 
\begin{align}
\left[-\f{d^{2}}{dr^{2}} + 
\f{[(n-1)(n-3)/4] + \ell(\ell+n-2)}{r^{2}}\right]\bigg|_{C_0^{\infty}((0,\infty))}, 
\quad \ell \in \bbN_0,        \lb{2.9}
\end{align}
in $L^2((0,\infty); dr)$. For explicit operator domains and boundary conditions (the latter for $n=2,3$ only) we refer to 
\eqref{2.37}--\eqref{2.40}. It is well-known (cf.\ \cite[Sect.~IX.7, Appendix to X.1]{RS75}) that 
\begin{align}
& H_0 = - \Delta, \quad \dom(H_0) = H^2(\bbR^n),  \lb{2.13} \\
& H_0|_{C_0^{\infty}(\bbR^n)} \, \text{ is essentially self-adjoint,}     \lb{2.14}\\
& H_0|_{C_0^{\infty}(\bbR^n\backslash \{0\})} \, \text{ is essentially self-adjoint if and only if 
$n \geq 4$.}\lb{2.15}
\end{align}

Next, we turn to the dipole potential
\begin{align}
V_{\gamma}(x)=\gamma\f{(u,x)}{|x|^{3}}, \quad x\in\bbR^n \backslash \{0\}, \; \gamma \geq 0, \; n \geq 2, 
	\lb{2.16}
\end{align}
where $u \in \bbR^n$ is a unit vector in the direction of the dipole, the strength of the dipole equals 
$\gamma \geq 0$, and $(\, \cdot \,, \, \cdot \,)$ represents the Euclidean scalar product in $\bbR^n$. Upon an appropriate rotation, one can always choose the coordinate system in such a manner that 
$(u,x) = |x| \cos(\theta_{n-1})$, implying 
\begin{align}
V_{\gamma}(x)=\gamma\f{\cos(\theta_{n-1})}{|x|^{2}}, \quad x\in\bbR^n \backslash \{0\}, \; 
\gamma \geq 0, \; n \geq 2.
	\lb{2.17}
\end{align}
In the following we primarily restrict ourselves to the case $n \geq 3$ and comment on the exceptional case $n=2$ at the end of Section \ref{s3}. 
The differential expression associated with Hamiltonian for this system then becomes
\begin{align}
L_{\gamma}=- \Delta+V_{\gamma}(x), \quad x \in \bbR^n\backslash\{0\}, \; \gamma \geq 0, \; 
n \geq 3,   \lb{2.18}
\end{align}
acting in $L^{2}(\bbR^n)$. In analogy to \eqref{2.2}, \eqref{2.18} can be represented as
\begin{align}
L_{\gamma}= \bigg[- \f{d^{2}}{dr^{2}} - \f{n-1}{r}\f{d}{dr}\bigg]\otimes I_{L^{2}(\bbS^{n-1})} 
+ \f{1}{r^{2}}\otimes \Lambda_{\gamma,n}, \quad \gamma \geq 0, \; n \geq 3, 
	\lb{2.19}
\end{align}
acting in $L^{2}((0,\infty);r^{n-1}dr)\otimes L^{2}(\bbS^{n-1})$, where
\begin{align}
\Lambda_{\gamma,n} = - \Delta_{\bbS^{n-1}} + \gamma\cos(\theta_{n-1}), \quad 
\dom(\Lambda_{\gamma,n}) = \dom(- \Delta_{\bbS^{n-1}}), \quad \gamma \geq 0, \; n \geq 3, 
	\lb{2.20}
\end{align}
is self-adjoint in $L^{2}(\bbS^{n-1})$ (since $\gamma\cos(\theta_{n-1})$ is a bounded self-adjoint operator in $L^{2}(\bbS^{n-1})$). Applying the angular momentum decomposition to 
$L_{\gamma}$, but this time with respect to the eigenspaces of $\Lambda_{\gamma,n}$, then results in 
\begin{align}\begin{split} 
L^2(\bbR^n) &= L^2((0,\infty);r^{n-1}\,dr) \otimes L^2(\bbS^{n-1})    \\
& = \bigoplus\limits_{\ell=0}^{\infty}L^{2}((0,\infty);r^{n-1}\,dr)\otimes \cY_{\gamma,\ell}^{n}, 
\quad n \geq 3,  \lb{2.21}
\end{split} 
\end{align}
where $ \cY_{\gamma,\ell}^{n}$ represents the eigenspace of $\Lambda_{\gamma,n}$ corresponding to the eigenvalue $\lambda_{\gamma,n,\ell}$, as 
\begin{equation}
\sigma (\Lambda_{\gamma,n}) = \{\lambda_{\gamma,n,\ell}\}_{\ell \in \bbN_0}. 
	\lb{2.22}
\end{equation}
We will order the eigenvalues of $\Lambda_{\gamma,n}$ according to magnitude, that is,
\begin{equation}
\lambda_{\gamma,n,\ell} \leq \lambda_{\gamma,n,\ell + 1}, \quad \gamma \geq 0, \; 
\ell \in \bbN_0, \; n \geq 3,      \lb{2.23}
\end{equation}
repeating them according to their multiplicity. 
The analog of \eqref{2.7} in the space \eqref{2.21} then becomes 
\begin{align}
L_{\gamma}=\bigoplus\limits_{\ell=0}^{\infty}U_{n}^{-1} \left[-\f{d^{2}}{dr^{2}} 
+ \f{[(n-1)(n-3)/4] + \lambda_{\gamma,n,\ell}}{r^{2}} \right] U_{n} 
\otimes I_{ \cY_{\gamma,\ell}^{n}}, \quad n \geq 3.       \lb{2.24}
\end{align}

\begin{remark} \lb{r2.1} 
Since $e^{- t (-\Delta_{\bbS^{n-1}})}$, $t \geq 0$, has a continuous and nonnegative integral kernel (see, e,g., \cite[Theorem~5.2.1]{Da89}), it is positivity improving in $L^2(\bbS^{n-1})$. Hence, so is $e^{- t \Lambda_{\gamma,n}}$, $t \geq 0$, by (a special case of) \cite[Theorem~XIII.45]{RS78}. Thus, by \cite[Theorem~XIII.44]{RS78} one concludes that 
\begin{equation} 
\text{the lowest eigenvalue $\lambda_{\gamma,n,0}$ of $\Lambda_{\gamma,n}$ is simple for all $\gamma \geq 0$.}
	\lb{2.25}
\end{equation} 
${}$ \hfill $\diamond$  
\end{remark}

In order to deal exclusively with operators which are bounded from below we now make the the following assumption.

\begin{hypothesis} \lb{h2.2}
Suppose that $n \in \bbN$, $n \geq 3$, and $\gamma \geq 0$ are such that 
\begin{equation}
\lambda_{\gamma,n,0} \geq - (n-2)^2/4.    \lb{2.26}
\end{equation}
\end{hypothesis} 

Inequality \eqref{2.26} is inspired by Hardy's inequality \eqref{1.1} (cf.\ \cite[Sect.~1.2]{BEL15}, \cite[p.~345]{Ka95}, \cite[Ch.~3]{KMP07}, \cite[Ch.~1]{KPS17}, \cite[Ch.~1]{OK90}), which in turn implies
\begin{equation}
\bigg[- \f{d^2}{dr^2} + \f{c}{r^2}\bigg]\bigg|_{C_0^{\infty}((0,\infty))} \geq 0 \, \text{ if and only if 
$c \geq - 1/4$.}    \lb{2.27}
\end{equation}
In fact, ``$ \geq 0$'' in \eqref{2.27} can be replaced by ``bounded from below''. Assumption \eqref{2.26} is equivalent to 
\begin{equation}
[(n-1)(n-3)/4] + \lambda_{\gamma,n,0} \geq - 1/4.
	\lb{2.28}
\end{equation}

\begin{remark} \lb{r2.3} 
Since the perturbation $\gamma \cos(\theta_{n-1})$, $\gamma \in [0,\infty)$, of $-  \Delta_{\bbS^{n-1}}$ in \eqref{2.20} is bounded from below and from above,
\begin{equation}
- \gamma I_{L^2(\bbS^{n-1})} \leq \gamma \cos(\theta_{n-1}) \leq \gamma I_{L^2(\bbS^{n-1})}, 
	\lb{2.29}
\end{equation} 
and $-  \Delta_{\bbS^{n-1}} \geq 0$, it is clear that 
\begin{equation}
\lambda_{\gamma,n,0} \geq - \gamma, \, \text{ that is, } \, \Lambda_{\gamma,n} \geq - \gamma 
I_{L^2(\bbS^{n-1})},     \lb{2.30} 
\end{equation}
and $\lambda_{0,n,0} = 0$. 
In particular, for $n \geq 3$ and $0 \leq \gamma$ sufficiently small, Hypothesis \ref{h2.2} will be satisfied. We are particularly interested in the existence of a critical $\gamma_{c,n} > 0$ such that 
\begin{equation}
\lambda_{\gamma_{c,n},n,0} = - (n-2)^2/4,     \lb{2.31} 
\end{equation}
and whether or not 
\begin{equation}
\lambda_{\gamma,n,0} < - (n-2)^2/4, \quad \gamma \in (\gamma_{c,n}, \gamma_2), 
	\lb{2.32}
\end{equation}
for a $\gamma_2 \in (\gamma_{c,n}, \infty)$, with 
\begin{equation}
\lambda_{\gamma,n,0} \geq - (n-2)^2/4, \quad \gamma \in (\gamma_2, \gamma_3), 
	\lb{2.33}
\end{equation} 
for a $\gamma_3 \in (\gamma_2, \infty)$, etc. This will be clarified in the next section (demonstrating that 
$\gamma_2 = \infty$). \hfill $\diamond$
\end{remark}

Given Hypothesis \ref{h2.2}, the precise self-adjoint $L^2(\bbR^n)$-realization of $L_{\gamma}$ in the space 
\eqref{2.21} is then of the form 
\begin{equation}
H_{\gamma} = \bigoplus\limits_{\ell=0}^{\infty} U_{n}^{-1} h_{\gamma,n,\ell} \, 
U_{n} \otimes I_{ \cY_{\gamma,\ell}^{n}}, \quad \gamma \geq 0, \; n \geq 3, 
	\lb{2.34}
\end{equation}
where $h_{\gamma,n,\ell}$, $\ell \in \bbN_0$, represents the Friedrichs extension of 
\begin{align}
\left[-\f{d^{2}}{dr^{2}} + 
\f{[(n-1)(n-3)/4] + \lambda_{\gamma,n,\ell}}{r^{2}}\right]\bigg|_{C_0^{\infty}((0,\infty))}, 
\quad r > 0, \; \gamma \geq 0, \; n \geq 3, \; \ell \in \bbN_0,      \lb{2.35}
\end{align}
in $L^2((0,\infty); dr)$. Explicitly, as discussed, for instance, in \cite{GLN20}, \cite{GP79}, the Friedrichs extension of 
$h_{\gamma,n,\ell}$, $\ell \in \bbN_0$, can be determined from the fact that  the Friedrichs extension 
$h_{\alpha,F}$ in $L^2((0,\infty); dr)$ of 
\begin{equation}
h_{\alpha} = \bigg[- \f{d^2}{dr^2} + \f{\alpha^2 -(1/4)}{r^2}\bigg]\bigg|_{C_0^{\infty}((0,\infty))}, 
\quad r> 0, \; \alpha \in [0,\infty), 
	\lb{2.36}
\end{equation}
is given by 
\begin{align}
&h_{\alpha,F} = - \f{d^2}{dr^2} + \f{\alpha^2 -(1/4)}{r^2}, \quad r> 0, \; \alpha \in [0,\infty),   \lb{2.37} \\
& \dom\big(h_{\alpha,F}\big) = \big\{f \in L^2((0,\infty); dr) \,\big|\, f, f' \in AC_{loc}((0,\infty)); \, 
\wti f_{\alpha}(0) = 0;    \lb{2.38} \\
& \hspace*{2.5cm}  (-f'' + \big[\alpha^2 - (1/4)\big]r^{-2}f) \in L^2((0,\infty); dr)\big\}, 
\quad \alpha \in [0,1),     \no \\
& \dom\big(h_{\alpha,F}\big) = \big\{f \in L^2((0,\infty); dr) \,\big|\, f, f' \in 
AC_{loc}((0,\infty));       \lb{2.39} \\ 
& \hspace*{2.5cm} (-f'' +\big[\alpha^2 - (1/4)\big]r^{-2}f) \in L^2((0,\infty); dr)\big\}, \quad \alpha \in [1,\infty),     \no 
\end{align}
where
\begin{equation}
\wti f_{\alpha}(0) = \begin{cases} \lim_{r \downarrow 0} 
\big[r^{1/2} \ln(1/r)\big]^{-1} f(r), & \alpha =0, \\[1mm] 
\lim_{r \downarrow 0} 2 \alpha r^{\alpha - (1/2)}f(r), & \alpha \in (0,1).  
\end{cases}      \lb{2.40}
\end{equation}

Next we note the following fact.

\begin{lemma} \lb{l2.4}
Given the operator $\Lambda_{\gamma,n}$, $\gamma \geq 0$, in $L^{2}(\bbS^{n-1})$ as introduced in 
\eqref{2.20}, one infers that 
\begin{equation} 
\lim_{\gamma\downarrow 0} \lambda_{\gamma,n,\ell} = \ell(\ell+n-2), \quad \ell \in \bbN_0,
	\lb{2.41}
\end{equation} 
recalling that $\{\ell(\ell+n-2)\}_{\ell \in \bbN_0}$ are the corresponding eigenvalues of the unperturbed operator, $\Lambda_{0,n} = - \Delta_{\bbS^{n-1}}$, the Laplace--Beltrami operator $($cf.\ \eqref{2.4}$)$.  
\end{lemma}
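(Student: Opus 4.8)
The plan is to use analytic (or norm-resolvent) perturbation theory for the family $\Lambda_{\gamma,n} = -\Delta_{\bbS^{n-1}} + \gamma\cos(\theta_{n-1})$, $\gamma \geq 0$. The key structural fact is that $\cos(\theta_{n-1})$ is a bounded self-adjoint operator on $L^2(\bbS^{n-1})$ with $\|\cos(\theta_{n-1})\|_{\cB(L^2(\bbS^{n-1}))} \leq 1$, so $\{\Lambda_{\gamma,n}\}_{\gamma \geq 0}$ is a (self-adjoint) holomorphic family of type (A) in the sense of Kato: the domain $\dom(\Lambda_{\gamma,n}) = \dom(-\Delta_{\bbS^{n-1}}) = H^2(\bbS^{n-1})$ is $\gamma$-independent, and for each fixed $f \in H^2(\bbS^{n-1})$ the map $\gamma \mapsto \Lambda_{\gamma,n} f = -\Delta_{\bbS^{n-1}} f + \gamma \cos(\theta_{n-1}) f$ is (affine, hence) entire in $\gamma$. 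First I would record this and invoke the standard consequence (e.g.\ Kato, \emph{Perturbation Theory for Linear Operators}, Ch.\ VII, or \cite[Theorem~XII.9]{RS78}) that the family is norm-resolvent continuous in $\gamma$; in fact $\|(\Lambda_{\gamma,n} - z)^{-1} - (\Lambda_{0,n}-z)^{-1}\| \to 0$ as $\gamma \downarrow 0$ for any $z$ in the common resolvent set, by the second resolvent identity together with $\|\gamma\cos(\theta_{n-1})\| \leq \gamma \to 0$.

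Next I would invoke stability of the spectrum under norm-resolvent convergence: since $\Lambda_{0,n} = -\Delta_{\bbS^{n-1}}$ has purely discrete spectrum $\{\ell(\ell+n-2)\}_{\ell \in \bbN_0}$ (cf.\ \eqref{2.4}), each eigenvalue being isolated with finite multiplicity, the spectral projections converge in norm and hence the eigenvalues of $\Lambda_{\gamma,n}$ lying near $\ell(\ell+n-2)$ converge, with multiplicity, to $\ell(\ell+n-2)$ as $\gamma \downarrow 0$. Since we have fixed the enumeration convention \eqref{2.23} — ordering $\lambda_{\gamma,n,\ell}$ by magnitude and repeating according to multiplicity — and since for $\gamma$ small the eigenvalues of $\Lambda_{\gamma,n}$ cluster in disjoint neighborhoods of the distinct values $\ell(\ell+n-2)$, the labeling is consistent across small $\gamma$ and one concludes $\lim_{\gamma \downarrow 0}\lambda_{\gamma,n,\ell} = \ell(\ell+n-2)$ for every $\ell \in \bbN_0$.

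Alternatively, and perhaps more elementarily, one can argue directly from the min-max principle: for the $k$-th eigenvalue counted with multiplicity one has $\big|\lambda_{\gamma,n,k} - \lambda_{0,n,k}\big| \leq \|\gamma\cos(\theta_{n-1})\|_{\cB(L^2(\bbS^{n-1}))} \leq \gamma$, because adding a bounded self-adjoint perturbation of norm $\leq\gamma$ shifts every min-max value by at most $\gamma$. Letting $\gamma \downarrow 0$ gives \eqref{2.41} at once, and as a bonus reproduces the bound $\lambda_{\gamma,n,0} \geq -\gamma$ already noted in \eqref{2.30}. I would likely present this min-max argument as the main proof since it is self-contained and quantitative, mentioning the holomorphic-family viewpoint as the conceptual backdrop. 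The only mild subtlety — hardly an obstacle — is bookkeeping the enumeration: one must make sure that the $k$-th min-max value of $\Lambda_{\gamma,n}$ is being compared with the $k$-th min-max value of $\Lambda_{0,n}$ under the \emph{same} ordering convention, which is immediate since both are defined by the same min-max expression over subspaces of the fixed form domain $H^1(\bbS^{n-1})$.
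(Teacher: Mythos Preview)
Your proposal is correct. The paper's own proof is a one-line citation of Rellich's theorem as recorded in \cite[Theorems~XII.3 and XII.13]{RS78}, which is precisely your first approach: recognize $\{\Lambda_{\gamma,n}\}_{\gamma}$ as a self-adjoint holomorphic family of type~(A) (the paper makes this explicit later, in the proof of Theorem~\ref{t3.1}) and invoke continuity of the eigenvalues. Your min-max alternative, giving the uniform bound $|\lambda_{\gamma,n,k}-\lambda_{0,n,k}|\leq\gamma$, is more elementary and quantitative than what the paper records here; it is not used in the paper's proof of the lemma, though the special case $\lambda_{\gamma,n,0}\geq -\gamma$ does appear separately as \eqref{2.30}.
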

\begin{proof}
This is a special case of Rellich's theorem in the form recorded, for instance, in 
\cite[Theorems~XII.3 and XII.13]{RS78}.
\end{proof}

\begin{lemma} \lb{l2.5}
Assume Hypothesis \ref{h2.2}, that is, suppose that 
\begin{equation}
\lambda_{\gamma,n,0} \geq - (n-2)^2/4, \quad n \geq 3. 
\end{equation}
Then $H_{\gamma}$ has purely absolutely continuous spectrum, 
\begin{equation}
\sigma(H_{\gamma}) = \sigma_{ac}(H_{\gamma}) = [0,\infty). 
	\lb{2.42}
\end{equation}
\end{lemma}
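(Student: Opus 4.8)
The plan is to exploit the orthogonal-sum decomposition \eqref{2.34} and reduce the spectral analysis of $H_{\gamma}$ to that of the half-line operators $h_{\gamma,n,\ell}$, which are unitarily equivalent (via $U_n$) to Friedrichs extensions of Bessel-type operators $h_{\alpha_\ell}$ with $\alpha_\ell^2 = (n-2)^2/4 + \lambda_{\gamma,n,\ell}$ (comparing \eqref{2.35} with \eqref{2.36}, using $[(n-1)(n-3)/4] = (n-2)^2/4 - 1/4$). First I would observe that Hypothesis \ref{h2.2} guarantees $\alpha_0 \geq 0$ is real, and since the eigenvalues $\lambda_{\gamma,n,\ell}$ are nondecreasing in $\ell$ by \eqref{2.23}, we get $\alpha_\ell \geq \alpha_0 \geq 0$ real for every $\ell \in \bbN_0$; hence each summand is a well-defined nonnegative self-adjoint operator and $H_{\gamma} \geq 0$. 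The spectrum of $H_{\gamma}$ is then the closure of the union $\bigcup_{\ell=0}^\infty \sigma(h_{\alpha_\ell,F})$, and similarly for the a.c.\ part (a countable orthogonal sum of a.c.\ operators is a.c.).

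The key step is the spectral identification of the model operator $h_{\alpha,F}$ in \eqref{2.37}–\eqref{2.40}: for every $\alpha \in [0,\infty)$ one has $\sigma(h_{\alpha,F}) = \sigma_{ac}(h_{\alpha,F}) = [0,\infty)$, with uniform (multiplicity-one) a.c.\ spectrum. This is the classical fact that the Friedrichs (Bessel) realization $-d^2/dr^2 + (\alpha^2 - 1/4)r^{-2}$ on $(0,\infty)$ has purely a.c.\ spectrum filling $[0,\infty)$ — it is diagonalized by the Hankel transform of order $\alpha$, or alternatively one checks that for $z \in \bbC \setminus [0,\infty)$ the Weyl solutions are explicit (modified Bessel functions $r^{1/2} K_\alpha, r^{1/2} I_\alpha$), yielding a limit-point endpoint at $\infty$ and a Friedrichs-regular endpoint at $0$, so $(-\infty,0)$ is in the resolvent set, while a limiting-absorption/Weyl $m$-function argument on $(0,\infty)$ shows the spectrum there is purely a.c. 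I would cite this (it is standard; e.g.\ via the references \cite{GLN20}, \cite{GP79} already invoked for \eqref{2.37}, or \cite{RS75} for the $\alpha = (n-2)/2$ free case). Since every $\alpha_\ell \in [0,\infty)$, each summand contributes exactly $[0,\infty)$ as purely a.c.\ spectrum, and taking the orthogonal sum over $\ell \in \bbN_0$ preserves absolute continuity and gives $\sigma(H_\gamma) = \sigma_{ac}(H_\gamma) = \overline{[0,\infty)} = [0,\infty)$, as claimed in \eqref{2.42}.

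The main obstacle is making the reduction \eqref{2.34} fully rigorous, specifically verifying that the \emph{Friedrichs} extension of $L_\gamma|_{C_0^\infty(\bbR^n \setminus\{0\})}$ really decomposes as the orthogonal sum of the one-dimensional Friedrichs extensions $h_{\gamma,n,\ell}$ channel by channel — i.e.\ that the Friedrichs extension commutes with the angular-momentum projections onto $\cY_{\gamma,\ell}^n$. This is where Hypothesis \ref{h2.2} is essential (it is exactly what forces $\alpha_0^2 = (n-2)^2/4 + \lambda_{\gamma,n,0} \geq 0$, keeping every channel in the limit-point-at-$0$-after-Friedrichs regime rather than producing a genuine boundary-condition ambiguity that could spoil nonnegativity or alter the spectrum). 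I expect this bookkeeping to already be available from the Section \ref{s2} development culminating in \eqref{2.34}, so in the write-up I would simply invoke \eqref{2.34} together with the model computation for $h_{\alpha,F}$; the only genuinely new content is the elementary remark that $\alpha_\ell \geq \alpha_0 \geq 0$ for all $\ell$ by the ordering \eqref{2.23}, so that no channel has any point spectrum or any gap.
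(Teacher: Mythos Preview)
Your proposal is correct and follows essentially the same approach as the paper: reduce to the orthogonal sum \eqref{2.34}, use Hypothesis \ref{h2.2} (via the ordering \eqref{2.23}) to ensure each half-line Friedrichs operator $h_{\gamma,n,\ell}$ is a Bessel-type operator with $\alpha_\ell \geq 0$ and hence has purely absolutely continuous spectrum $[0,\infty)$, and then pass to the direct sum. The paper's proof is slightly more terse, citing \cite{EK07}, \cite{GZ06} for the a.c.\ spectrum of the radial summands and \cite[Theorem~XIII.85\,(f)]{RS78} for the direct-sum conclusion, but the logical structure is identical to yours.
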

\begin{proof}
First, one notes that $H_{\gamma}$ is bounded from below if and only if each 
$h_{\gamma,n,\ell}$, $\ell \in \bbN_0$, is bounded from below. The ordinary differential operators 
$h_{\gamma,n,\ell}$, $\ell \in \bbN_0$, are well-known to have purely absolutely continuous spectrum 
equal to $[0,\infty)$, as proven, for instance in \cite{EK07} and \cite{GZ06}. Thus the result follows from the special case of direct sums (instead of direct integrals) in \cite[Theorem~XIII.85\,(f)]{RS78}. 
\end{proof}

\section{Criticality} \lb{s3}

We now turn to one of the principal questions -- a discussion of which $\gamma \geq 0$ cause 
$H_{\gamma}$ to be bounded from below. 

The natural space to which Hardy's inequality and its analog in connection with a dipole potential extends is the space $D^1(\bbR^n)$ (sometimes also denoted $D_0^1(\bbR^n)$, or $D^{1,2}(\bbR^n)$) obtained as the closure of $C_0^{\infty}(\bbR^n)$ with respect to the gradient norm, 
\begin{align}
& D^1(\bbR^n) = \ol{C_0^{\infty}(\bbR^n)}^{\|\, \cdot\,\|_{\nabla}},  \quad \|f\|_{\nabla} = \bigg(\int_{\bbR^n} d^n x \, |(\nabla f)(x)|^2\bigg)^{1/2}, \quad f \in C_0^{\infty}(\bbR^n),
\end{align}
see also \cite[pp.~201--204]{LL01}.

\begin{theorem} \lb{t3.1} 
Assume Hypothesis \ref{h2.2}. Then for all $n\geq 3$, there exists a unique critical dipole moment 
$\gamma_{c,n}>0$ characterized by 
\begin{equation}
\lambda_{\gamma_{c,n},n,0} = - (n-2)^2/4
\end{equation}
$($cf.\ \eqref{2.31} in Remark \ref{r2.3}$)$. Moreover, 
$\lambda_{\gamma,n,0}$ is strictly monotonically decreasing with respect to $\gamma \in (0,\infty)$, 
$\lambda_{0,n,0} = 0$, and 
\begin{equation}
\f{d\lambda_{\gamma,n,0}}{d\gamma}\leq\f{\lambda_{\gamma,n,0}}{\gamma} < 0 \, 
\text{ as well as } \, \lambda_{\gamma,n,0} \geq - \gamma, \quad \gamma \in (0,\infty).      \lb{3.1}
\end{equation}
Moreover, 
\begin{equation}
- \f{\gamma^2}{(n-1)^2} \leq \lambda_{\gamma,n,0} \leq 
- \f{\gamma}{2} \f{I_{n/2}(2 \gamma/(n-1))}{I_{(n-2)/2}(2 \gamma/(n-1))} < 0, \quad \gamma \in (0,\infty), 
\end{equation}
hold. In particular, $H_{\gamma}$ is bounded from below, and then $H_{\gamma} \geq 0$, if and only if 
$\gamma \in [0, \gamma_{c,n}]$. Consequently,  
\begin{align} 
\begin{split} 
&\text{for all $\gamma \in [0,\gamma_{c,n}]$, and all $u \in \bbR^n$, $|u|=1$,} \\
&\quad \int_{\bbR^n} d^n x \, |(\nabla f)(x)|^2 \geq \pm \gamma \int_{\bbR^n} d^n x \, (u, x) |x|^{-3} |f(x)|^2, 
\quad f \in D^1(\bbR^n).      \lb{3.2}
\end{split} 
\end{align}  
The constant $\gamma_{c,n} > 0$ in \eqref{3.2} is optimal $($i.e., the largest possible\,$)$, in addition, 
\begin{equation}
\gamma_{c,n} \geq (n-2)^2/4.    \lb{3.2a} 
\end{equation}
Finally, 
\begin{equation}
\sigma(H_{\gamma}) = \sigma_{ac}(H_{\gamma}) = [0,\infty), \quad \gamma \in [0, \gamma_{c,n}].     \lb{3.2b}
\end{equation}
\end{theorem}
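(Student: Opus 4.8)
The plan is to route everything through the angular operator $\Lambda_{\gamma,n}=-\Delta_{\bbS^{n-1}}+\gamma\cos(\theta_{n-1})$ from \eqref{2.20} and its lowest eigenvalue $\lambda_{\gamma,n,0}$, which is simple for every $\gamma\ge 0$ by \eqref{2.25}, and then transfer conclusions to $H_\gamma$ and to $D^1(\bbR^n)$ via the decomposition \eqref{2.24} together with the sharp one-dimensional Hardy inequality \eqref{2.27}. First, since $\gamma\mapsto\gamma\cos(\theta_{n-1})$ is a bounded self-adjoint perturbation, $\{\Lambda_{\gamma,n}\}_{\gamma\ge 0}$ is a real-analytic (type (A)) family, and because $\lambda_{\gamma,n,0}$ is a simple eigenvalue, standard analytic perturbation theory makes $\gamma\mapsto\lambda_{\gamma,n,0}$ real-analytic on $[0,\infty)$ with a real-analytic normalized eigenfunction branch $\psi_\gamma$, which may be taken smooth and strictly positive by \eqref{2.25} and elliptic regularity. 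The Feynman--Hellmann formula gives $\lambda_{\gamma,n,0}'=\big(\psi_\gamma,\cos(\theta_{n-1})\psi_\gamma\big)_{L^2(\bbS^{n-1})}$, while the eigenvalue equation yields $\lambda_{\gamma,n,0}=\big(\psi_\gamma,(-\Delta_{\bbS^{n-1}})\psi_\gamma\big)+\gamma\lambda_{\gamma,n,0}'$. For $\gamma>0$ the ground state cannot be constant (a constant is not an eigenfunction of $\Lambda_{\gamma,n}$), so $\big(\psi_\gamma,(-\Delta_{\bbS^{n-1}})\psi_\gamma\big)>0$; and testing with the normalized constant, using $\int_{\bbS^{n-1}}\cos(\theta_{n-1})=0$ (reflection $\theta_{n-1}\mapsto\pi-\theta_{n-1}$), gives $\lambda_{\gamma,n,0}<0$ for $\gamma>0$ (with $\lambda_{0,n,0}=0$). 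Hence $\gamma\lambda_{\gamma,n,0}'=\lambda_{\gamma,n,0}-\big(\psi_\gamma,(-\Delta_{\bbS^{n-1}})\psi_\gamma\big)<\lambda_{\gamma,n,0}<0$, which is \eqref{3.1} (together with \eqref{2.30}), so $\lambda_{\gamma,n,0}$ is strictly decreasing. Moreover $\lambda_{\gamma,n,0}'\le\lambda_{\gamma,n,0}/\gamma$ means $\gamma\mapsto\lambda_{\gamma,n,0}/\gamma$ is non-increasing, so fixing any $\gamma_0>0$ we get $\lambda_{\gamma,n,0}\le(\lambda_{\gamma_0,n,0}/\gamma_0)\,\gamma\to-\infty$; by the intermediate value theorem there is a unique $\gamma_{c,n}>0$ with $\lambda_{\gamma_{c,n},n,0}=-(n-2)^2/4$, and $\{\gamma\ge 0:\lambda_{\gamma,n,0}\ge-(n-2)^2/4\}=[0,\gamma_{c,n}]$.

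Next I would prove the two-sided bounds \eqref{1.16}. For $a\ge 0$ set $\phi_a=\exp(-a\cos(\theta_{n-1}))$, a smooth positive function on $\bbS^{n-1}$. Using $-\Delta_{\bbS^{n-1}}f(\theta_{n-1})=-f''-(n-2)\cot(\theta_{n-1})f'$ for functions of $\theta_{n-1}$ alone, a direct computation gives $(-\Delta_{\bbS^{n-1}}\phi_a)/\phi_a=-a^2+a^2\cos^2(\theta_{n-1})-(n-1)a\cos(\theta_{n-1})$, so choosing $a=\gamma/(n-1)$ to cancel the linear term yields $(-\Delta_{\bbS^{n-1}}\phi_a)/\phi_a+\gamma\cos(\theta_{n-1})=-a^2+a^2\cos^2(\theta_{n-1})$, which lies in $[-\gamma^2/(n-1)^2,0)$. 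The ground-state substitution identity
\[
\big(\phi_a\chi,\Lambda_{\gamma,n}\phi_a\chi\big)=\int_{\bbS^{n-1}}\phi_a^2\,|\nabla_{\bbS^{n-1}}\chi|^2+\int_{\bbS^{n-1}}\Big[\tfrac{-\Delta_{\bbS^{n-1}}\phi_a}{\phi_a}+\gamma\cos(\theta_{n-1})\Big]\phi_a^2\,|\chi|^2,
\]
valid for $\chi\in H^1(\bbS^{n-1})$, gives with $\chi=\psi/\phi_a$ the lower bound $\lambda_{\gamma,n,0}\ge-\gamma^2/(n-1)^2$, and with $\chi\equiv 1$ the variational upper bound $\lambda_{\gamma,n,0}\le-a^2+a^2\int\cos^2(\theta_{n-1})\phi_a^2\big/\int\phi_a^2$. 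Reducing the sphere integrals to $\int_{-1}^1 e^{-2at}(1-t^2)^{(n-3)/2}\,dt$ and $\int_{-1}^1 t^2e^{-2at}(1-t^2)^{(n-3)/2}\,dt$ and invoking the Poisson representation $I_\nu(z)=\tfrac{(z/2)^\nu}{\sqrt\pi\,\Gamma(\nu+1/2)}\int_{-1}^1 e^{-zt}(1-t^2)^{\nu-1/2}\,dt$ with $\nu=(n-2)/2$ and $\nu=n/2$ turns the upper bound into $-\tfrac{\gamma}{2}\,I_{n/2}(2\gamma/(n-1))/I_{(n-2)/2}(2\gamma/(n-1))$, which is $<0$ since $I_\nu(z)>0$ for $z>0$.

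Then I would deduce the operator/form statements. By \eqref{2.24} together with \eqref{2.27}, the form of $L_\gamma\!\restriction_{C_0^\infty(\bbR^n\setminus\{0\})}$ is bounded from below (equivalently, $H_\gamma\ge 0$) if and only if every coefficient $c_{\gamma,n,\ell}:=[(n-1)(n-3)/4]+\lambda_{\gamma,n,\ell}$ satisfies $c_{\gamma,n,\ell}\ge-1/4$; since $(n-1)(n-3)/4+1/4=(n-2)^2/4$ and $\lambda_{\gamma,n,0}=\min_\ell\lambda_{\gamma,n,\ell}$, this holds for all $\ell$ exactly when $\lambda_{\gamma,n,0}\ge-(n-2)^2/4$, i.e.\ (by the first paragraph) exactly when $\gamma\in[0,\gamma_{c,n}]$, and then $H_\gamma\ge 0$. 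For the inequality \eqref{3.2}: for $n\ge 3$ the point $\{0\}$ has vanishing capacity for the gradient norm, so $C_0^\infty(\bbR^n\setminus\{0\})$ is dense in $D^1(\bbR^n)$; decomposing $f\in C_0^\infty(\bbR^n\setminus\{0\})$ along the eigenspaces of $\Lambda_{\gamma,n}$ and applying \eqref{2.27} coefficient-wise gives $\int_{\bbR^n}|\nabla f|^2\ge-\gamma\int_{\bbR^n}(u,x)|x|^{-3}|f|^2$ for $\gamma\le\gamma_{c,n}$ (the reversed-sign inequality follows by replacing $u$ with $-u$, since $\gamma_{c,n}$ is $u$-independent), hence $f\mapsto\int_{\bbR^n}(u,x)|x|^{-3}|f|^2$ is $\|\cdot\|_\nabla$-bounded on $C_0^\infty(\bbR^n\setminus\{0\})$ and extends continuously to $D^1(\bbR^n)$, so \eqref{3.2} follows by density. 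Optimality: for $\gamma>\gamma_{c,n}$ we have $c_{\gamma,n,0}<-1/4$, so by the sharpness in \eqref{2.27} there is $g\in C_0^\infty((0,\infty))$ with $\int_0^\infty(|g'|^2+c_{\gamma,n,0}r^{-2}|g|^2)\,dr<0$, and with $Y_0$ the positive ground state of $\Lambda_{\gamma,n}$ the function $f(r\omega)=r^{-(n-1)/2}g(r)Y_0(\omega)\in C_0^\infty(\bbR^n\setminus\{0\})\subset D^1(\bbR^n)$ violates \eqref{3.2}. Finally, \eqref{3.2a} is immediate from $-(n-2)^2/4=\lambda_{\gamma_{c,n},n,0}\ge-\gamma_{c,n}$ (i.e.\ \eqref{2.30}), or from $|(u,x)|x|^{-3}|\le|x|^{-2}$ and \eqref{1.1}; and \eqref{3.2b} is Lemma~\ref{l2.5}, which applies precisely when $\lambda_{\gamma,n,0}\ge-(n-2)^2/4$, i.e.\ for $\gamma\in[0,\gamma_{c,n}]$.

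\textbf{Main obstacle.} The crux is the first paragraph: extracting the differential inequality $\gamma\,\lambda_{\gamma,n,0}'\le\lambda_{\gamma,n,0}<0$, which at once yields strict monotonicity, \eqref{3.1}, and — via $\lambda_{\gamma,n,0}/\gamma$ being non-increasing — the decay of $\lambda_{\gamma,n,0}$ that forces $\gamma_{c,n}$ to exist; this hinges on the analyticity of the simple ground-state branch (Feynman--Hellmann) and on the strict positivity $\big(\psi_\gamma,(-\Delta_{\bbS^{n-1}})\psi_\gamma\big)>0$ for $\gamma>0$. The computations in the second paragraph (the substitution $\phi_a=\exp(-a\cos\theta_{n-1})$ and the identification of the sphere integrals with modified Bessel functions through the Poisson representation) are the remaining technical work, but are routine once set up.
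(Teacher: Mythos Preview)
Your proposal is correct and follows essentially the same route as the paper: analytic type~(A) perturbation theory plus Feynman--Hellmann for the differential inequality \eqref{3.1}, the exponential trial function $\exp(-\gamma(n-1)^{-1}\cos\theta_{n-1})$ for the two-sided Bessel bounds, and the radial decomposition \eqref{2.24} together with \eqref{2.27} to pass to $H_\gamma$ and to \eqref{3.2}. The only notable presentational differences are that you obtain $\lambda_{\gamma,n,0}<0$ by testing with the constant (the paper deduces it from the Bessel upper bound \eqref{3.13}), you phrase the lower bound via ground-state substitution (the paper writes the equivalent completion of the square \eqref{3.12c}--\eqref{3.12d} in the variable $\xi=\cos\theta_{n-1}$), and you extend \eqref{3.2} to $D^1(\bbR^n)$ by density and continuity of the bilinear form (the paper spells this out as a Fatou-type argument via the Sobolev inequality \eqref{3.26a}); all of these are equivalent.
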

\begin{proof} 
Existence of some critical dipole moment $\gamma_{c,n} > 0$ is clear from the discussion in 
Remark \ref{r2.3}. To prove the remaining claims regarding $\lambda_{\gamma,n,0}$ in Theorem \ref{t3.1}, we seek spherical harmonics dependent only on the final angle $\theta_{n-1}$, as this is the only angular variable dependence of $V_{\gamma}(\dott)$. From \eqref{A.10}--\eqref{A.13}, one infers these are precisely the ones indexed by the particular multi-indices $(\ell,0,\ldots,0)\in\bbN_{0}^{n}$, that is (cf.\ \eqref{A.10}--\eqref{A.14}),
\begin{align}
\begin{split} 
Y_{(\ell,0,\ldots,0)}(\theta_{n-1})=\bigg[\f{[(n-2)/2](n-2)_{\ell}}{\ell!(\ell+[(n-2)/2])}\bigg]^{1/2} 
C_{\ell}^{(n-2)/2}(\cos(\theta_{n-1})),&      \lb{3.3} \\[1mm] 
\ell \in \bbN_0, \; \theta_{n-1}\in[0,\pi).&
\end{split} 
\end{align}
Introducing the subspace
\begin{align}
\cL^{n} = {\rm lin.span} \{Y_{(\ell,0,\ldots,0)}\}_{\ell\in\bbN_{0}}.
	\lb{3.4}
\end{align} 
and restricting the Laplace--Beltrami differential expression \eqref{A.16} to $\cL^{n}$, one finds for \eqref{2.20}, 
\begin{align}
\Lambda_{\gamma,\cL^{n}} &= -\f{d^{2}}{d\theta_{n-1}^{2}} - (n-2)\cot(\theta_{n-1})\f{d}{d\theta_{n-1}} + \gamma\cos(\theta_{n-1}), \quad \theta_{n-1} \in (0,\pi),   \lb{3.5} 
\end{align}
acting on functions in $L^2\big((0,\pi); [\sin(\theta_{n-1})]^{n-2} d \theta_{n-1}\big)$. Reverting from the weighted measure $[\sin(\theta_{n-1})]^{n-2} d \theta_{n-1}$ to Lebesgue measure $d \theta_{n-1}$ on $(0,\pi)$ in a unitary fashion then yields the differential expression $\wti \Lambda_{\gamma,\cL^{n}}$ given by 
\begin{equation}
\wti \Lambda_{\gamma,\cL^{n}} = - \f{d^2}{d \theta_{n-1}^2} + \f{(n-2)(n-4)}{4 \sin^2(\theta_{n-1})} - \f{(n-2)^2}{4} 
+ \gamma \cos(\theta_{n-1}), \quad  \theta_{n-1} \in (0,\pi),    \lb{3.7} 
\end{equation}
now acting on functions in $L^{2}((0,\pi);d\theta_{n-1})$.

Next, introducing the change of variable $\xi=\cos(\theta_{n-1}) \in (-1,1)$, $\Lambda_{\gamma,\cL^{n}}$ in \eqref{3.5} turns into 
\begin{align}
\begin{split} 
{\ul \Lambda}_{\gamma,\cL^{n}} 
&= \big(1-\xi^2\big)^{- (n-3)/2} \bigg[- \f{d}{d \xi}  \big(1-\xi^2\big)^{(n-1)/2} \f{d}{d \xi} 
+ \gamma \big(1-\xi^2\big)^{(n-3)/2} \xi\bigg],        \lb{3.9} \\
& \hspace*{8.4cm} \xi \in (-1,1),     
\end{split} 
\end{align}
acting on functions in $L^{2}\Big((-1,1); \big(1-\xi^2 \big)^{(n-3)/2} d\xi \Big)$. We also note that reverting from the weighted measure $\big(1-\xi^2 \big)^{(n-3)/2} d\xi$ to Lebesgue measure $d \xi$ on $(-1,1)$ in a unitary fashion then finally yields the differential expression $\wti {\ul \Lambda}_{\gamma,\cL^{n}}$ given by 
\begin{align}
\begin{split} 
\wti {\ul \Lambda}_{\gamma,\cL^{n}} 
&= - \f{d}{d \xi}  \big(1-\xi^2\big)^{(n-1)/2} \f{d}{d \xi} 
+ \f{(n-3)^2}{4 \big(1-\xi^2\big)} - \f{(n-1)(n-3)}{4} + \gamma \, \xi,    \lb{3.12} \\
& \hspace*{7.4cm} \quad \xi \in (-1,1),      
\end{split} 
\end{align} 
acting on functions in $L^2((-1,1); d\xi)$. One observes that the first two terms on the right-hand side of \eqref{3.12} represent the Legendre operator $L_{\mu}$ in $L^2((-1,1); d\xi)$ associated with the differential expression
\begin{equation}
L_{\mu} = - \f{d}{d \xi}  \big(1-\xi^2\big)^{(n-1)/2} \f{d}{d \xi} 
+ \f{\mu^2}{\big(1-\xi^2\big)}, \quad \mu \in [0,\infty), \; \xi \in (-1,1),      \lb{3.12a} 
\end{equation}
which is in the limit circle case at $\pm 1$ if $\mu \in [0,1)$ and in the limit point case at $\pm 1$ if $\mu \in [1,\infty)$, as discussed in detail in \cite{EL05}. In particular, applying this fact to $\Lambda_{\gamma,\cL^{n}}$, 
$\wti \Lambda_{\gamma,\cL^{n}}$, ${\ul \Lambda}_{\gamma,\cL^{n}}$, and $\wti {\ul \Lambda}_{\gamma,\cL^{n}}$ yields the necessity of the Friedrichs boundary condition for $n=3, 4$, whereas for $n \in \bbN$, $n \geq 5$,  
$\Lambda_{\gamma,\cL^{n}}$ and $\wti \Lambda_{\gamma,\cL^{n}}$ (resp., ${\ul \Lambda}_{\gamma,\cL^{n}}$ and 
$\wti {\ul \Lambda}_{\gamma,\cL^{n}}$) are essentially self-adjoint on $C_0^{\infty}((0,\pi))$ (resp., 
$C_0^{\infty}((-1,1))$) and hence the associated maximally defined operators are self-adjoint. For the explicit form of the Friedrichs boundary condition corresponding to \eqref{3.12a} and hence \eqref{3.12} we also refer to \cite{EL05}.  
Due to the $\theta_{n-1}^{-2}$ (resp., $(\pi - \theta_{n-1})^{-2}$) singularity at $\theta_{n-1} = 0$ (resp., 
$\theta_{n-1} = \pi$), the Friedrichs extension corresponding to $\wti \Lambda_{\gamma,\cL^{n}}$ in \eqref{3.7} is clear from \eqref{2.36}--\eqref{2.40}. 

Following \cite{HG80} in the special case $n=3$, choosing $\psi \in  \dom \big( {\ul \Lambda}_{\gamma,\cL^{n}}\big)$ normalized, 
\begin{equation}
\|\psi\|_{L^{2}((-1,1); (1-\xi^2)^{(n-3)/2} d\xi)} = 1, 
\end{equation}
an appropriate integration by parts yields    
\begin{align}
& (\psi, {\ul \Lambda}_{\gamma,\cL^{n}} \psi)_{L^{2}((-1,1); (1-\xi^2)^{(n-3)/2} d\xi)}    \no \\
& \quad = \int_{-1}^{1} d\xi \Big[\big(1 - \xi^2\big)^{(n-1)/2} |\psi'(\xi)|^2  
+ \gamma \big(1 - \xi^2\big)^{(n-3)/2} \xi  |\psi(\xi)|^2\Big]    \lb{3.12c} \\
& \quad = \int_{-1}^{1} d\xi \big(1 - \xi^2\big)^{(n-1)/2} \Big[\big|\psi'(\xi) + \gamma (n-1)^{-1} \psi(\xi)\big|^2  
- \gamma^2 (n-1)^{-2} |\psi(\xi)|^2\Big]      \lb{3.12d} \\
& \quad \geq - \f{\gamma^2}{(n-1)^2} \int_{-1}^{1} d\xi \big(1 - \xi^2\big)^{(n-1)/2} |\psi(\xi)|^2    \no \\
& \quad \geq - \f{\gamma^2}{(n-1)^2} \int_{-1}^{1} d\xi \big(1 - \xi^2\big)^{(n-3)/2} |\psi(\xi)|^2   
= - \f{\gamma^2}{(n-1)^2}.
\end{align}
In particular, choosing for $\psi$ a normalized eigenfunction of ${\ul \Lambda}_{\gamma,\cL^{n}}$ corresponding to 
the eigenvalue $\lambda_{\gamma,n,0}$ in \eqref{3.12c} implies the lower bound
\begin{equation}
\lambda_{\gamma,n,0} \geq - \gamma^2\big/(n-1)^2.    \lb{3.12e}
\end{equation}
On the other hand (following once more  \cite{HG80} in the special case $n=3$), employing the normalized trial function (cf.\ \cite[no.~3.387]{GR80})
\begin{align}
& \phi_{\gamma} (\xi) = C_{\gamma} \, e^{- \gamma (n-1)^{-1} \xi}, \quad \xi \in (-1,1),     \no \\
& C_{\gamma} = \pi^{-1/4} [\gamma/(n-1)]^{(n-2)/4} [\Gamma((n-1)/2)]^{-1/2} [I_{(n-2)/2}(2\gamma/(n-1))]^{-1/2},    \\
& \|\phi_{\gamma}\|_{L^{2}((-1,1); (1-\xi^2)^{(n-3)/2} d\xi)} = 1,    \no      
\end{align}
with $I_{\nu}(\dott)$ the regular modified Bessel function of order $\nu \in \bbC$ (cf.\ \cite[Sect.~9.6]{AS72}), 
an application of the min/max principle and \eqref{3.12d} yield the upper bound 
\begin{align}
\lambda_{\gamma,n,0} & \leq 
(\phi_{\gamma}, {\ul \Lambda}_{\gamma,\cL^{n}}\phi_{\gamma})_{L^{2}((-1,1); (1-\xi^2)^{(n-3)/2} d\xi)}    \no\\
&= - \f{\gamma^2}{(n-1)^2} \int_{-1}^1 d\xi \, \big(1 - \xi^2\big)^{(n-1)/2} \phi_{\gamma}(\xi)^2     \no \\ 
& = - \f{\gamma}{2} \f{I_{n/2}(2 \gamma/(n-1))}{I_{(n-2)/2}(2 \gamma/(n-1))} < 0, \quad \gamma \in (0,\infty),      \lb{3.13}
\end{align}
employing \cite[no.~3.387]{GR80} once again. Thus, \eqref{3.13} implies that 
\begin{equation} 
\lambda_{\gamma,n,0} < 0, \quad \gamma \in (0,\infty),     \lb{3.13a} 
\end{equation}
and one infers a quadratic upper bound as $\gamma \downarrow 0$
\begin{equation}
- \f{\gamma}{2} \f{I_{n/2}(2 \gamma/(n-1))}{I_{(n-2)/2}(2 \gamma/(n-1))} \underset{\gamma \downarrow 0}{=} 
- \f{\gamma^2}{n(n-1)} \big[1 + \Oh\big(\gamma^2\big)\big],   
\end{equation}
in addition to the quadratic lower bound in \eqref{3.12e}. 

Next, recalling that the lowest eigenvalue $\lambda_{\gamma,n,0}$ of $\Lambda_{\gamma,n}$ is simple for all $\gamma \geq 0$ (and is also the lowest eigenvalue of $\Lambda_{\gamma,\cL^{n}}$, 
$\wti \Lambda_{\gamma,\cL^{n}}$, and ${\ul \Lambda}_{\gamma,\cL^{n}}$), we denote by $\psi_{\gamma, 0} \in \dom(\Lambda_{\gamma,n})$ the corresponding normalized eigenfunction, that is,
\begin{equation}
\Lambda_{\gamma,n} \psi_{\gamma, 0} = \lambda_{\gamma,n,0} \psi_{\gamma, 0}, \quad 
\|\psi_{\gamma, 0}\|_{L^{2}(\bbS^{n-1})} =1, \quad \gamma \in [0,\infty). 
\lb{3.14} 
\end{equation} 
Thus, one gets
\begin{align}
\begin{split}
\lambda_{\gamma,n,0}&=(\psi_{\gamma, 0}, \Lambda_{\gamma,n} 
\psi_{\gamma, 0})_{L^{2}(\bbS^{n-1})}     \\
&=(\psi_{\gamma, 0},[- \Delta_{\bbS^{n-1}}+\gamma\cos(\theta_{n-1})] 
\psi_{\gamma, 0})_{L^{2}(\bbS^{n-1})}.
\end{split}
      \lb{3.15}
\end{align}

Moreover, one observes that $\{\Lambda_{\gamma,n}\}_{\gamma\in[0,\infty)}$ is a self-adjoint analytic (in fact, entire) family of type $(A)$ in the sense of Kato (cf.\ \cite[Sect.~VII.2, p.~375--379]{Ka95}, \cite[p.~16]{RS78}), implying analyticity of  $\lambda_{\gamma,n,0}$ and $ \psi_{\gamma, 0}$ with respect to $\gamma$ in a complex neighborhood of $[0,\infty)$. In particular, $\lambda_{\gamma,n,0}$ is differentiable with respect 
to $\gamma$, and the Feynman--Hellmann Theorem \cite[p.~151]{Th81} (see also \cite[Theorem~1.4.7]{Si15}) yields  that
\begin{align}
\f{d\lambda_{\gamma,n,0}}{d\gamma}=(\psi_{\gamma, 0},\cos(\theta_{n-1}) 
\psi_{\gamma, 0})_{L^{2}(\bbS^{n-1})}, \quad \gamma \in (0,\infty).
	\lb{3.16}
\end{align}
Returning to the discussion of \eqref{2.30} in Remark \ref{r2.3}, employing $- \Delta_{\bbS^{n-1}} \geq 0$, one obtains 
\begin{equation}
\lambda_{\gamma,n,0} = (\psi_{\gamma, 0}, \Lambda_{\gamma,n}\psi_{\gamma, 0})_{L^{2}(\bbS^{n-1})} 
 \geq (\psi_{\gamma, 0},\gamma\cos(\theta_{n-1})\psi_{\gamma, 0})_{L^{2}(\bbS^{n-1})} \geq - \gamma, 
        \lb{3.17}
\end{equation}
implying,  
\begin{align}
\f{d\lambda_{\gamma,n,0}}{d\gamma} 
= (\psi_{\gamma, 0},\cos(\theta_{n-1})\psi_{\gamma, 0})_{L^{2}(\bbS^{n-1})} 
\leq \f{\lambda_{\gamma,n,0}}{\gamma} < 0, \quad \gamma \in (0,\infty),
	\lb{3.18}
\end{align}
by the strict negativity of $\lambda_{\gamma,n,0}$ for $\gamma > 0$ derived in \eqref{3.13}.

Given the existence of a unique critical dipole moment $\gamma_{c,n} > 0$ one concludes from \eqref{2.27},  \eqref{2.34}, and \eqref{2.35} the following fact: 
\begin{align}
\begin{split} 
& H_{\gamma}|_{C_0^{\infty}(\bbR^n \backslash \{0\})} \, \text{ is bounded from below, in fact, nonnegative,} 
\\
& \quad \text{if and only if } \, \gamma \in [0, \gamma_{c,n}], 
\end{split} 
\end{align}
and an integration by parts thus yields 
\begin{equation}
\pm \gamma \int_{\bbR^n} d^nx \, (u,x) |x|^{-3} |g(x)|^2 \leq \int_{\bbR^n} d^n x \, |(\nabla g)(x)|^2, 
\quad g \in C_0^{\infty}(\bbR^n), \quad \gamma \in [0, \gamma_{c,n}].    \lb{3.24AA} 
\end{equation}
It remains to extend \eqref{3.24AA} to elements $f \in D^1(\bbR)$. As in the case of the Hardy inequality \eqref{1.1}, this follows from invoking a Fatou-type argument to be outlined next. 

Since $C_0^{\infty}(\bbR^n)$ is dense in $D^1(\bbR^n)$, given $f \in D^1(\bbR^n)$ we pick a sequence 
$\{f_j\}_{j \in \bbN} \subset C_0^{\infty}(\bbR^n)$ such that $\lim_{j \to\infty}\|f_j - f\|_{D^1(\bbR^n)} = 0$, 
and, by passing to a subsequence, we may assume without loss of generality (see \eqref{3.26a} below) that $f_j \underset{j \to \infty}{\longrightarrow} f$ a.e.~on $\bbR^n$. 
(For the remainder of this proof $f, f_j$, $j \in \bbN$, will always be assumed to have the properties just discussed.) 
Indeed, the Sobolev inequality (see, e.g., \cite[Theorem~8.3]{LL01}, \cite{Ta76}),
\begin{align}
\begin{split} 
&\|\nabla f\|_{L^2(\bbR^n)}^2 \geq S_n \|f\|_{L^{2^*}(\bbR^n)}^2, \quad f \in D^1(\bbR^n), 
\quad 2^* = 2n/(n-2),    \lb{3.26a} \\
& \, S_n = [n(n-2)/4] 2^{2/n} \pi^{(n+1)/n} \Gamma((n+1)/2)^{-2/n},  \quad n \geq 3, 
\end{split} 
\end{align}
($\Gamma(\, \cdot \,)$ the Gamma function, cf.\ \cite[Sect.~6.1]{AS72}), yields convergence of $f_j$ to $f$ in $L^{2^*}(\bbR^n)$ and hence permits the selection of a subsequence that converges pointwise a.e.~Thus, given Hardy's inequality for functions in $C_0^{\infty}(\bbR^n)$, a well-known fact (see, e.g., \cite[Corollary~1.2.6]{BEL15}), 
\begin{equation}
\big[(n-2)^2/4\big] \int_{\bbR^n} d^nx \, |x|^{-2} |g(x)|^2 \leq \int_{\bbR^n} d^n x \, |(\nabla g)(x)|^2, 
\quad g \in C_0^{\infty}(\bbR^n),     \lb{3.22} 
\end{equation}
one obtains, 
\begin{align}
& \big[(n-2)^2/4\big] \int_{\bbR^n} d^nx \, |x|^{-2} |f_j(x)|^2 
\leq \int_{\bbR^n} d^n x \, |[\nabla (f_j - f + f )](x)|^2   \no \\
& \quad \leq 2 \int_{\bbR^n} d^n x \, |[\nabla (f_j - f )](x)|^2 + 2 \int_{\bbR^n} d^n x \, |(\nabla f )(x)|^2 \leq C,   
\end{align}
for some $C \in (0,\infty)$ independent of $j \in \bbN$. Thus, 
\begin{equation} 
\big[(n-2)^2/4\big] \int_{\bbR^n} d^n x \, |x|^{-2} |f(x)|^2 \leq C, \quad f \in D^1(\bbR^n),    \lb{3.23} 
\end{equation} 
by a consequence of Fatou's Lemma (see, e.g., \cite[p.~21]{LL01}. Hence,  
\begin{align}
& \big[(n-2)^2/4\big] \int_{\bbR^n} d^nx \, |x|^{-2} |f(x)|^2 
= \big[(n-2)^2/4\big] \int_{\bbR^n} d^nx \, \lim_{j \to \infty} |x|^{-2} |f_j(x)|^2  \no \\
& \quad = \big[(n-2)^2/4\big] \int_{\bbR^n} d^nx \, \liminf_{j \to \infty}  |x|^{-2} |f_j(x)|^2  \no \\
& \quad \leq  \big[(n-2)^2/4\big]  
\liminf_{j \to \infty} \int_{\bbR^n} d^nx \, |x|^{-2} |f_j(x)|^2  \quad \text{(by Fatou's Lemma)} 
\no \\
&\quad \leq \liminf_{j \to \infty} \int_{\bbR^n} d^nx \, |(\nabla f_j)(x)|^2  \quad \text{(by \eqref{3.22})}   \no \\
& \quad = \lim_{j \to \infty} \int_{\bbR^n} d^nx \, |(\nabla f_j)(x)|^2 = \int_{\bbR^n} d^nx \, |(\nabla f)(x)|^2,    \lb{3.24}
\end{align} 
extends Hardy's inequality \eqref{3.22} from $C_0^{\infty} (\bbR^n)$ to $D^1(\bbR^n)$. Hardy's inequality on $D^1(\bbR^n)$ also implies that
\begin{equation}
\lim_{j \to \infty} \int_{\bbR^n} d^n x \, |x|^{-2} |f(x) - f_j(x)|^2 = 0,    \lb{3.24A} 
\end{equation}
in particular,
\begin{equation}
\lim_{j \to \infty} \int_{\bbR^n} d^n x \, |x|^{-2} |f_j(x)|^2 = \int_{\bbR^n} d^n x \, |x|^{-2} |f(x)|^2.  \lb{3.24a}
\end{equation}
Since 
\begin{equation}
|(u,x)| |x|^{-1} \leq 1, \quad x \in \bbR^n \backslash \{0\}, \quad u \in \bbR^n, \; |u| = 1,  \lb{3.24B} 
\end{equation}
\eqref{3.23} also implies 
\begin{equation} 
\big[(n-2)^2/4\big] \int_{\bbR^n} d^n x \, |(u,x)| |x|^{-3} |f(x)|^2 \leq C, \quad f \in D^1(\bbR^n),    \lb{3.24b} 
\end{equation} 
similarly, \eqref{3.24A}, \eqref{3.24a}, and H\"older's inequality imply
\begin{equation}
\lim_{j \to \infty} \int_{\bbR^n} d^n x \, (u,x) |x|^{-3} |f_j(x)|^2 
= \int_{\bbR^n} d^n x \, (u,x) |x|^{-3} |f(x)|^2.  \lb{3.24c}
\end{equation}
Thus, for $\gamma \in [0,\gamma_{c,n}]$,
\begin{align}
& \pm \gamma \int_{\bbR^n} d^nx \, (u,x) |x|^{-3} |f(x)|^2 = \pm \lim_{j \to \infty} \gamma 
\int_{\bbR^n} d^nx \, (u,x) |x|^{-3} |f_j(x)|^2  \quad \text{(by \eqref{3.24c})} 
\no \\
&\quad \leq \lim_{j \to \infty} \int_{\bbR^n} d^nx \, |(\nabla f_j)(x)|^2  \quad \text{(by \eqref{3.24AA})}    \no \\ 
& \quad = \int_{\bbR^n} d^nx \, |(\nabla f)(x)|^2,    \lb{3.24d}
\end{align} 
finally implying \eqref{3.2}. Moreover, \eqref{3.24B} also yields 
\begin{align}
\int_{\bbR^n} d^n x \, |(\nabla f)(x)|^2 &\geq [(n-2)/2]^2 \int_{\bbR^n} d^n x \, |x|^{-2} |f(x)|^2   \lb{3.37a} \\
& \geq [(n-2)/2]^2 \int_{\bbR^n} d^n x \, |(u,x)| |x|^{-3} |f(x)|^2, \quad  f \in D^1(\bbR^n),   \no 
\end{align}
and hence \eqref{3.2a}. 

Finally, \eqref{3.2b} is clear from Lemma \ref{l2.5} and the strict monotonicity of $\lambda_{\gamma,n,0}$ with respect to $\gamma \geq 0$. 
\end{proof}

\begin{remark} \lb{r3.2} 
$(i)$ Theorem \ref{t3.1} demonstrates that $\gamma_2 = \infty$ in Remark \ref{r2.3}. \\[1mm]
$(ii)$ Inequality \eqref{3.2a}, that is, $\gamma_{c,n} \geq (n-2)^2/4$, shows that $\gamma_{c,n}$ grows at least like $c n^2$ for appropriate $ c >0$ as $n \to \infty$. \hfill $\diamond$
\end{remark}

Next, we improve upon Remark \ref{r3.2}\,$(ii)$ for $n \geq 5$ as follows:

\begin{theorem} \lb{t3.3}
Assume Hypothesis \ref{h2.2}. Then there exists $C_0 \in (0,\infty)$ such that 
\begin{equation}
\gamma_{c,n} \underset{n \to \infty}{=} C_0 (n-2)(n-4) [1 + \oh(1)],     \lb{3.38} 
\end{equation}
in addition, 
\begin{equation}
15 \pi [(n-2)(n-4) + 4]/32 \geq \gamma_{c,n} \geq \begin{cases}
1/4, & n=3, \\
1, & n=4, \\
3^{3/2} [(n-2)(n-4) + 1]/8, & n \geq 5,
\end{cases}      \lb{3.41e} 
\end{equation}
\end{theorem}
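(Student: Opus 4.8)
The plan is to extract the asymptotics \eqref{3.38} and the explicit two-sided bounds \eqref{3.41e} from the variational characterization \eqref{1.9} of $\gamma_{c,n}$, which via the substitution $\psi(\theta)=[\sin\theta]^{(n-2)/2}\varphi(\theta)$ is equivalent to analyzing the lowest eigenvalue $\lambda_{\gamma,n,0}$ of the one-dimensional operator $\wti\Lambda_{\gamma,\cL^n}$ in \eqref{3.7}. The key structural observation is that the singular coefficient $(n-2)(n-4)/[4\sin^2(\theta_{n-1})]$ is exactly of the form $[s^2-(1/4)]/\sin^2(\theta_{n-1})$ with $s^2-(1/4)=(n-2)(n-4)/4$, i.e. $s=(n-3)/2$; hence the unperturbed part of $\wti\Lambda_{\gamma,\cL^n}$ is precisely the exactly solvable operator in \eqref{1.17}, whose spectral data (eigenvalues, eigenfunctions in terms of hypergeometric/Gegenbauer functions, and in particular the bottom of its spectrum on $(0,\pi)$ with Friedrichs boundary conditions) are known in closed form. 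The criticality condition \eqref{2.31}, rewritten through \eqref{2.28} as $[(n-1)(n-3)/4]+\lambda_{\gamma_{c,n},n,0}=-1/4$, then becomes the statement that $\gamma_{c,n}$ is the unique coupling at which the perturbed operator's ground-state energy hits this threshold, and both bounds in \eqref{3.41e} will come from sandwiching that threshold crossing.

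First I would establish the upper bound $\gamma_{c,n}\le 15\pi[(n-2)(n-4)+4]/32$ by inserting a concrete trial function into \eqref{1.9}: by \eqref{1.9}, any admissible $\varphi\in H^1_0((0,\pi))$ gives a lower bound on $\gamma_{c,n}^{-1}$, hence an upper bound on $\gamma_{c,n}$. A natural choice is $\varphi(\theta)=[\sin(\theta)]^{(n-3)/2}\sin(\theta/2)$ or a similar power of $\sin\theta$ times a factor making the numerator $\int_0^\pi [-\cos\theta]|\varphi|^2$ strictly positive; one computes the two Beta-function integrals $\int_0^\pi[\sin\theta]^a\,d\theta$ and $\int_0^\pi[-\cos\theta][\sin\theta]^a\,d\theta$ (the latter via a shift $\theta\mapsto\pi-\theta$ combined with $\int_0^\pi\cos\theta[\sin\theta]^a\,d\theta=0$, so one needs an odd-weight variant, e.g. $\varphi^2$ containing a $\sin(\theta/2)^2=(1-\cos\theta)/2$ factor), together with $\int_0^\pi|\varphi'|^2+[(n-2)(n-4)/4][\sin\theta]^{-2}|\varphi|^2$. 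The affine-in-$(n-2)(n-4)$ shape of the bound signals that the denominator integral is linear in that parameter for the chosen $\varphi$, and the constant $15\pi/32$ is simply the resulting ratio of Beta functions; the ``$+4$'' absorbs the $\theta$-derivative contribution.

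Next I would prove the lower bound. For $n=3,4$ these are just \eqref{3.2a} (giving $\gamma_{c,3}\ge 1/4$, $\gamma_{c,4}\ge 1$), so only $n\ge5$ needs work. Here I would go back to \eqref{3.41b} — the inequality the authors announce as coming from the exact solvability of \eqref{1.17} — which presumably bounds $\lambda_{\gamma,n,0}$ from above by a closed-form expression built from the spectrum of the hypergeometric operator; setting that expression equal to $-(n-2)^2/4$ and solving for $\gamma$ yields the claimed lower bound $3^{3/2}[(n-2)(n-4)+1]/8$. Concretely: since $\wti\Lambda_{\gamma,\cL^n}\ge \wti\Lambda_{0,\cL^n}-\gamma$ (as $-1\le\cos\theta_{n-1}$) is too crude, one instead uses that $\cos\theta_{n-1}\ge -1+c\,\theta_{n-1}^2$ near $0$ type refinements, or better, one uses the explicit first eigenfunction $y_0$ of the operator in \eqref{1.17} with $s=(n-1)/2$ (so that its eigenvalue already equals the threshold $-(n-2)^2/4$) as a trial function for $\wti\Lambda_{\gamma,\cL^n}$; then $\lambda_{\gamma,n,0}\le -(n-2)^2/4+\gamma\,(y_0,\cos\theta_{n-1}\,y_0)$, and since $(y_0,\cos\theta_{n-1}\,y_0)<0$ (the ground state of the $s=(n-1)/2$ problem is peaked where $\cos\theta_{n-1}<0$) this forces $\lambda_{\gamma,n,0}<-(n-2)^2/4$ for all $\gamma>0$ — which is false, so one must instead use the eigenfunction at a slightly subcritical $s$ and track the $\gamma$-dependence, giving an inequality of the form $\lambda_{\gamma,n,0}\le -(n-2)^2/4$ whenever $\gamma\le 3^{3/2}[(n-2)(n-4)+1]/8$, i.e. $\gamma_{c,n}\ge$ that value. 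Finally, \eqref{3.38} follows by combining the two bounds in \eqref{3.41e}: both are asymptotic to $(\text{const})\cdot(n-2)(n-4)$, so $\gamma_{c,n}/[(n-2)(n-4)]$ is bounded above and below by positive constants; to get convergence to a single $C_0$ one invokes a monotonicity/continuity argument in $n$ via the Rayleigh quotient \eqref{1.9} — rescaling $\theta$ and passing to the limit $n\to\infty$ turns \eqref{1.9} into a fixed variational problem (a Schrödinger problem on a half-line with an inverse-square potential of fixed strength, obtained after $\theta\mapsto\theta/\sqrt{(n-2)(n-4)}$-type scaling), whose optimal value is $C_0^{-1}$.

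\textbf{Main obstacle.} I expect the genuine difficulty to be the lower bound for $n\ge5$: converting the exact solvability of \eqref{1.17} into a clean, explicitly-evaluable upper bound on $\lambda_{\gamma,n,0}$ (the announced \eqref{3.41b}) requires choosing the right hypergeometric trial function and carrying out a delicate Beta-/Gamma-function computation, and then the threshold-crossing inequality must be massaged into the exact affine form $3^{3/2}[(n-2)(n-4)+1]/8$ — in particular pinning down the constant $3^{3/2}/8$ and verifying it dominates $(n-2)^2/4$ for all $n\ge5$. The existence of the single limiting constant $C_0$ in \eqref{3.38} is the second delicate point, since it is not implied by the two-sided bound alone and needs the limiting variational problem to be identified and shown to have a well-defined (finite, positive) optimal value.
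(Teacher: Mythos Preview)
Your proposal has a genuine gap in the lower bound for $n\ge 5$, and the direction of the argument is inverted. You write that a trial-function computation should yield ``$\lambda_{\gamma,n,0}\le -(n-2)^2/4$ whenever $\gamma\le 3^{3/2}[(n-2)(n-4)+1]/8$, i.e.\ $\gamma_{c,n}\ge$ that value.'' But $\lambda_{\gamma,n,0}$ is strictly decreasing in $\gamma$ with $\lambda_{0,n,0}=0$, so $\lambda_{\gamma,n,0}\le -(n-2)^2/4$ is equivalent to $\gamma\ge\gamma_{c,n}$, not $\gamma\le\gamma_{c,n}$; moreover any trial function gives an \emph{upper} bound on $\lambda_{\gamma,n,0}$, which can only produce an \emph{upper} bound on $\gamma_{c,n}$. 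You also misread what \eqref{3.41b} is: it is not a bound on $\lambda_{\gamma,n,0}$ at all, but the Hardy-type inequality
\[
\int_0^\pi |\varphi'|^2\,d\theta \;\ge\; \tfrac14\int_0^\pi \frac{|\varphi|^2}{\sin^2\theta}\,d\theta \;+\; \tfrac14\int_0^\pi |\varphi|^2\,d\theta,\qquad \varphi\in H_0^1((0,\pi)),
\]
which is the $s=0$ case of the spectral lower bound for the operator in \eqref{1.17}. The paper uses this to bound the denominator of the Rayleigh quotient \eqref{1.9} from below by $\tfrac14[(n-2)(n-4)+1]\int|\varphi|^2/\sin^2\theta$, then observes that the optimizer may be taken supported in $[\pi/2,\pi]$ and applies the elementary pointwise bound $-\cos\theta\,\sin^2\theta\le 2/3^{3/2}$ to the resulting ratio. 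That is the entire mechanism behind the constant $3^{3/2}/8$; no hypergeometric trial function or delicate Beta/Gamma manipulation is involved.

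Two smaller points. For the upper bound, your idea (plug a trial function into \eqref{1.9}) is correct in spirit, but your candidate $[\sin\theta]^{(n-3)/2}\sin(\theta/2)$ is $n$-dependent, which is inconsistent with your own observation that the affine shape of the bound requires $\varphi$ independent of $n$; the paper simply takes $\varphi_0(\theta)=\sin(2\theta)\,\chi_{[\pi/2,\pi]}(\theta)$ and the integrals are elementary. For the asymptotic \eqref{3.38}, your rescaling $\theta\mapsto\theta/\sqrt{(n-2)(n-4)}$ sends $\cos\theta\to 1$ in the numerator and does not obviously produce a well-posed limiting variational problem. The paper instead observes directly that for each fixed $\varphi$ the denominator of \eqref{1.9} is increasing in $n$, so $[(n-2)(n-4)/4]\,\gamma_{c,n}^{-1}$ is bounded (by the argument above) and monotone in $n$, hence convergent; no identification of a limit problem is needed.
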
 
\begin{proof} 
Employing \cite[eq.~(1) and Remark~1]{FMT08}, one considers the Rayleigh quotient
\begin{align}
\begin{split} 
\Gamma_{n}\big(\gamma(u,x)|x|^{-1}\big)= -\gamma \underset{f\in D^{1}(\bbR^{n})\setminus\{0\}}\sup\,
\left\{\f{\int_{\bbR^n} \,d^n x \, (u,x)|x|^{-3}|f(x)|^{2}}{\int_{\bbR^n} d^n x \, |\nabla f(x)|^{2}}\right\},& \\ 
\gamma\in(0,\infty), \; n\geq 3,&     \lb{3.40a}
\end{split} 
\end{align}
and notes that $\Gamma_{n}\big(\gamma(u,x)|x|^{-1}\big) \uparrow 1$ as $\gamma \uparrow \gamma_{c,n}$, implying 
(cf.\ \eqref{3.7})  
\begin{align}
\gamma_{c,n}^{-1} &= \underset{\varphi \in H_{0}^{1}((0,\pi))\backslash\{0\}}{\sup} \Bigg\{\int_{0}^{\pi} d\theta_{n-1} \, 
[- \cos(\theta_{n-1})] |\varphi(\theta_{n-1})|^{2}     \no \\
& \quad \times \bigg[\int_{0}^{\pi} d\theta_{n-1} \, |\varphi'(\theta_{n-1})|^{2}+[(n-2)(n-4)/4][\sin(\theta_{n-1})]^{-2} 
|\varphi(\theta_{n-1})|^{2}\bigg]^{-1}\Bigg\},      \no\\
&\hspace{9.5cm} n\geq 3.        \lb{3.41a}
\end{align} 

Employing the fact that
\begin{equation}
\bigg(- \f{d^2}{dx^2} + \f{s^2 - (1/4)}{\sin^2(x)}\bigg)\bigg|_{C_0^{\infty}((0,\pi))} \geq [(1/2) + s]^2 I_{L^2((0,\pi); dx)}, 
\quad s \geq 0,
\end{equation}
(this follows from \cite[Sect.~4]{GK85} for $s > 0$ and extends to $s=0$ utilizing \cite[Subsect.~6.1]{GLN20}) one concludes the following variant of Hardy's inequality (upon taking $s=0$) with optimal constants $1/4$,
\begin{equation}
\int_0^{\pi} dx \, |\varphi'(x)|^2 \geq \f{1}{4} \int_0^{\pi} dx \, \f{|\varphi(x)|^2}{\sin^2(x)} 
+ \f{1}{4} \int_0^{\pi} dx \, |\varphi(x)|^2, \quad \varphi \in C_0^{\infty}((0,\pi)), 
\end{equation}
which, by a density argument, extends to 
\begin{equation}
\int_0^{\pi} dx \, |\varphi'(x)|^2 \geq \f{1}{4} \int_0^{\pi} dx \, \f{|\varphi(x)|^2}{\sin^2(x)} 
+ \f{1}{4} \int_0^{\pi} dx \, |\varphi(x)|^2,  \quad \varphi \in H_0^1((0,\pi))   \lb{3.41b}
\end{equation}
(see \cite{GPS21}). 
Thus, employing \eqref{3.41b} in \eqref{3.41a} yields 
\begin{align}
\gamma_{c,n}^{-1} &\leq \underset{\varphi \in H_{0}^{1}((0,\pi))\backslash\{0\}}{\sup} \Bigg\{\int_{0}^{\pi} d\theta_{n-1} \, 
[- \cos(\theta_{n-1})] |\varphi(\theta_{n-1})|^{2}     \no \\
& \quad \times \bigg[\int_{0}^{\pi} d\theta_{n-1} \, \big\{(1/4)+ [(n-2)(n-4)/4]\big\}[\sin(\theta_{n-1})]^{-2} 
|\varphi(\theta_{n-1})|^{2}\bigg]^{-1}\Bigg\}       \no\\
&\leq \f{4}{(n-2)(n-4) + 1} \underset{\varphi \in H_{0}^{1}((0,\pi))\backslash\{0\}}{\sup} \Bigg\{\int_{0}^{\pi} [\sin(\theta_{n-1})]^{-2} d\theta_{n-1} \,   \no \\
& \hspace*{5.75cm}  \times 
[- \cos(\theta_{n-1})] [\sin(\theta_{n-1})]^2 |\varphi(\theta_{n-1})|^{2}     \no \\
& \hspace*{4cm}  \times \bigg[\int_{0}^{\pi} [\sin(\theta_{n-1})]^{-2} d\theta_{n-1} \, 
|\varphi(\theta_{n-1})|^{2}\bigg]^{-1}\Bigg\}       \lb{3.41ba} \\ 
&= \f{4}{(n-2)(n-4) + 1} \underset{\varphi \in H_{0}^{1}((0,\pi))\backslash\{0\}}{\sup} \Bigg\{\int_{\pi/2}^{\pi} [\sin(\theta_{n-1})]^{-2} d\theta_{n-1}    \no \\
& \hspace*{5.75cm}  \times  
[- \cos(\theta_{n-1})] [\sin(\theta_{n-1})]^2 |\varphi(\theta_{n-1})|^{2}     \no \\
& \hspace*{4.9cm}  \times \bigg[\int_{\pi/2}^{\pi} [\sin(\theta_{n-1})]^{-2} d\theta_{n-1} \, |\varphi(\theta_{n-1})|^{2}\bigg]^{-1}\Bigg\}       \no\\ 
& \leq \big[8\big/3^{3/2}\big] [(n-2)(n-4) + 1]^{-1}, \quad n\geq 4.         \lb{3.41c}
\end{align} 
Here we used the estimate, 
\begin{equation}
- \cos(\theta) \sin^2(\theta) \leq 2 \big/ 3^{3/2}, \quad \theta \in [\pi/2,\pi],
\end{equation}
and the fact that due to the sign change of $\cos(\theta)$ as $\theta$ crosses $\pi/2$, the numerator in \eqref{3.41ba} diminishes and the denominator in \eqref{3.41ba} increases, altogether diminishing the ratio in \eqref{3.41ba} 
if $\varphi(\dott)$ has support in $[0,\pi/2]$. Thus, one is justified assuming that $\varphi(\dott)$ has support in $[\pi/2,\pi]$ only. 

In the case $n=3$, the factor $[(n-2)(n-4) + 1]/4$ in ]\eqref{3.41ba} vanishes, and hence we now employ the additional 
term $\|\varphi\|^2_{L^2((0,\pi);dx)}/4$ in \eqref{3.41b} to arrive at 
\begin{align}
\gamma_{c,3}^{-1} &\leq \underset{\varphi \in H_{0}^{1}((0,\pi))\backslash\{0\}}{\sup} \Bigg\{\int_{0}^{\pi} d\theta_{2} \, 
[- \cos(\theta_{2})] |\varphi(\theta_{2})|^{2} \bigg[ \f{1}{4} \int_{0}^{\pi} d\theta_{2} \, 
|\varphi(\theta_{2})|^{2}\bigg]^{-1}\Bigg\}      \no\\
&\leq 4 \underset{\varphi \in H_{0}^{1}((0,\pi))\backslash\{0\}}{\sup} \Bigg\{\int_{\pi/2}^{\pi} d\theta_{2} \, 
[- \cos(\theta_{2})] |\varphi(\theta_{2})|^{2} \bigg/ \int_{\pi/2}^{\pi} d\theta_{2} \, |\varphi(\theta_{2})|^{2}\Bigg\}       \no\\
& \leq 4.         \lb{3.41d}
\end{align} 
Altogether, this implies the lower bound in \eqref{3.41e} and hence improves on Remark \ref{r3.2}\,$(ii)$ for $n \geq 5$. (For $n=4$ one can include the term $\|\varphi\|^2_{L^2((0,\pi));dx)}/4$ to improve the lower bound, but the actual details become so unwieldy that we refrain from doing so.) For $n=3,4$ we just recalled \eqref{3.2a}.

Next, introducing the functionals
\begin{align}
& F_n(\varphi) = \int_{0}^{\pi} d\theta_{n-1} \, 
[- \cos(\theta_{n-1})] |\varphi(\theta_{n-1})|^{2}     \no \\
& \quad \times \bigg[\int_{0}^{\pi} d\theta_{n-1} \, \big\{|\varphi'(\theta_{n-1})|^{2}+[(n-2)(n-4)/4][\sin(\theta_{n-1})]^{-2} 
|\varphi(\theta_{n-1})|^{2}\big\}\bigg]^{-1},     \no \\
& \hspace*{5.5cm} \varphi \in H_{0}^{1}((0,\pi))\backslash\{0\}, \; n \in \bbN, \; n \geq 3,
\end{align}
one concludes as in \eqref{3.41c} that
\begin{align}
\begin{split} 
F_n(\varphi) & \leq \f{\int_{\pi/2}^{\pi} [\sin(\theta_{n-1})]^{-2} d \theta_{n-1} \, [- \cos(\theta_{n-1})] \sin^2(\theta_{n-1}) 
|\varphi(\theta_{n-1})|^2 }{\int_{\pi/2}^{\pi} [\sin(\theta_{n-1})]^{-2} d \theta_{n-1} \, |\varphi(\theta_{n-1})|^2}     \\
& \leq 2 \big/ 3^{3/2}, \quad n \in \bbN, \; n \geq 3,
\end{split} 
\end{align}
is uniformly bounded with respect to $n$ and strictly monotonically decreasing with respect to $n$. Consequently, also 
\begin{equation}
\gamma_{c,n}^{-1} (n-2)(n-4)/4 =  \underset{\varphi \in H_{0}^{1}((0,\pi))\backslash\{0\}}{\sup} F_n(\varphi)
\end{equation}
is bounded and monotonically decreasing with respect to $n$ and hence has a limit as $n \to \infty$, proving \eqref{3.38}.

Finally, to prove the upper bound in \eqref{3.41e} one can argue as follows. Introducing 
$\varphi_0 \in H^1_0((0,\pi))\backslash \{0\}$ via
\begin{equation}
\varphi_0(\theta) = \begin{cases}
0, & \theta \in [0, \pi/2], \\
\sin(2 \theta), & \theta \in [\pi/2,\pi],
\end{cases} 
\end{equation}
then,
\begin{align}
& \int_{\pi/2}^{\pi} d\theta \, [- \cos(\theta)] \sin^2(2 \theta) = 8/15,    \no \\
& \int_{\pi/2}^{\pi} d\theta \, \big\{4 \cos^2(2 \theta) + [(n-2)(n-4)/4] [\sin(\theta)]^{-2} 4 \sin^2(\theta) \cos^2(\theta)\big\}     \\
& \quad = \int_{\pi/2}^{\pi} d\theta \, \big[4 \cos^2(2 \theta) + (n-2)(n-4) \cos^2(\theta)\big] = \pi [4 + (n-2)(n-4)]/4,   \no
\end{align}
and hence (cf.\ \eqref{3.41a})
\begin{equation}
\gamma_{c,n}^{-1} \geq \f{32}{15 \pi [(n-2)(n-4) + 4]}, \quad n \geq 3, 
\end{equation}
completes the proof of \eqref{3.41e}. 
\end{proof}

\begin{remark} \lb{r3.3a} 
$(i)$ Since $H^1_0((0,\pi))$ embeds compactly into $L^2((0,\pi); d\theta)$, the supremum in \eqref{3.41a} (unlike that in \eqref{3.40a}) is actually attained, that is, for a particular $\varphi_n \in H^1_0((0,\pi)) \backslash\{0\}$, 
\begin{align}
\gamma_{c,n}^{-1} &= \int_{0}^{\pi} d\theta_{n-1} \, 
[- \cos(\theta_{n-1})] |\varphi_n(\theta_{n-1})|^{2}     \no \\
& \quad \times \bigg[\int_{0}^{\pi} d\theta_{n-1} \, |\varphi'(\theta_{n-1})|^{2}+[(n-2)(n-4)/4][\sin(\theta_{n-1})]^{-2} 
|\varphi_n(\theta_{n-1})|^{2}\bigg]^{-1}\Bigg\},      \no\\
&\hspace{9cm} \quad  n\geq 3.        \lb{3.57}
\end{align} 
However, since the $n$-dependence of $\varphi_n$ appears to be beyond our control, computing the exact value of $C_0$ in \eqref{3.38} remains elusive. \\[1mm]
$(ii)$ The differential equation underlying \eqref{3.41a} is of the type 
\begin{equation}
- y''(\theta) + [(n-2)(n-4)/4] [\sin(\theta)]^{-2} y(\theta) = - \gamma_{c,n} \cos(\theta) y(\theta), \quad \theta \in (0,\pi), 
\end{equation}
which naturally leads to the Birman--Schwinger-type eigenvalue problem 
\begin{align}
\Big(h_n^{-1/2} [- \cos(\theta)] h_n^{-1/2} v\Big)(\theta) 
= \lambda_n v(\theta), \quad v = h_n^{1/2} y, 
\end{align}
where $h_n$ denotes the Friedrichs extension of the preminimal operator $\dot h_{n, min}$ in $L^2((0,\pi); d\theta)$ defined by 
\begin{equation}
\big(\dot h_{n,min} g\big)(\theta) = - g''(\theta) + [(n-2)(n-4)/4] [\sin(\theta)]^{-2} g(\theta), \quad g \in C_0^{\infty}((0,\pi)).
\end{equation}
One observes that $\dot h_{n,min}$ is essentially self-adjoint for $n \geq 5$ and hence boundary conditions at $\theta = 0, \pi$, familiar for singular second-order differential operators of Bessel-type (see \cite[Subsection~6.1]{GLN20}), are only required for $n=3,4$. The Birman--Schwinger operator 
\begin{equation}
T_n = h_n^{-1/2} [- \cos(\theta)] h_n^{-1/2} 
\end{equation} 
in $L^2((0,\pi); d\theta)$ is compact (in fact, Hilbert--Schmidt) upon inspecting its integral kernel and hence by the 
Raleigh--Ritz quotient in \eqref{3.41a}, $\gamma_{c,n}^{-1}$ is the largest eigenvalue for $T_n$. Finally, introducing the unitary operator 
\begin{equation}
(U f)(\theta) = f(\pi - \theta), \quad \theta \in (0,\pi), \; f \in L^2((0,\pi); d\theta), 
\end{equation}
in $L^2((0,\pi); d\theta)$, one verifies that 
\begin{equation}
U T_n U^{-1} = - T_n, 
\end{equation}
and hence the spectrum of $T_n$ is symmetric with respect to the origin. 
\hfill $\diamond$
\end{remark} 
It is well-known that Hardy's inequality \eqref{1.1} is strict, that is, equality holds in \eqref{1.1} for some $f \in D^1(\bbR^n)$ if and only if $f=0$. More general results regarding strictness for weighted Hardy--Sobolev or Caffarelli--Kohn--Nirenberg inequalities based on variational techniques can be found, for instance, in \cite{CW01}, \cite{CC93}. Strictness in the case of the Hardy inequality was discussed in \cite{VZ00}. Thus, we next turn to strictness of inequality \eqref{3.2} on $H^1(\bbR^n)$ employing a quadratic form approach.

To set the stage we briefly recall a few facts on quadratic forms generated by symmetric operators $A$ bounded from below and the associated Friedrichs extension (to be denoted by $A_F$) of $A$. 

Let $A$ be a densely defined symmetric operator in the Hilbert space $\cH$ bounded from below, that is, 
$A \subseteq A^*$ and for some $c \in \bbR$, $A \ge c I_{\cH}$. Without loss of generality we put $c=0$ in the following. We denote by $\ol A$ the closure of $A$ 
in $\cH$, and introduce the associated forms in $\cH$, 
\begin{align}
& q_A(f,g) = (f, A g)_{\cH}, \quad f, g \in \dom(q_A) = \dom(A),   \lb{3.39} \\
& q_{\ol A}(f,g) = (f, {\ol A} g)_{\cH}, \quad f, g \in \dom(q_{\ol A}) = \dom({\ol A}),    \lb{3.40} 
\end{align} 
then the closures of $q_A$ and $q_{\ol A}$ coincide in $\cH$ (cf., e.g., \cite[Lemma~5.1.12]{BHS20}) 
\begin{equation}
\ol{q_A} = \ol{q_{\ol A}}    \lb{3.41}
\end{equation}
and the first representation theorem for forms (see, e.g., \cite[Theorem~4.2.4]{EE18}, 
\cite[Theorem~VI.2.1, Sect.~VI.2.3]{Ka95}) yields 
\begin{equation}
\ol{q_A} (f,g) = (f, A_F g)_{\cH}, \quad f \in \dom (\ol{q_A}), \; g \in \dom(A_F),     \lb{3.42}
\end{equation}
where $A_F \geq 0$ represents the self-adjoint Friedrichs extension of $A$. Due to the fact 
\eqref{3.41}, one infers (cf., e.g., \cite[Lemma~5.3.1]{BHS20})
\begin{equation}
A_F = (\ol A)_F.    \lb{3.43} 
\end{equation} 
The second representation theorem for forms (see, e.g., \cite[Theorem~4.2.8]{EE18}, 
\cite[Theorem~VI.2.123]{Ka95}) then yields the additional result
\begin{equation}
\ol{q_A} (f,g) = \big(A_F^{1/2} f, A_F^{1/2} g\big)_{\cH}, \quad 
 f, g \in \dom(\ol{q_A}) = \dom\big(A_F^{1/2}\big).
\end{equation} 
Moreover, one has the fact (see, e.g., \cite[Theorem~5.3.3]{BHS20}, \cite[Corollary~4.2.7]{EE18}, 
\cite[Theorem~10.17]{Sc12})
\begin{equation}
\dom(A_F) = \dom(\ol{q_A}) \cap \dom(A^*) = \dom\big(A_F^{1/2}\big) \cap \dom(A^*).   \lb{5.45}
\end{equation}

\begin{theorem} \lb{t3.5} 
Assume Hypothesis \ref{h2.2}. Then inequality \eqref{3.2} is strict on $H^1(\bbR^n)$, $n \geq 3$, that is, equality holds in \eqref{3.2} for some $f \in H^1(\bbR^n)$ if and only if $f = 0$. 
\end{theorem}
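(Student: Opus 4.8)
The plan is to convert ``equality in \eqref{3.2}'' into the assertion that $f$ is a zero eigenvector of one of the self-adjoint dipole Hamiltonians $H_{\gamma}$, and then to exclude this via the absence of point spectrum recorded in \eqref{3.2b}.

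First I would reduce the family of inequalities in \eqref{3.2} to a single case: the ``$-$'' sign with dipole direction $u$ coincides with the ``$+$'' sign with direction $-u$, and \eqref{3.2} holds for every unit vector, so it suffices to show that, for a fixed $\gamma \in [0,\gamma_{c,n}]$ and a fixed unit vector $v \in \bbR^n$, the relation
\[
\int_{\bbR^n} d^nx \, |(\nabla f)(x)|^2 = \gamma \int_{\bbR^n} d^nx \, (v,x) |x|^{-3} |f(x)|^2
\]
for $f \in H^1(\bbR^n)$ forces $f = 0$. If $\gamma = 0$ this says $\nabla f = 0$, hence $f = 0$ because the only constant in $L^2(\bbR^n)$ is $0$; so assume $\gamma \in (0,\gamma_{c,n}]$. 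Writing $q_{\gamma}$ for the quadratic form obtained by polarizing $(g, L_{\gamma} g)_{L^2(\bbR^n)}$ on $C_0^{\infty}(\bbR^n \backslash \{0\})$ with dipole direction $-v$ (so that $q_{\gamma}(g,g) = \int_{\bbR^n} d^nx \, |(\nabla g)(x)|^2 - \gamma \int_{\bbR^n} d^nx \, (v,x)|x|^{-3}|g(x)|^2$), the displayed relation says $q_{\gamma}(f,f) = 0$ in the naive sense, and by definition the closure $\overline{q_{\gamma}}$ is precisely the form of the Friedrichs extension $H_{\gamma}$ (with dipole direction $-v$), which satisfies $H_{\gamma} \geq 0$ and $\sigma(H_{\gamma}) = \sigma_{ac}(H_{\gamma}) = [0,\infty)$ by Theorem \ref{t3.1} and \eqref{3.2b} (or Lemma \ref{l2.5}).

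The next step is to locate $f$ inside $\dom(\overline{q_{\gamma}})$ and to identify $\overline{q_{\gamma}}(f,f)$ with the naive expression. Since $|(v,x)| \, |x|^{-1} \leq 1$, two applications of Hardy's inequality \eqref{1.1} give the bounds $|q_{\gamma}(g,h)| \leq \big(1 + 4\gamma (n-2)^{-2}\big)\|\nabla g\|_{L^2(\bbR^n)}\|\nabla h\|_{L^2(\bbR^n)}$ and $q_{\gamma}(g,g) + \|g\|_{L^2(\bbR^n)}^2 \leq \big(2 + 4\gamma (n-2)^{-2}\big)\|g\|_{H^1(\bbR^n)}^2$ for $g,h \in C_0^{\infty}(\bbR^n \backslash \{0\})$, with the form nonnegativity \eqref{3.24AA} making the left-hand side of the second bound a genuine (squared) norm. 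Because $\{0\}$ has vanishing $H^1$-capacity for $n \geq 3$, $C_0^{\infty}(\bbR^n \backslash \{0\})$ is dense in $H^1(\bbR^n)$; choosing $\{f_j\} \subset C_0^{\infty}(\bbR^n \backslash \{0\})$ with $f_j \to f$ in $H^1(\bbR^n)$, the two bounds show that $\{f_j\}$ is Cauchy in the form norm and that $q_{\gamma}(f_j,f_j) \to q_{\gamma}(f,f)$, whence $f \in \dom(\overline{q_{\gamma}})$ with $\overline{q_{\gamma}}(f,f) = q_{\gamma}(f,f) = 0$; in particular $H^1(\bbR^n) \subseteq \dom(\overline{q_{\gamma}})$.

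Finally, since $\overline{q_{\gamma}}$ is closed and nonnegative, Cauchy--Schwarz for forms gives $\overline{q_{\gamma}}(f,g) = 0$ for every $g \in \dom(\overline{q_{\gamma}})$, and the first representation theorem \eqref{3.42} yields $(f, H_{\gamma} g)_{L^2(\bbR^n)} = 0$ for all $g \in \dom(H_{\gamma})$; as $H_{\gamma}$ is self-adjoint, $f \perp \ran(H_{\gamma})$ means $f \in \ker(H_{\gamma})$, i.e.\ $H_{\gamma} f = 0$. But $\sigma(H_{\gamma}) = \sigma_{ac}(H_{\gamma})$ by \eqref{3.2b}, so $H_{\gamma}$ has no eigenvalues and therefore $f = 0$; the reverse implication is trivial. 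I expect the main obstacle to be the second step at the critical coupling $\gamma = \gamma_{c,n}$, where the dipole term is form-bounded relative to $-\Delta$ only with relative bound $1$ and the Friedrichs form domain of the lowest ($\ell = 0$) radial channel is a singular weighted Sobolev space rather than $H^1$; the point is that the Hardy estimates above, applied to the \emph{difference} $f_j - f$, are uniform in $\gamma \leq \gamma_{c,n}$ and still deliver $f \in \dom(\overline{q_{\gamma}})$, and one should verify, again via the capacity-zero density, that this Friedrichs form and the operator $H_{\gamma}$ of Theorem \ref{t3.1} are literally the same.
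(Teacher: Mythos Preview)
Your proof is correct and follows essentially the same strategy as the paper: show that $H^1(\bbR^n)$ embeds in the Friedrichs form domain (with the naive quadratic form value preserved), so that equality in \eqref{3.2} forces the closed form to vanish at $f$, and then conclude $f\in\ker(H_\gamma)=\{0\}$ from the purely absolutely continuous spectrum \eqref{3.2b}. The only cosmetic difference is that the paper invokes the second representation theorem to write $Q_\gamma(f,f)=\|H_\gamma^{1/2}f\|^2$ directly, whereas you reach $f\in\ker(H_\gamma)$ via Cauchy--Schwarz for the form together with the first representation theorem; both routes are standard and equivalent here.
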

\begin{proof}
We first discuss the simpler case $\gamma \in [0,\gamma_{c,n})$. In this case the inequality \eqref{3.2} implies that the sesquilinear form $\gamma \, q_u$, $u \in \bbR^n$, $|u| = 1$, where 
\begin{equation}
q_u (f,g) = \int_{\bbR^n} d^n x \, (u,x) |x|^{-3} \ol{f(x)} g(x), 
\quad f,g \in \dom(q_u) = H^1(\bbR^n),
\end{equation}
is bounded relative to the form $Q_{H_0}$ of the Laplacian $H_0 = - \Delta$ on $\dom(H_0) = H^2(\bbR^n)$, 
\begin{align}
\begin{split} 
Q_{H_0}(f,g) = (\nabla f, \nabla g)_{[L^2(\bbR^n)]^n} = \big(H_0^{1/2}f, H_0^{1/2} g\big)_{L^2(\bbR^n)},& \\ 
 f,g  \in \dom(Q_{H_0}) = H^1(\bbR^n),&
 \end{split} 
\end{align}
with relative bound strictly less than one. Hence the form
\begin{equation}
Q_{\gamma}(f,g) = Q_{H_0}(f,g) + \gamma \, q_u (f,g), \quad f,g \in \dom(Q_{\gamma}) = H^1(\bbR^n),
\; \gamma \in [0,\gamma_{c,n}), 
\end{equation}
 is densely defined, nonnegative, and closed. Moreover, since $C_0^{\infty}(\bbR^n \backslash \{0\})$ is dense in $H^1(\bbR^n)$ for $n \geq 2$ (cf., e.g., \cite[p.~33--35]{Fa75}), that is, $C_0^{\infty}(\bbR^n \backslash \{0\})$ is a core for $Q_{H_0}$ (equivalently, a form core for $H_0$), and hence also a core for $Q_{\gamma}$, 
\begin{equation}
\ol{Q_{\gamma}|_{C_0^{\infty}(\bbR^n \backslash \{0\})}} = Q_{\gamma}, \quad \gamma \in [0,\gamma_{c,n}).  
\end{equation} 
Thus, the self-adjoint, nonnegative operator $H_{Q_{\gamma}}$, uniquely associated with $Q_{\gamma}$ by the first representation theorem for forms coincides with the Friedrichs extension of the minimal operator associated with the differential expression $L_{\gamma}$ in \eqref{2.18},
\begin{equation}
H_{\gamma,min} = (- \Delta + V_{\gamma})|_{C_0^{\infty}(\bbR^n\backslash\{0\})} \geq 0, \quad \gamma \in [0,\gamma_{c,n}), 
\end{equation}
that is,
\begin{equation}
H_{Q_{\gamma}} = (H_{\gamma,min})_F \geq 0, \quad \gamma \in [0,\gamma_{c,n}). 
\end{equation}
In turn, since $H_{\gamma}$ coincides with the direct sum of Friedrichs extensions in \eqref{2.34}, one concludes that $H_{Q_{\gamma}}$ coincides with $H_{\gamma}$, and hence, 
\begin{equation}
H_{Q_{\gamma}} = H_{\gamma} = \big((- \Delta + V_{\gamma})|_{C_0^{\infty}(\bbR^n\backslash\{0\})}\big)_F,  
\quad \gamma \in [0,\gamma_{c,n}), 
\end{equation}
in particular, 
\begin{equation}
Q_{\gamma}(f,g) = \big(H_{\gamma}^{1/2} f, H_{\gamma}^{1/2} g\big)_{L^2(\bbR^n)}, \quad 
f,g \in \dom(Q_{\gamma}) = H^1(\bbR^n), \; \gamma \in [0,\gamma_{c,n}). 
\end{equation} 
Thus, equality in the inequality \eqref{3.2} for some $f_0 \in H^1(\bbR^n)$ implies 
\begin{align} 
0 &= \int_{\bbR^n} d^n x \, |(\nabla f_0)(x)|^2 + \gamma \int_{\bbR^n} d^n x \, (u,x) |x|^{-3} |f_0(x)|^2    \\
& = Q_{H_0}(f_0,f_0) + \gamma \, q_u (f_0,f_0) = Q_{\gamma}(f_0,f_0) = \big\|H_{\gamma}^{1/2} f_0 \big\|_{L^2(\bbR^n)}^2, \quad \gamma \in [0,\gamma_{c,n}),   \no 
\end{align}
and hence,
\begin{equation}
f_0 \in \ker\big(H_{\gamma}^{1/2}\big) = \ker (H_{\gamma}) = \{0\}, \quad \gamma \in [0,\gamma_{c,n}),  
\end{equation}
by \eqref{3.2b}. 

The case $\gamma = \gamma_{c,n}$ follows analogous lines but is a bit more involved as now the form 
$\gamma_{c,n} \, q_u$ is bounded relative to the form $Q_{H_0}$ with relative bound equal to one. 

Since by inequality \eqref{3.2} 
\begin{equation}
H_{\gamma_{c,n},min} = (- \Delta + V_{\gamma_{c,n}})|_{C_0^{\infty}(\bbR^n\backslash\{0\})} \geq 0,
\end{equation}
the form
\begin{align}
\dot Q_{\gamma_{c,n}}(f,g) &= (f, H_{\gamma_{c,n},min} g)_{L^2(\bbR^n)}    \\
&= Q_{H_0}(f,g) + \gamma_{c,n} q_u (f,g), \quad f,g \in \dom\big(\dot Q_{\gamma_{c,n}}\big) = C_0^{\infty}(\bbR^n \backslash \{0\}),   \no 
\end{align}
is closable and we denote its closure in $L^2(\bbR^n)$ by $Q_{\gamma_{c,n}}$. Thus,  
$Q_{\gamma_{c,n}} \geq 0$ and hence the self-adjoint, nonnegative operator $H_{Q_{\gamma_{c,n}}}$ uniquely associated with $Q_{\gamma_{c,n}}$ in $L^2(\bbR^n)$ is the Friedrichs extension of 
$H_{\gamma_{c,n},min}$,  
\begin{equation}
H_{Q_{\gamma_{c,n}}} = (H_{\gamma_{c,n},min})_F \geq 0.
\end{equation}
Again, by the second representation theorem 
\begin{equation}
Q_{\gamma_{c,n}} (f,g) = 
\big(H_{Q_{\gamma_{c,n}}}^{1/2} f, H_{Q_{\gamma_{c,n}}}^{1/2} g\big)_{L^2(\bbR^n)}, 
\quad f,g \in \dom(Q_{\gamma_{c,n}}) = \dom\big(H_{Q_{\gamma_{c,n}}}^{1/2}\big). 
\end{equation}
However, unlike in the case $\gamma \in [0,\gamma_{c,n})$, since the form 
$\gamma_{c,n} \, q_u$ is bounded relative to the form $Q_{H_0}$ with relative bound equal to one, one now has possible cancellations between the forms $Q_{H_0}$ and $\gamma_{c,n} \, q_u$ and hence concludes that 
\begin{equation}
 \dom(Q_{\gamma_{c,n}}) = \dom\big(H_{Q_{\gamma_{c,n}}}^{1/2}\big) \supseteq H^1(\bbR^n)
\end{equation}
(see also Remark \ref{r3.4}). The rest of the proof now follows the case $\gamma \in [0,\gamma_{c,n})$ line by line, in particular,
\begin{equation}
H_{Q_{\gamma_{c,n}}} = H_{\gamma_{c,n}} 
= \big((- \Delta + V_{\gamma_{c,n}})|_{C_0^{\infty}(\bbR^n\backslash\{0\})}\big)_F,  
\end{equation}
and 
\begin{equation}
\ker\big(H_{\gamma_{c,n}}^{1/2}\big) = \ker (H_{\gamma_{c,n}}) = \{0\},  
\end{equation} 
again by \eqref{3.2b}, then proves strictness of \eqref{3.2} also for $\gamma = \gamma_{c,n}$.   
\end{proof}

\begin{remark} \lb{r3.4} 
We briefly illustrate the possibility of cancellations in $H_{\gamma_{c,n}}$. Let 
$\psi_{\gamma,0} = \psi_{\gamma,0}(\theta_{n-1})$, $\gamma \geq 0$, be the unique (up to constant 
multiples) eigenfunction of 
$\Lambda_{\gamma,n}$ corresponding to its lowest eigenvalue $\lambda_{\gamma,n,0}$, see \eqref{3.14}, 
and introduce 
\begin{equation}
\Psi_{\gamma,0} (x) = |x|^{-(n-2)/2} \psi_{\gamma,0}(\theta_{n-1}), \quad x \in \bbR^n \backslash \{0\}, 
\; \gamma \geq 0. 
\end{equation} 
Then an elementary computation reveals that 
\begin{equation}
L_{\gamma} \Psi_{\gamma,0} = \big\{\big[(n-2)^2/4\big] + \lambda_{\gamma,n,0}\big\} 
|x|^{- (n+2)/2} \psi_{\gamma,0}(\theta_{n-1}), 
\end{equation}
in the sense of distributions. In particular, if $\gamma = \gamma_{c,n}$, and hence, 
$\lambda_{\gamma_{c,n},n,0} = - (n-2)^2/4$, one obtains 
\begin{equation}
L_{\gamma_{c,n}} \Psi_{\gamma_{c,n},0} = 0,
\end{equation} 
in the distributional sense. Thus, introducing 
\begin{equation}
f_0(x) = |x|^{-(n-2)/2} \psi_{\gamma_{c,n},0}(\theta_{n-1}) \phi(|x|), \quad x \in \bbR^n \backslash \{0\}, 
\end{equation}
one concludes that 
\begin{equation}
f_0 \in \dom(H_{\gamma_{c,n}}) \subset \dom\big(H_{\gamma_{c,n}}^{1/2}\big).  
\end{equation}
However, since $(\partial f_0/\partial r) \notin L^2(\bbR^n)$, the elementary fact  
\begin{equation}
\bigg|\bigg(\f{\partial f_0}{\partial r}\bigg)(x)\bigg| = \bigg|\f{x}{|x|} \cdot (\nabla f_0)(x)\bigg| 
\leq |(\nabla f_0)(x)|
\end{equation} 
implies that 
\begin{equation}
f_0 \notin H^1(\bbR^n), \; (u,x) |x|^{-3} |f_0|^2 \notin L^1(\bbR^n), 
\end{equation}
illustrating possible cancellations between $H_0$ and $\gamma_{c,n} (u,x) |x|^{-3}$. 
\hfill $\diamond$ 
\end{remark}

\begin{remark} \lb{r3.5} 
Next, we briefly discuss the remaining case $n=2$. In this situation, the Laplace--Beltrami operator 
$- \Delta_{\bbS^1}$ in $L^2\big(\bbS^1\big)$ can be characterized by 
\begin{align}
& (- \Delta_{\bbS^1} f)(\theta_1) = - f''(\theta_1),    \quad \theta_1 \in (0,2\pi),   \no \\
& f \in \dom(- \Delta_{\bbS^1}) = \big\{g \in L^2((0,2\pi);d\theta_1) \, \big| \, g, g' \in AC([0,2\pi]); \\ 
& \hspace*{3.1cm} g(0) = g(2 \pi), \, g'(0) = g'(2 \pi); \, g'' \in L^2((0,2\pi);d\theta_1)\big\},   \no 
\end{align}
with
\begin{align}
& \; \sigma(- \Delta_{\bbS^1}) = \big\{\ell^2\big\}_{\ell \in \bbN_0},    \\
& - \Delta_{\bbS^1} e^{\pm i \ell \theta_1} = \ell^2 e^{\pm i \ell \theta_1}, \quad \theta_1 \in (0,2\pi), \; 
\ell \in \bbN_0.
\end{align}
The resulting Mathieu operator $\Lambda_{\gamma,2}$ in $L^2((0,2\pi); d\theta_1)$ (cf.\ \eqref{2.20}), of the form
\begin{equation}
\Lambda_{\gamma,2} = -\f{d^2}{d \theta_1^2} + \gamma\cos(\theta_{1}), \quad 
\dom(\Lambda_{\gamma,2}) = \dom(-\Delta_{\bbS^1}),
\end{equation}
has extensively been studied in the literature, see, for instance \cite[Ch.~2]{MS54}. More generally, the least periodic eigenvalue of Hill operators (i.e., situations where $\cos(\theta_1)$ is replaced by a $2\pi$-periodic, locally integrable potential $q(\theta_1)$) has received enormous attention, see for instance, 
\cite{Bl63}, \cite{GGS92}, \cite{Ka52}, \cite{Mo56/57}, \cite{Pu51}, \cite{RS16}, \cite{St79}, \cite{Un61}, and \cite{Wi51}. Applied to the Mathieu operator $\Lambda_{\gamma,2}$ at hand, the results obtained 
(cf.\ the discussion in \cite{GGS92}) imply, 
\begin{align}
\begin{split}
&  - \gamma^2\big/\big[8 \pi^2\big] < \lambda_{\gamma,2,0} < 0, \quad \gamma \in (0,\infty), \lb{3.25}  \\
& \quad \text{there exists $c_0 \in (0,\infty)$ such that } \, \lambda_{\gamma,2,0} \leq - c_0 \gamma^2, 
\quad \gamma \in [0,1]. 
\end{split} 
\end{align}
In particular, this proves the absence of a critical coupling constant $0 < \gamma_{c,2}$ for $n=2$ (equivalently, the critical constant in two dimensions equals zero, $\gamma_{c,2}=0$), explaining why we had to limit ourselves to $n \geq 3$ in the bulk of this paper. \hfill $\diamond$
\end{remark}

\begin{remark} \lb{r3.6} 
While thus far we focused primarily on lower semiboundedness of $H_{\gamma}$, the direct sum considerations in Section \ref{s2} equally apply to essential self-adjointness of 
$H_{\gamma}|_{C_0^{\infty}(\bbR^n \backslash \{0\})}$. Indeed, returning to the operator in \eqref{2.27}, one notes that 
\begin{equation}
\bigg[- \f{d^2}{dr^2} + \f{c}{r^2}\bigg]\bigg|_{C_0^{\infty}((0,\infty))} \, \text{ is essentially self-adjoint 
if and only if $c \geq 3/4$.}    \lb{3.51}
\end{equation}
The criterion \eqref{3.51} combined with \eqref{2.34}, \eqref{2.35} thus implies that 
\begin{align}
\begin{split} 
& H_{\gamma}|_{C_0^{\infty}(\bbR^n \backslash \{0\})} \, \text{ is essentially self-adjoint} \\ 
& \quad \text{if and only if } \, \lambda_{\gamma,n,0} \geq - n(n-4)/4.     \lb{3.52}
\end{split}
\end{align} 
${}$ \hfill $\diamond$ 
\end{remark}

\section{A Numerical Approach} \lb{s4}

Having verified the existence and uniqueness of critical dipole moments $\gamma_{c,n}$ for all dimensions $n\geq 3$, and having shown some of the properties of $\lambda_{\gamma,n,0}$, this section is devoted to a description of a numerical method for computing $\gamma_{c,n}$, in analogy to the Legendre expansion in \cite{CG07}.  

To set up the numerical algorithm one can argue as follows: Given \eqref{3.14}, we are interested in solving this eigenvalue problem in the particular scenario where $\gamma$ ranges from $0$ to 
$\gamma_{c,n}$, observing that $\lambda_{\gamma_{c,n},n,0} = - (n-2)^2/4$ (cf.\ \eqref{2.31}). Restricting 
$- \Delta_{\bbS^{n-1}}$ in \eqref{A.16} to $\cL^{n}$ as in \eqref{3.3}--\eqref{3.5}, \eqref{3.14} reduces to solving the eigenvalue problem associated with $\Lambda_{\gamma,\cL^{n}}$ in 
$L^2\big((0,\pi); [\sin(\theta_{n-1})]^{n-2} d\theta_{n-1}\big)$ of the type,  
\begin{align}
\begin{split} 
& -\f{d^{2}\Psi_{\gamma} (\theta_{n-1})}{d\theta_{n-1}^{2}}-(n-2)\cot(\theta_{n-1})\f{d\Psi_{\gamma}(\theta_{n-1})}{d\theta_{n-1}} +
\gamma \cos(\theta_{n-1}) \Psi_{\gamma} (\theta_{n-1})    \\
& \quad = \lambda_{\gamma,n,0} \Psi_{\gamma}(\theta_{n-1}), \quad \gamma \in (0,\gamma_{c,n}], \; 
\theta_{n-1} \in (0,\pi). 
	\lb{4.1}
\end{split} 
\end{align}
Expanding $\Psi_{\gamma} (\, \cdot \,)$ in normalized Gegenbauer polynomials, one obtains
\begin{align}
& \Psi_{\gamma} (\theta_{n-1}) =\sum\limits_{\ell=0}^{\infty} d_{\ell}(\gamma) \bigg[\f{\ell !(2\ell+n-2)}{2^{4-n}\pi\Gamma(\ell+n-2)}\bigg]^{1/2}\Gamma((n-2)/2) C_{\ell}^{(n-2)/2}(\cos(\theta_{n-1})),   \no \\  
& \hspace*{9cm} \theta_{n-1} \in (0,\pi), \lb{4.2}
\end{align}
where $d_{\ell}(\gamma)$ are appropriate expansion coefficients.  Since the Gegenbauer polynomial 
$C_{\ell}^{(n-2)/2}(\cos(\dott))$ is an eigenfunction of $- \Delta_{\bbS^{n-1}}$ corresponding to the eigenvalue $\ell(\ell+n-2)$, \eqref{4.1} becomes
\begin{align}
\begin{split} 
& \sum\limits_{\ell=0}^{\infty} [\ell(\ell+n-2)+\gamma \cos(\theta_{n-1}) 
- \lambda_{\gamma,n,0}] \, d_{\ell}(\gamma) \bigg[\f{\ell !(2\ell+n-2)}{2^{4-n}\pi\Gamma(\ell+n-2)}\bigg]^{1/2} \\
& \hspace*{5.5mm} \times \Gamma((n-2)/2) C_{\ell}^{(n-2)/2}(\cos(\theta_{n-1})) = 0. 
	\lb{4.3}
	\end{split} 
\end{align}
Next, we will exploit the following recurrence relation of Gegenbauer polynomials, 
\begin{align}
& \cos(\theta_{n-1})C_{\ell}^{(n-2)/2}(\cos(\theta_{n-1}))   \no \\
& \quad =\f{\ell+1}{2\ell+n-2}\left(C_{\ell+1}^{(n-2)/2}(\cos(\theta_{n-1}))+\f{\ell+n-3}{\ell+1}
C_{\ell-1}^{(n-2)/2}(\cos(\theta_{n-1}))\right),   \no\\
& \hspace*{9.6cm} \ell \in\bbN_{0} 
	\lb{4.4}  
\end{align}
(with $C_{-1}^{(n-2)/2}(\,\cdot\,)\equiv 0$) to expand the term $\gamma \cos(\theta_{n-1})$. For the $(\ell-1)$-term, one infers 
\begin{align}
& \gamma \cos(\theta_{n-1})d_{\ell-1}(\gamma) 
\bigg[\f{(\ell-1)!(2\ell+n-4))^2}{2^{4-n}\pi\Gamma(\ell+n-3)}\bigg]^{1/2}
\Gamma((n-2)/2) C_{\ell-1}^{(n-2)/2}(\cos(\theta_{n-1}))    \no \\
& \quad =\gamma d_{\ell-1}(\gamma) \bigg[\f{(\ell-1)!(2\ell+n-4)}{2^{4-n}\pi\Gamma(\ell+n-3)}\bigg]^{1/2} 
\f{\ell\, \Gamma((n-2)/2)}{2\ell+n-4} C_{\ell}^{(n-2)/2}(\cos(\theta_{n-1})),  \no \\
& \hspace*{9.9cm} \ell \in\bbN_{0}
 	\lb{4.5}
\end{align}
(where $d_{-1}(\gamma) = 0$) and for the $(\ell+1)$-term, one obtains 
\begin{align}
& \gamma \cos(\theta_{n-1})d_{\ell+1}(\gamma) \bigg[\f{(\ell+1)!(2\ell+n)}{2^{4-n}\pi\Gamma(\ell+n-1)}\bigg]^{1/2} \Gamma((n-2)/2) C_{\ell+1}^{(n-2)/2}(\cos(\theta_{n-1}))     \no\\
& \quad =\gamma d_{\ell+1}(\gamma) \bigg[\f{(\ell+1)!(2\ell+n)}{2^{4-n}\pi\Gamma(\ell+n-1)}\bigg]^{1/2}
\Gamma((n-2)/2) \f{\ell+n-2}{2\ell+n} C_{\ell}^{(n-2)/2}(\cos(\theta_{n-1})), \no \\
& \hspace*{10cm} \ell \in\bbN_{0}.
 	\lb{4.6}
\end{align}
The $\ell$-term maintains its form 
\begin{align}
& [\ell(\ell+n-2)-\lambda_{\gamma,n,0}] \, 
d_{\ell}(\gamma) \bigg[\f{\ell!(2\ell+n-2)}{2^{4-n}\pi\Gamma(\ell+n-2)}\bigg]^{1/2}
\Gamma((n-2)/2)   \no \\
& \quad \times C_{\ell}^{(n-2)/2}(\cos(\theta_{n-1})), \quad \ell \in\bbN_{0},
	\lb{4.7}
\end{align}
so one can divide all terms by the normalizing factor from \eqref{4.2} (since the orthogonality of the Gegenbauer polynomials mandates that every term under the sum in \eqref{4.3} individually vanishes), obtaining
\begin{align}
& \sum\limits_{\ell=0}^{\infty}\bigg\{[\ell(\ell+n-2)-\lambda_{\gamma,n,0}] d_{\ell}(\gamma)    \no \\
& \hspace*{7mm} 
+ \gamma \bigg(\bigg[\f{(2\ell+n-4)(\ell+n-3)}{\ell(2\ell+n-2)}\bigg]^{1/2}\,\f{\ell}{2\ell+n-4}d_{\ell-1}(\gamma)    	\lb{4.8} \\
& \hspace*{7mm} 
+\bigg[\f{(\ell+1)(2\ell+n)}{(2\ell+n-2)(\ell+n-2)}\bigg]^{1/2} \f{\ell+n-2}{2\ell+n}\,d_{\ell+1}(\gamma)\bigg)\bigg\}
C_{\ell}^{(n-2)/2}(\cos(\theta_{n-1}))=0.    \no 
\end{align}
Setting each coefficient equal to zero results in 
\begin{align}
\begin{split} 
& [\ell(\ell+n-2) - \lambda_{\gamma,n,0}] d_{\ell}(\gamma) 
+ \gamma \bigg(\bigg[\f{\ell(\ell+n-3)}{(2\ell+n-4)(2\ell+n-2)}\bigg]^{1/2}d_{\ell-1}(\gamma)    \\
& \quad + \bigg[\f{(\ell+1)(\ell+n-2)}{(2\ell+n-2)(2\ell+n)}\bigg]^{1/2} d_{\ell+1}(\gamma)\bigg)=0, \quad 
 \ell \in \bbN_{0}, 	\lb{4.9}
 \end{split} 
\end{align}
which one can rewrite as
\begin{align}
&\bigg[\f{(\ell+1)(\ell+n-2)}{(2\ell+n-2)(2\ell+n)}\bigg]^{1/2}d_{\ell+1}(\gamma)  
+ \bigg[\f{\ell(\ell+n-3)}{(2\ell+n-4)(2\ell+n-2)}\bigg]^{1/2} d_{\ell-1}(\gamma)     \no\\
&\quad = - \f{1}{\gamma} [\ell(\ell+n-2) - \lambda_{\gamma,n,0}] \,d_{\ell}(\gamma), 
\quad \gamma \in (0, \gamma_{c,n}], \; \ell \in \bbN_{0}. 
	\lb{4.10}
\end{align}
Equation \eqref{4.10} can be expressed as the generalized Jacobi operator eigenvalue problem in $\ell^{2}(\bbN_{0};w)$, 
\begin{align}
Jd(\gamma)=-\f{1}{\gamma}w(\gamma)d(\gamma), \quad \gamma \in (0, \gamma_{c,n}],    \lb{4.11}
\end{align}
where
\begin{align}
Jd(\gamma)=\begin{cases}a_{\ell+1}d_{\ell+1}(\gamma) + a_{\ell}d_{\ell-1}(\gamma), 
&\ell\in\bbN,\\a_{1}d_{1}(\gamma), &\ell=0,\end{cases} \quad 
w(\gamma)d(\gamma)= \big(w_{\ell}(\gamma) d_{\ell}(\gamma)\big)_{\ell\in\bbN_{0}},
	\lb{4.12}
\end{align}
and 
\begin{align}
\begin{split} 
& a_{\ell}=\bigg[\f{\ell(\ell+n-3)}{(2\ell+n-4)(2\ell+n-2)}\bigg]^{1/2}, 
\quad w_{\ell}(\gamma) = \ell(\ell+n-2) - \lambda_{\gamma,n,0},  \quad \ell \in \bbN_{0}.    \lb{4.13} 
\end{split} 
\end{align}
(One observes that $w_0(\gamma) \underset{\gamma \downarrow 0}{\longrightarrow} 0$ by \eqref{2.41}.) 

Explicitly, \eqref{4.12} yields the self-adjoint Jacobi operator $J$ in $\ell^2(\bbN_0;w)$ represented as a semi-infinite matrix with respect to the standard Kronecker-$\delta$ basis  
\begin{align}
Jd(\gamma)&=\begin{pmatrix}
0&a_{1}&0&\ldots&&&\\
a_{1}&0&a_{2}&0&\ldots&&\\
0&a_{2}&0&a_{3}&0&\ldots&\\
\vdots&0&a_{3}&0&a_{4}&0&\ldots\\
&\vdots&0&\ddots&\ddots&\ddots&\\
\end{pmatrix}
\begin{pmatrix}
d_{0}(\gamma) \\
d_{1}(\gamma) \\
d_{2}(\gamma) \\
\vdots\\
\phantom{\vdots}\\
\end{pmatrix}      \no \\
&= -\f{1}{\gamma}\begin{pmatrix}
w_{0}(\gamma) & 0& \ldots &&&\\
0& w_{1}(\gamma) & 0 & \ldots &&\\
\vdots& 0 & w_{2}(\gamma) & 0 & \ldots &\\
& \vdots & 0 & w_{3}(\gamma) & 0 & \ldots \\
& & \vdots & \ddots & \ddots & \ddots\\
\end{pmatrix}
\begin{pmatrix}
d_{0}(\gamma) \\
d_{1}(\gamma) \\
d_{2}(\gamma) \\
\vdots\\
\phantom{\vdots}\vspace{1mm}\\
\end{pmatrix}    \no \\
&=-\f{1}{\gamma}w(\gamma) d(\gamma), \quad \gamma \in (0,\gamma_{c,n}]. 
	\lb{4.14}
\end{align}

One would like to calculate $\gamma_{c,n}$ approximately using finite truncations of the matrix representation of $J$ in the first line of \eqref{4.14} - the feasibility of truncations will be made precise below.  In order for these approximants to converge, a transformation to a compact Jacobi operator becomes necessary. For this purpose one introduces the operator
\begin{align}
V(\gamma) =
\begin{cases}
\ell^{2}(\bbN_{0};w(\gamma))\to\ell^{2}(\bbN_{0}),\\
b \mapsto w^{1/2}(\gamma) b.
\end{cases}
	\lb{4.15}
\end{align}
That $V(\gamma)$ is unitary may be seen from
\begin{align}
||V(\gamma) b||_{\ell^{2}(\bbN_{0})}=||w^{1/2}(\gamma) b||_{\ell^{2}(\bbN_{0})}
=||b||_{\ell^{2}(\bbN_{0};w(\gamma))},\quad b\in\ell^{2}(\bbN_{0};w(\gamma)),
	\lb{4.16}
\end{align}
and the fact that $V(\gamma)$ is surjective and defined on all of $\ell^{2}(\bbN_{0};w)$.  Next, one transforms \eqref{4.14} into
\begin{equation}
V(\gamma)^{-1} J V(\gamma)^{-1} V(\gamma) d = - \f{1}{\gamma}V(\gamma)d(\gamma),   
\quad d(\gamma)\in\ell^{2}(\bbN_{0};w(\gamma)),
	\lb{4.17}
\end{equation}
which can equivalently be expressed as
\begin{equation}
w(\gamma)^{-1/2} J w(\gamma)^{-1/2} \wti d(\gamma) 
= - \f{1}{\gamma} \wti d(\gamma), \quad \wti d(\gamma) = V(\gamma) d(\gamma) \in\ell^{2}(\bbN_{0}).
	\lb{4.18}
\end{equation}
Introducing 
\begin{equation} 
K(\gamma)=w(\gamma)^{-1/2} J w(\gamma)^{-1/2} \in \cB\big(\ell^2(\bbN_0)\big), 
\quad \gamma \in (0,\gamma_{c,n}],     \lb{4.18a}
\end{equation}  
one can write \eqref{4.18} in the form
\begin{align}
\begin{split} 
K(\gamma) \wti d(\gamma) &= \begin{pmatrix}
0 &\wti a_{1}(\gamma) & 0 & \ldots & & & \\
\wti a_{1}(\gamma) & 0 & \wti a_{2}(\gamma) & 0 & \ldots & & \\
0 & \wti a_{2}(\gamma) & 0 & \wti a_{3}(\gamma) & 0 & \ldots & \\
\vdots & 0 & \wti a_{3}(\gamma) & 0 & \wti a_{4}(\gamma) & 0 & \ldots \\
& \vdots & 0 & \ddots & \ddots & \ddots & \\
\end{pmatrix}
\begin{pmatrix}
\wti d_{0}(\gamma) \\
\wti d_{1}(\gamma) \\
\wti d_{2}(\gamma) \\
\vdots\\
\phantom{\vdots}\\
\end{pmatrix}   \\ 
& =-\f{1}{\gamma}
\begin{pmatrix}
\wti d_{0}(\gamma) \\
\wti d_{1}(\gamma) \\
\wti d_{2}(\gamma) \\
\vdots\\
\phantom{\vdots}\vspace{1mm}\\
\end{pmatrix}  
= - \f{1}{\gamma} \wti d(\gamma), \quad \gamma \in (0,\gamma_{c,n}],      \lb{4.19} 
\end{split} 
\end{align}
in analogy with \eqref{4.14}, with
\begin{align}
& \,\, \wti a_{\ell}(\gamma) = w_{\ell}(\gamma)^{-1/2} a_{\ell}\,w_{\ell-1}(\gamma)^{-1/2}\no\\
& \,\,\, \qquad= [\ell(\ell+n-2) - \lambda_{\gamma,n,0}]^{-1/2} [(\ell-1)(\ell+n-3) - \lambda_{\gamma,n,0}]^{-1/2} \no\\
& \,\, \, \quad \qquad\times\bigg[\f{\ell(\ell+n-3)}{(2\ell+n-4)(2\ell+n-2)}\bigg]^{1/2}, \quad \ell \in \bbN,	\lb{4.20}  \\
&\wti a_{\ell}(\gamma) \underset{\ell \to \infty}{=} \Oh\big(\ell^{-2}\big).
	\lb{4.21}
\end{align}

Recalling once more that $\lambda_{\gamma_{c,n},n,0} = - (n-2)^2/4$ (cf.\ \eqref{2.31}), and that 
$\lambda_{\gamma,n,0}$ as well as $\Psi_{\gamma}(\dott)$ in \eqref{4.1} are analytic with respect to 
$\gamma$ as $\gamma$ varies in a complex neighborhood of $[0,\infty)$, taking the limit 
$\gamma \uparrow \gamma_{c,n}$ on either side of \eqref{4.19} yields 
\begin{equation}
K(\gamma_{c,n}) \wti d(\gamma_{c,n}) = - \gamma_{c,n}^{-1} \, \wti d(\gamma_{c,n}), 
\end{equation}
implying 
\begin{equation}
- \gamma_{c,n}^{-1} \in \sigma_p(K(\gamma_{c,n})).
\end{equation}

Next, we prove that $- \gamma_{c,n}^{-1}$ is the smallest (negative) eigenvalue of the operator $K(\gamma_{c,n})$ in $\ell^{2}(\bbN_{0})$.

\begin{proposition}	\lb{p4.1}
One has $K(\gamma)\in\cB_{\infty} \big(\ell^{2}(\bbN_{0})\big)$, $\gamma \in (0,\gamma_{c,n}]$, 
and
\begin{equation}
\gamma K(\gamma) \geq - I_{\ell^2(\bbN_0)}, \quad \gamma \in (0,\gamma_{c,n}].      \lb{4.21a} 
\end{equation} 
Moreover, 
\begin{align}
||K(\gamma_{c,n})||_{\cB(\ell^{2}(\bbN_{0}))} \leq 
\begin{cases}
8\big/\big[3^{3/2}\big],\quad& n=3,\\
2\max\{\wti a_{\ell_{r,n}}(\gamma_{c,n}),\wti a_{\ell_{r,n}-1}(\gamma_{c,n})\} ,\quad& n\geq 4,
\end{cases}
	\lb{4.22}
\end{align}
where 
\begin{equation}
r(n) = \f{1}{4}\left(6-2n + \big\{2 \big[26-18n+3n^{2}\big]\big\}^{1/2}\right),  \quad \ell_{r,n}=\lceil r(n)\rceil,    \lb{4.23} 
\end{equation}
with $\lceil x\rceil=\inf\{m\in\bbN_{0}\,|\,m\geq x\}$, the ceiling function.
\end{proposition}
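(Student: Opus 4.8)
The plan is to establish the three assertions in turn, the crux being an identification of $K(\gamma)$ with a similarity transform of the bounded-below operator $\Lambda_{\gamma,\cL^{n}}-\lambda_{\gamma,n,0}$. \emph{Compactness.} For $\gamma\in(0,\gamma_{c,n}]$ one has $w_{\ell}(\gamma)=\ell(\ell+n-2)-\lambda_{\gamma,n,0}\ge w_{0}(\gamma)=-\lambda_{\gamma,n,0}>0$ by the strict negativity $\lambda_{\gamma,n,0}<0$ from Theorem~\ref{t3.1}, while $w_{\ell}(\gamma)\to\infty$ as $\ell\to\infty$; hence the diagonal multiplication operator $w(\gamma)^{-1/2}$ is compact on $\ell^{2}(\bbN_0)$. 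Since the Jacobi matrix $J$ of \eqref{4.13} is bounded (its off-diagonal entries $a_{\ell}$ are bounded and converge to $1/2$, as $a_{\ell}^{2}=\ell(\ell+n-3)\big/[4\ell(\ell+n-3)+(n-2)(n-4)]$), the factorization $K(\gamma)=w(\gamma)^{-1/2}Jw(\gamma)^{-1/2}$ of \eqref{4.18a} exhibits $K(\gamma)$ as a compact operator sandwiched between bounded ones, whence $K(\gamma)\in\cB_{\infty}(\ell^{2}(\bbN_0))$; alternatively, $K(\gamma)$ is tridiagonal with off-diagonal entries $\wti a_{\ell}(\gamma)=\Oh(\ell^{-2})$ by \eqref{4.21}, hence Hilbert--Schmidt.

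\emph{The inequality $\gamma K(\gamma)\ge-I$.} Recall from the derivation of \eqref{4.1}--\eqref{4.14} that, in the orthonormal eigenbasis $\{Y_{(\ell,0,\ldots,0)}\}_{\ell\in\bbN_0}$ of the restriction of $-\Delta_{\bbS^{n-1}}$ to $\cL^{n}$ (cf.\ \eqref{3.3}, \eqref{2.4}), the operator $\Lambda_{\gamma,\cL^{n}}=-\Delta_{\bbS^{n-1}}|_{\cL^{n}}+\gamma\cos(\theta_{n-1})$ acts on finitely supported coefficient vectors $d$ precisely as the matrix $D+\gamma J$, where $D=\diag(\ell(\ell+n-2))_{\ell\in\bbN_0}$ and the tridiagonal part $\gamma J$ comes from the Gegenbauer recurrence \eqref{4.4} after the normalization leading to \eqref{4.13}. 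Since $\lambda_{\gamma,n,0}=\inf\sigma(\Lambda_{\gamma,\cL^{n}})$ — recall from the proof of Theorem~\ref{t3.1} that $\lambda_{\gamma,n,0}$ is the lowest eigenvalue of $\Lambda_{\gamma,\cL^{n}}$ — the bound $\Lambda_{\gamma,\cL^{n}}\ge\lambda_{\gamma,n,0}$ gives $\big((\gamma J+w(\gamma))d,d\big)_{\ell^{2}(\bbN_0)}\ge0$ for every finitely supported $d\in\ell^{2}(\bbN_0)$, where $w(\gamma)=\diag(w_{\ell}(\gamma))_{\ell\in\bbN_0}=D-\lambda_{\gamma,n,0}I$. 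Applying this to $d=w(\gamma)^{-1/2}e$ with $e\in\ell^{2}(\bbN_0)$ finitely supported, and using self-adjointness of the diagonal operator $w(\gamma)^{-1/2}$, one obtains
\[
\gamma\,\big(K(\gamma)e,e\big)_{\ell^{2}(\bbN_0)}+\|e\|_{\ell^{2}(\bbN_0)}^{2}=\big((\gamma J+w(\gamma))d,d\big)_{\ell^{2}(\bbN_0)}\ge0,
\]
and since $K(\gamma)$ is bounded this extends to all $e\in\ell^{2}(\bbN_0)$ by density, proving \eqref{4.21a}. Restricting to finitely supported sequences here sidesteps any discussion of the domains of the transformed operators, or of the weighted space $\ell^{2}(\bbN_0;w(\gamma))$.

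\emph{The norm bound at $\gamma=\gamma_{c,n}$.} By \eqref{2.31}, $\lambda_{\gamma_{c,n},n,0}=-(n-2)^{2}/4$, so $w_{\ell}(\gamma_{c,n})=(\ell+(n-2)/2)^{2}$ and \eqref{4.20} reduces to
\[
\wti a_{\ell}(\gamma_{c,n})=\f{1}{2}\,\f{[\ell(\ell+n-3)]^{1/2}}{\big[(\ell+(n-2)/2)(\ell+(n-4)/2)\big]^{3/2}},\quad\ell\in\bbN.
\]
As $K(\gamma_{c,n})$ is tridiagonal with vanishing diagonal and off-diagonal entries $\wti a_{\ell}(\gamma_{c,n})$, the Schur test gives $\|K(\gamma_{c,n})\|_{\cB(\ell^{2}(\bbN_0))}\le\sup_{\ell\in\bbN}\big(\wti a_{\ell}(\gamma_{c,n})+\wti a_{\ell+1}(\gamma_{c,n})\big)\le2\sup_{\ell\in\bbN}\wti a_{\ell}(\gamma_{c,n})$, so it remains to locate the maximum of $\ell\mapsto\wti a_{\ell}(\gamma_{c,n})$. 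Extending to real $\ell>0$ and using the identity $(\ell+(n-2)/2)(\ell+(n-4)/2)=\ell(\ell+n-3)+(n-2)(n-4)/4$, differentiation of $\wti a_{\ell}(\gamma_{c,n})^{2}$ shows its only critical point in $(0,\infty)$ is the positive root of $\ell^{2}+(n-3)\ell-(n-2)(n-4)/8=0$, that is, $\ell=r(n)$ as in \eqref{4.23}; for $n\ge4$ this is a genuine maximum, $\wti a_{\ell}(\gamma_{c,n})$ being increasing on $(0,r(n))$ and decreasing on $(r(n),\infty)$, so its maximum over $\ell\in\bbN$ is attained at $\ell_{r,n}=\lceil r(n)\rceil$ or at $\ell_{r,n}-1$, yielding the case $n\ge4$ of \eqref{4.22} (for $n=4$, where $r(4)=0$, one checks directly that $\wti a_{\ell}(\gamma_{c,4})=[2\ell(\ell+1)]^{-1}$ is decreasing, with maximum $\wti a_{1}(\gamma_{c,4})=1/4$). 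For $n=3$, the coefficient $(n-2)(n-4)/8<0$, so $\wti a_{\ell}(\gamma_{c,3})$ is strictly decreasing on $(0,\infty)$, hence $\sup_{\ell\in\bbN}\wti a_{\ell}(\gamma_{c,3})=\wti a_{1}(\gamma_{c,3})=(1/\sqrt3)\big/[(3/2)(1/2)]=4/3^{3/2}$, and $\|K(\gamma_{c,3})\|\le8/3^{3/2}$.

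\emph{Main obstacle.} The only nonroutine step is the middle one: setting up the similarity correspondence between $K(\gamma)$ and the $\lambda_{\gamma,n,0}$-shifted reduced operator $\Lambda_{\gamma,\cL^{n}}$ and carrying the spectral lower bound $\Lambda_{\gamma,\cL^{n}}\ge\lambda_{\gamma,n,0}$ across it. Once one agrees to test only against finitely supported sequences and to invoke boundedness of $K(\gamma)$, the domain and weighted-space technicalities evaporate, and the remaining norm estimate is then a routine one-variable optimization whose critical point is precisely the root $r(n)$ of \eqref{4.23}.
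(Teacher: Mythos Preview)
Your proof is correct. The compactness and norm-bound arguments are essentially those of the paper (the paper cites \cite{Va96} for the former and \cite[Theorem~1.5]{Te99} for the latter, but the content is the same Schur-type bound $\|K\|\le 2\sup_\ell \wti a_\ell$ followed by a one-variable optimization yielding the root $r(n)$).

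Your treatment of \eqref{4.21a}, however, is genuinely different from the paper's. The paper argues by contradiction: if $-(1+\varepsilon)\in\sigma(\gamma K(\gamma))$, then unwinding \eqref{4.19} back to \eqref{4.1} produces an eigenfunction of $\Lambda_{\gamma/(1+\varepsilon),\cL^n}$ with eigenvalue $\lambda_{\gamma,n,0}$, which violates the strict monotonicity $\lambda_{\gamma,n,0}<\lambda_{\gamma/(1+\varepsilon),n,0}$ established in Theorem~\ref{t3.1}. You instead go directly: in the normalized Gegenbauer basis $\Lambda_{\gamma,\cL^n}$ is $D+\gamma J$, so $\Lambda_{\gamma,\cL^n}\ge\lambda_{\gamma,n,0}I$ becomes $\gamma J+w(\gamma)\ge 0$ on finitely supported vectors, and conjugating by $w(\gamma)^{-1/2}$ gives $\gamma K(\gamma)+I\ge 0$. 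This is cleaner---it uses only that $\lambda_{\gamma,n,0}$ is the infimum of the spectrum, not its strict monotonicity in $\gamma$---and it makes the structural reason for the bound transparent. The paper's route has the minor advantage of staying entirely within the eigenvalue-problem framework already set up, but yours is the more natural operator-theoretic argument. Your separate handling of $n=4$ (where $r(4)=0$ and $\wti a_\ell(\gamma_{c,4})=[2\ell(\ell+1)]^{-1}$ is monotone) is also a useful clarification, since the paper's assertion ``$0<r(n)\in\bbR\setminus\bbN$ for $n\ge 4$'' does not quite cover that boundary case.
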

\begin{proof}
The compactness assertion for $K(\gamma)$, $\gamma \in (0,\gamma_{c,n}]$, follows from the limiting behavior 
$\wti a_{\ell}(\gamma) \underset{\ell\to\infty}{\longrightarrow}0$, see, for instance, \cite[p.~201]{Va96}. 

To prove the uniform lower bound \eqref{4.21a} one can argue by contradiction as follows: Fix $\gamma \in (0,\gamma_{c,n}]$ and suppose there exists $\varepsilon > 0$ such that $-(1+\varepsilon) \in 
\sigma(K(\gamma))$, that is, $-(1+\varepsilon) \in \sigma_p(K(\gamma))$. Then working backwards from 
\eqref{4.19} to \eqref{4.1} yields the existence of $\{d_{\ell}(\varepsilon, \gamma)\}_{\ell \in \bbN_0} \in 
\ell^2(\bbN_0; w)$ such that 
\begin{align}
\begin{split} 
& -\f{d^{2}\Psi_{\varepsilon,\gamma} (\theta_{n-1})}{d\theta_{n-1}^{2}} 
- (n-2)\cot(\theta_{n-1})\f{d\Psi_{\varepsilon,\gamma}(\theta_{n-1})}{d\theta_{n-1}} +
\f{\gamma}{1+\varepsilon} \cos(\theta_{n-1}) \Psi_{\varepsilon, \gamma} (\theta_{n-1})    \\
& \quad = \lambda_{\gamma,n,0} \Psi_{\varepsilon, \gamma}(\theta_{n-1}), 
\quad \gamma \in (0,\gamma_{c,n}].      \lb{4.23a}
\end{split} 
\end{align}
where 
\begin{align}
\Psi_{\varepsilon, \gamma} (\theta_{n-1}) &=\sum\limits_{\ell=0}^{\infty} d_{\ell}(\varepsilon, \gamma) \bigg[\f{\ell !(2\ell+n-2)}{2^{4-n}\pi\Gamma(\ell+n-2)}\bigg]^{1/2}\Gamma((n-2)/2) C_{\ell}^{(n-2)/2}(\cos(\theta_{n-1})).     \lb{4.23b}
\end{align}
By the strict monotonicity of $ \lambda_{\gamma,n,0}$ with respect to $\gamma \in (0,\gamma_{c,n}]$ one infers that 
\begin{equation}
\lambda_{\gamma,n,0} < \lambda_{\gamma/(1 + \varepsilon),n,0},     \lb{4.23c} 
\end{equation}
and hence \eqref{4.23a} contradicts the fact that by definition, $\lambda_{\gamma/(1 + \varepsilon),n,0}$ is the lowest eigenvalue of $\Lambda_{\gamma/(1+\varepsilon),\cL^{n}}$.

To obtain the bound \eqref{4.22}, \eqref{4.23}, one applies \cite[Theorem 1.5]{Te99} after calculating 
$||\wti a(\gamma_{c,n})||_{\ell^{\infty}(\bbN_{0})}$.  
As $\wti a_{\ell,n}(\gamma_{c,n})$ is bounded and tends to 0 as $\ell\to\infty$ for all $n\geq 3$, it attains its supremum.  To find the index where this occurs, one considers $\ell$ as a continuous variable, and solves 
$\f{d \wti a_{\ell,n}(\gamma_{c,n})}{d\ell}=0$.  The value $r(n)$ emerges as the only nonnegative, real root of this expression, but as $0<r(n)\in\bbR\backslash\bbN$ for $n\geq 4$, the maximum in \eqref{4.22} and the ceiling function in \eqref{4.23} are required.  Since $r(3)\in\bbC\backslash\bbR$, the norm 
$||\wti a_{\,\cdot\,,3}(\gamma_{c,n})||_{\ell^{\infty}(\bbN_{0})}$ must be computed separately as 
$||\wti a_{\,\cdot\,,3}(\gamma_{c,n})||_{\ell^{\infty}(\bbN_{0})}=|\wti a_{1,3}(\gamma_{c,n})| = 8\big/\big[3^{3/2}\big]$.
\end{proof}

To introduce the notion of finite truncations, one considers the operators
\begin{align}
P_{m}=
\begin{pmatrix}
I_{\bbC^m} & 0\\
0 & 0
\end{pmatrix}
=I_{\bbC^m} \oplus 0,\quad K_{m}(\gamma_{c,n}) = P_{m}K(\gamma_{c,n})P_{m}, \quad m \in \bbN,
	\lb{4.24}
\end{align}
on $\ell^{2}(\bbN_{0})$ (with $I_{\bbC^m}$ denoting the identity matrix in $\bbC^{m}$, $m \in \bbN$).  

We also introduce the finite $N \times N$ tri-diagonal Jacobi matrices $J_N(\wti a_0,\dots,\wti a_{N-1})$ in $\bbC^N$, $N \in \bbN$, $N \geq 2$, 
denoted by 
\begin{equation}
J_N(\wti a_1,\dots, \wti a_{N-1}) = 
\begin{pmatrix}
0 & \wti a_{1} & \phantom{0} & \phantom{0} & \phantom{0} & \phantom{0} \\
\wti a_{1} & 0 & \wti a_{2} & \phantom{0} & \bf{0} & \phantom{0} \\
\phantom{0} & \wti a_{2} & 0 & \wti a_{3} & \phantom{0} & \phantom{0} \\
\phantom{0} & \phantom{0} & \wti a_{3} & 0 & \ddots & \phantom{0} \\
\phantom{0} & \bf{0} & \phantom{0} & \ddots & \ddots & \wti a_{N-1} \\
\phantom{0} & \phantom{0} & \phantom{0} & \phantom{0} & \wti a_{N-1} & 0 \\
\end{pmatrix}, 
\quad N \in \bbN, N \geq 2,
	\lb{4.25}   
\end{equation} 
in particular, 
\begin{align}
& {\det}_{\bbC^N}(z I_N - J_N(\wti a_1,\dots, \wti a_{N-1})) 
= z {\det}_{\bbC^{N-1}} (z I_{N-1} - J_{N-1}(\wti a_2,\dots, \wti a_{N-1})) 
\no \\
& \qquad - [\wti a_1]^2 {\det}_{\bbC^{N-2}} (z I_{N-2} - J_{N-2}(\wti a_3,\dots, \wti a_{N-1})), 
\quad z \in \bbC,  \no\\
& \quad = \begin{cases} 
z P_{(N-1)/2}\big(z^2\big), \quad P_{(N-1)/2} (0) \neq 0, & N \text{ odd,} \\
Q_{N/2}\big(z^2\big), \quad Q_{N/2}(0) \neq 0, & N \text{ even,} 
\end{cases}  
	\lb{4.26}
\end{align} 
where $P_{(N-1)/2}(\,\cdot\,)$ and $Q_{N/2}(\,\cdot\,)$ are monic polynomials of degree $(N-1)/2$ and $N/2$, respectively. 

Thus, the spectrum of each $J_N(\wti a_0,\dots, \wti a_{N-1})$ consists of $N$ real eigenvalues, symmetric with respect to the origin, the eigenvalues being simple as long as $\wti a_j > 0$, $1 \leq j \leq N-1$ (see, e.g., \cite[Theorem~II.1.1]{GK02}, \cite[Remark~1.10 and p.~120]{Te99}). 
Explicitly,
\begin{align}
& {\det}_{\bbC^2}((z I_{\bbC^2} - J_2(\wti a_1)) = z^2 - [\wti a_1]^2,   \no \\
& {\det}_{\bbC^3}((z I_{\bbC^3} - J_3(\wti a_1,\wti a_2)) 
= z \big\{z^2 - [\wti a_1]^2 - [\wti a_2]^2\big\},   \no  \\
& {\det}_{\bbC^4}((z I_{\bbC^4} - J_4(\wti a_1, \wti a_2, \wti a_3)) 
= z^4 - \big\{[\wti a_1]^2 + [\wti a_2]^2 + [\wti a_3]^2\big\} z^2 
+ [\wti a_1]^2 [\wti a_3]^2,     \lb{4.27} \\
& {\det}_{\bbC^5}((z I_{\bbC^5} - J_5(\wti a_1, \wti a_2, \wti a_3, \wti a_4)) 
= z \big\{z^4 - \big\{[\wti a_1]^2 + [\wti a_2]^2 + [\wti a_3]^2 + [\wti a_4]^2\big\} z^2 \no \\
& \hspace*{5.55cm} + [\wti a_1]^2 [\wti a_3]^2 + [\wti a_1]^2 [\wti a_4]^2 
+ [\wti a_2]^2 [\wti a_4]^2\big\},    \no \\
& \quad \text{etc.}   \no 
\end{align}

In addition, we introduce the unitary, self-adjoint, diagonal operator $W$ in $\ell^2(\bbN_0)$ as 
\begin{equation}
W = \big((-1)^p \delta_{p,q}\big)_{(p,q) \in \bbN_0^2}, \quad W^{-1} = W = W^*.
	\lb{4.28}
\end{equation}

\begin{theorem}	\lb{t4.2}
Given the operators $K(\gamma_{c,n})$, $K_{m}(\gamma_{c,n})$, $m \in \bbN$, and $W$ as in \eqref{4.19}, 
\eqref{4.24}--\eqref{4.28}, one concludes that 
$K(\gamma_{c,n})$ and $-K(\gamma_{c,n})$ as well as $K_m(\gamma_{c,n})$ and $- K_m(\gamma_{c,n})$ are unitarily equivalent,  
\begin{equation}
- K(\gamma_{c,n}) = W K(\gamma_{c,n}) W^{-1}, \quad 
- K_m(\gamma_{c,n}) = W K_m(\gamma_{c,n}) W^{-1}, \; m \in \bbN,    \lb{4.29} 
\end{equation}
and hence the spectra of $K(\gamma_{c,n})$ and $K_m(\gamma_{c,n})$, $m \in \bbN$, are symmetric with respect to zero. Moreover, all nonzero eigenvalues of $K(\gamma_{c,n})$ and $K_m(\gamma_{c,n})$, 
$m \in \bbN$, are simple. In addition,  
\begin{equation} 
\lim_{m \to \infty} \|K_{m}(\gamma_{c,n}) - K(\gamma_{c,n})\|_{\cB(\ell^2(\bbN_0))} =0,   \lb{4.30}
\end{equation} 
and\footnote{Here $\sigma_{ess}(\, \cdot \,)$ denotes the essential spectrum.}
\begin{align}
\begin{split} 
& \sigma(K(\gamma_{c,n}))=\underset{m\to\infty}\lim\sigma(K_{m}(\gamma_{c,n})),  \\
& \sigma_{ess}(K(\gamma_{c,n})) = \sigma_{ess}(K_{m}(\gamma_{c,n})) = \{0\}, \; m \in \bbN.     \lb{4.31}
\end{split} 
\end{align}
In particular, $\lambda\in\sigma(K(\gamma_{c,n}))$ if and only if there is a sequence 
$(\lambda_{m})_{m\in\bbN}$ with $\lambda_{m}\in\sigma(K_{m}(\gamma_{c,n}))$ such that 
$\lambda_{m}\underset{m\to\infty}{\longrightarrow}\lambda$.
\end{theorem}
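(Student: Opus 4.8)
The plan is to dispatch the listed claims in order, each following from the tridiagonal structure of $K(\gamma_{c,n})$, its compactness established in Proposition \ref{p4.1}, and self-adjointness. I would first verify the unitary equivalences \eqref{4.29} by an entrywise computation: since $W=\big((-1)^p\delta_{p,q}\big)$ is diagonal with $W^{-1}=W$, conjugation acts by $\big(W A W^{-1}\big)_{p,q}=(-1)^{p+q}A_{p,q}$, and for the tridiagonal matrix $K(\gamma_{c,n})$ this equals $-(K(\gamma_{c,n}))_{p,q}$ (the nonzero entries sit on $|p-q|=1$, where $(-1)^{p+q}=-1$; the diagonal vanishes), so $W K(\gamma_{c,n}) W^{-1}=-K(\gamma_{c,n})$; because $W$ and $P_m$ are both diagonal they commute, whence $W K_m(\gamma_{c,n}) W^{-1}=W P_m K(\gamma_{c,n}) P_m W^{-1}=P_m\big(-K(\gamma_{c,n})\big)P_m=-K_m(\gamma_{c,n})$. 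The symmetry of $\sigma(K(\gamma_{c,n}))$ and $\sigma(K_m(\gamma_{c,n}))$ about $0$ is then immediate from $\sigma(-A)=-\sigma(A)$ combined with $\sigma(WAW^{-1})=\sigma(A)$.

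For the simplicity of nonzero eigenvalues I would treat the two cases separately. The nonzero spectrum of $K_m(\gamma_{c,n})$ coincides with that of the finite Jacobi matrix $J_m\big(\wti a_1(\gamma_{c,n}),\dots,\wti a_{m-1}(\gamma_{c,n})\big)$ of \eqref{4.25}, whose off-diagonal entries are strictly positive (each factor in \eqref{4.20} is positive, using $\lambda_{\gamma_{c,n},n,0}=-(n-2)^2/4<0$ and $n\geq 3$); the simplicity of all nonzero eigenvalues of such a matrix is the standard fact recorded in \cite[Theorem~II.1.1]{GK02} and \cite[Remark~1.10]{Te99}. For the semi-infinite operator $K(\gamma_{c,n})$ one argues directly from the eigenvalue recurrence read off from \eqref{4.19}: if $K(\gamma_{c,n})\psi=\lambda\psi$ with $\psi\in\ell^2(\bbN_0)$ and $\lambda\neq 0$, the zeroth row $\wti a_1(\gamma_{c,n})\psi_1=\lambda\psi_0$ determines $\psi_1$ from $\psi_0$, and the three-term recurrence then determines every $\psi_\ell$ from $\psi_0$, so the eigenspace is at most one-dimensional.

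It remains to handle the approximation statements. I would write $K(\gamma_{c,n})-K_m(\gamma_{c,n})=(I-P_m)K(\gamma_{c,n})+P_m K(\gamma_{c,n})(I-P_m)$; since $P_m\to I$ strongly and $K(\gamma_{c,n})$ is compact one has $\|(I-P_m)K(\gamma_{c,n})\|\to 0$, and using $K(\gamma_{c,n})^*=K(\gamma_{c,n})$ also $\|K(\gamma_{c,n})(I-P_m)\|=\|(I-P_m)K(\gamma_{c,n})\|\to 0$, giving \eqref{4.30} (this is equally transparent from $\wti a_\ell(\gamma_{c,n})=\Oh(\ell^{-2})$ in \eqref{4.21}). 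Compactness of $K(\gamma_{c,n})$ and the finite rank of $K_m(\gamma_{c,n})$ on the infinite-dimensional space $\ell^2(\bbN_0)$ give $\sigma_{ess}(K(\gamma_{c,n}))=\sigma_{ess}(K_m(\gamma_{c,n}))=\{0\}$. For the spectral convergence in \eqref{4.31} and the final ``iff'' characterization, using that $K(\gamma_{c,n})$ and each $K_m(\gamma_{c,n})$ are bounded and self-adjoint, I would argue as follows: if $\lambda\notin\sigma(K(\gamma_{c,n}))$, a Neumann series with $\|K_m(\gamma_{c,n})-K(\gamma_{c,n})\|\to 0$ shows $\lambda\notin\sigma(K_m(\gamma_{c,n}))$ for all large $m$ (and the same for a whole neighborhood of $\lambda$), so limit points of $\bigcup_m\sigma(K_m(\gamma_{c,n}))$ lie in $\sigma(K(\gamma_{c,n}))$; conversely, if $\lambda\in\sigma(K(\gamma_{c,n}))$ but $\mathrm{dist}\big(\lambda,\sigma(K_m(\gamma_{c,n}))\big)\geq\delta>0$ along a subsequence, self-adjointness gives $\|(K_m(\gamma_{c,n})-\lambda)^{-1}\|\leq\delta^{-1}$, and then norm convergence together with a Neumann series forces $\lambda\notin\sigma(K(\gamma_{c,n}))$, a contradiction; hence $\mathrm{dist}\big(\lambda,\sigma(K_m(\gamma_{c,n}))\big)\to 0$, i.e.\ there exist $\lambda_m\in\sigma(K_m(\gamma_{c,n}))$ with $\lambda_m\to\lambda$.

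The step requiring the most care is the simplicity claim for the infinite operator $K(\gamma_{c,n})$: one must confirm that the left endpoint of the Jacobi recurrence acts as a regular boundary condition — so that the eigenvalue equation in $\ell^2(\bbN_0)$ has only a one-parameter solution family — and that the presence of $0$ in $\sigma_{ess}(K(\gamma_{c,n}))$ does not interfere with the nonzero eigenvalues; the remaining steps are a routine assembly of the compactness and self-adjointness arguments above.
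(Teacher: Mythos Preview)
Your proposal is correct and follows essentially the same approach as the paper: the paper also dispatches \eqref{4.29} as an ``elementary computation,'' derives simplicity from the fact that the half-lattice Jacobi operator with strictly positive off-diagonal entries does not decouple (which is exactly your recurrence argument), obtains \eqref{4.30} from $P_m\to I$ strongly together with compactness of $K(\gamma_{c,n})$, and then gets \eqref{4.31} by invoking the standard equivalence of norm convergence and norm-resolvent convergence for uniformly bounded self-adjoint operators (citing \cite[Theorems~VIII.23(a), VIII.24(a)]{RS80}) in place of your explicit Neumann-series argument. The only differences are in the level of detail---you spell out what the paper leaves to references or to the word ``elementary.''
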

\begin{proof}
The symmetry fact \eqref{4.29} follows from an elementary computation. That all eigenvalues of $K(\gamma_{c,n})$ are simple follows from the fact that $K(\gamma_{c,n})$ is a half-lattice operator with $\wti a_{\ell}(\gamma_{c,n}) > 0$, $\ell \in \bbN$, and hence the half-lattice does not decouple into a disjoint union of subsets (resp., $K(\gamma_{c,n})$ does not reduce to a direct sum of operators in $\ell^2(\bbN_0)$). The same argument applies to the finite-lattice operators $K_m(\gamma_{c,n})$, $m \in \bbN$.

One notices that $\slim_{m \to \infty} P_{m} = I_{\ell^2(\bbN_0)}$, where strong operator convergence is abbreviated by $\slim$. Together with the compactness of $K(\gamma_{c,n})$ given in Proposition \ref{p4.1}, one obtains 
\begin{equation}
\lim_{m\to\infty} \|P_{m}K(\gamma_{c,n}) - K(\gamma_{c,n})\|_{\cB(\ell^2(\bbN_0))} = 0,
   \lb{4.32} 
\end{equation}
applying \cite[Proposition 3.11]{Am80}.~The norm convergence in \eqref{4.32}, together with the uniform bound 
$\|P_m\|_{\cB(\ell^2(\bbN_0))} = 1$, 
$m \in \bbN$, yields \eqref{4.30}. The latter implies \eqref{4.31} as a consequence of \cite[Theorem~VIII.23\,(a) and 
Theorem~VIII.24\,(a)]{RS80} (see also \cite[Satz~9.24\,$a)$]{We00}), taking into account that norm resolvent convergence of a sequence of self-adjoint operators is equivalent to norm convergence of a uniformly bounded sequence of self-adjoint operators in a complex Hilbert space (see 
\cite[Theorem~VIII.18]{RS80}, \cite[Satz~9.22\,a)\,$(ii)$]{We00}). 
\end{proof}

Returning to the dipole context, one may now compute approximants of $\gamma_{c,n}$ by approximating the smallest negative eigenvalues of $K(\gamma_{c,n})$ in terms of the smallest negative eigenvalue of $K_{m}(\gamma_{c,n})$ with increasing $m \in \bbN$. Using $K_{7}$, (which produced 16 stable digits in the case $n=3$), one obtains the following values and approximants for $3 \leq n \leq 10$:

\medskip 

\begin{center}
\Large
 \begin{tabular}{||c | c | c | c | c||}
 \hline
 $n$ & lower bound for $\gamma_{c,n}$ & $\gamma_{c,n}$ & upper bound $\gamma_{c,n}$ \\ [0.5ex] 
 \hline 
 3 & 0.250 & 1.279 & 4.418 \\ 
 \hline
 4 & 1.000 & 3.790 & 5.890 \\
 \hline
 5 & 2.598 & 7.584 & 10.308 \\
 \hline
 6 & 5.846 & 12.672 & 17.672 \\
 \hline
 7 & 10.392 & 19.058 & 27.980 \\ 
 \hline
 8 & 16.238 & 26.742 & 41.233 \\
 \hline
 9 & 23.383 & 35.725 & 57.432 \\
 \hline
 10 & 31.826 & 46.006 & 76.576 \\ 
 \hline
\end{tabular}
\end{center}

\begin{figure}[h]
    \centering
    \includegraphics[width=0.96\textwidth]{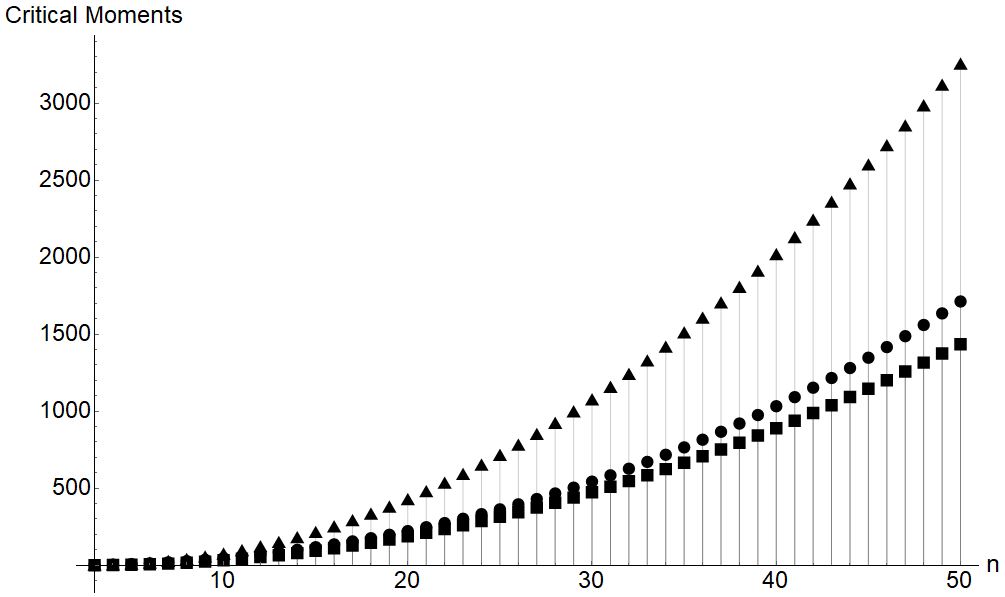}
    \caption{Dimension $n$ vs. Critical Dipole Moment $\gamma_{c,n}$ (\textbullet) with its upper ($\blacktriangle$) and lower ($\blacksquare$) bounds.}
    \label{fig:ngcb}
\end{figure}

Here the lower and upper bounds for $\gamma_{c,n}$ correspond to the values displayed in \eqref{3.41e} (see Fig.~1). 

The result for $\gamma_{c,3}$ is in excellent agreement with the ones found in the literature (see, e.g., 
\cite{AEG78}, \cite{BR67}, \cite{CG07}, \cite{Cr67}, \cite{FT47}, \cite{Le67}, \cite{Tu77}, and \cite{TF66}). 
The approximate values of $\gamma_{c,n}$ for $n \geq 4$ (but surprisingly, not for $n=3$) are in good agreement with those obtained in \cite[p.~98]{FMT08}.

\section{Multicenter Extensions} \lb{s5}

Combining the results of this manuscript with those in \cite{GMNT16} one can extend the scope of this investigation to include multicenter dipole interactions, that is, sums of point dipoles supported on an infinite discrete set (a set of distinct points spaced apart by a minimal distance $\varepsilon > 0$). For various related studies on multicenter singular interactions, see, for instance, \cite{BDE08}, \cite{Ca16}, \cite{Du06}, \cite{FMT07}, \cite{FMT08}, \cite{FMT09}, 
\cite{FMO20}, \cite{GMNT16}, \cite{HG80}, \cite{LP20}, \cite{Te96}. 

To set the stage, we recall the notion of relative form boundedness in the special context of self-adjoint  operators.

\begin{definition} \lb{d5.1}
Suppose that $A$ is self-adjoint in a complex Hilbert space $\cH$ and bounded from below, that is,
$A \ge c I_{\cH}$ for some $c\in\R$. Then the sesquilinear form $Q_A$ associated with $A$ is denoted by
\begin{equation}
Q_A(f,g) = \big((A - c I_{\cH})^{1/2} f, (A - c I_{\cH})^{1/2} g\big)_{\cH} + c (f,g)_{\cH}, \quad 
f \in \dom\big(|A|^{1/2}\big).     \lb{5.1} 
\end{equation}
A sesquilinear form $q$ in $\cH$ satisfying $\dom(q) \supseteq \dom(Q_A)$ is called {\it bounded with respect to the form $Q_A$} if for some $a, b \geq 0$,
\begin{equation}
|q(f,f)| \leq a \, Q_A(f,f) + b \, \|f\|_{\cH}^2, \quad f \in \dom(Q_A). 
\end{equation}
The infimum of all numbers $a$ for which there exists $b \in [0,\infty)$ such that \eqref{5.1} holds is called the bound of $q$ with respect to $Q_A$. 
\end{definition}

\medskip

The following result is a variant of \cite[Theorem~3.2]{GMNT16}, which in turn is an abstract version of 
Morgan \cite[Theorem~2.1]{Mo79} (see also \cite[Proposition~3.3]{CFKS87}, \cite{Is61}, \cite[Sect.\ 4]{Ki81}). Throughout this section, infinite sums are understood in the weak operator topology and $J \subseteq \N$ 
denotes an index set.

\begin{lemma} \lb{l5.2}  
Suppose that $T$ is a self-adjoint operator in $\cH$ bounded from below, $T \geq c I_{\cH}$ for some 
$c \in \bbR$, and $W$ is a self-adjoint operator in $\cH$ such that  
\begin{equation} 
\dom\big(|T|^{1/2}\big)\subseteq\dom\big(|W|^{1/2}\big).     \lb{5.3} 
\end{equation} 
We abbreviate 
\begin{equation}
q_W(f,g) = \big(|W|^{1/2}f, \sgn(W) |W|^{1/2}g\big)_{\cH}, \quad f \in \dom\big(|W|^{1/2}\big).
\end{equation} 
Let $d, D \in (0,\infty)$, $e \in [0,\infty)$, assume that $\Phi_j \in \cB(\cH)$, $j \in J$, leave 
$\dom\big(|T|^{1/2}\big)$ invariant, that is, 
\begin{equation} 
\Phi_j \dom\big(|T|^{1/2}\big) \subseteq \dom\big(|T|^{1/2}\big), \quad j \in J, 
\end{equation} 
and suppose that the following conditions $(i)$--$(iii)$ hold: \\[.5mm]  
$(i)$ $\sum_{j  \in J} \Phi_j^* \Phi_j \leq I_{\cH}$. \\[1mm] 
$(ii)$ $\sum_{j \in J} |q_W(\Phi_jf, \Phi_jf)| \geq D^{-1} |q_W(f,f)|$, \, $f \in \dom\big(|T|^{1/2}\big)$. \\[1mm] 
$(iii)$ $\sum_{j  \in J}  \| |T|^{1/2} \Phi_j f\|_{\cH}^2 \le d\| |T|^{1/2} f\|_{\cH}^2 
+ e\|f\|_{\cH}^2$, \, $f \in \dom\big(|T|^{1/2}\big)$. \\[1mm] 
Then,
\begin{equation}
|q_W(\Phi_j f,\Phi_j f)| \leq a \, q_T(\Phi_j f, \Phi_j f)
+ b \|\Phi_j f\|_{\cH}^2, \quad f \in\dom(|T|^{1/2}), \; j\in J,   \lb{5.6}
\end{equation} 
implies
\begin{equation}
|q_W(f,f)|  \leq a\,d\,D \,q_T(f,f) + [a\,e + b]D \,\|f\|_{\cH}^2, \quad f \in\dom(|T|^{1/2}). 
\end{equation} 
\end{lemma}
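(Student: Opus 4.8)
The plan is to chain the three hypotheses together in the standard Morgan/localization computation. Fix $f \in \dom\big(|T|^{1/2}\big)$. By the invariance assumption $\Phi_j \dom\big(|T|^{1/2}\big) \subseteq \dom\big(|T|^{1/2}\big)$, $j \in J$, combined with \eqref{5.3}, one has $\Phi_j f \in \dom\big(|T|^{1/2}\big) \subseteq \dom\big(|W|^{1/2}\big)$ for every $j$, so that the quantities $q_T(\Phi_j f, \Phi_j f)$ and $q_W(\Phi_j f, \Phi_j f)$ all make sense and the per-center bound \eqref{5.6} is applicable to each $j \in J$.

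First I would invoke condition~$(ii)$ to pass from $f$ to its localized pieces, then insert \eqref{5.6} termwise and split the series, obtaining
\[
|q_W(f,f)| \leq D \sum_{j \in J} |q_W(\Phi_j f, \Phi_j f)|
\leq a D \sum_{j \in J} q_T(\Phi_j f, \Phi_j f) + b D \sum_{j \in J} \|\Phi_j f\|_{\cH}^2 .
\]
The last series is estimated by condition~$(i)$, since $\sum_{j \in J} \|\Phi_j f\|_{\cH}^2 = \big(f, \big(\textstyle\sum_{j \in J} \Phi_j^* \Phi_j\big) f\big)_{\cH} \leq \|f\|_{\cH}^2$. The first series is estimated by condition~$(iii)$ (using condition~$(i)$ once more to absorb, if $c \neq 0$, the lower-order $\|\Phi_j f\|_{\cH}^2$-terms arising in the passage between $q_T$ and $\big\||T|^{1/2}\,\cdot\,\big\|_{\cH}^2$), yielding $\sum_{j \in J} q_T(\Phi_j f, \Phi_j f) \leq d\, q_T(f,f) + e\,\|f\|_{\cH}^2$. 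Substituting both bounds gives
\[
|q_W(f,f)| \leq a D \big[ d\, q_T(f,f) + e\,\|f\|_{\cH}^2 \big] + b D\,\|f\|_{\cH}^2
= a\,d\,D\, q_T(f,f) + [a\,e + b]\, D\,\|f\|_{\cH}^2 ,
\]
which is the asserted inequality, so that the bound of $q_W$ relative to $q_T$ is at most $a\,d\,D$.

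This computation is essentially formal, and the points that genuinely need care are bookkeeping rather than conceptual. First, the infinite sums are taken in the weak operator topology (as stipulated before the lemma), so one must check that the scalar series of nonnegative terms $\sum_j \|\Phi_j f\|_{\cH}^2$, $\sum_j q_T(\Phi_j f, \Phi_j f)$ and $\sum_j |q_W(\Phi_j f, \Phi_j f)|$ converge and that the interchanges of summation with the inner product in $(i)$ and with the form bounds in $(ii)$, $(iii)$ are legitimate; this is handled by monotone convergence on the partial sums, the required majorants being furnished precisely by $(i)$ and $(iii)$. Second, one must track the normalization of $q_T$ (the form associated with $T$, equivalently with $|T|^{1/2}$ as in $(iii)$, the two coinciding when $T \geq 0$) through the additive constant $c$; this only produces multiples of $\|\,\cdot\,\|_{\cH}^2$, which are harmless because $(i)$ forces $\sum_j \|\Phi_j f\|_{\cH}^2 \leq \|f\|_{\cH}^2$. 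I do not anticipate any obstacle beyond this: the substance of the lemma is simply the multiplicative way in which the three constants $d$, $D$ and $a$ recombine.
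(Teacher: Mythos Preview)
Your proposal is correct and follows exactly the same three-line chain as the paper: apply $(ii)$, then \eqref{5.6} termwise, then $(i)$ and $(iii)$. If anything, you are slightly more careful than the paper, which silently identifies $q_T(\Phi_j f,\Phi_j f)$ with $\big\||T|^{1/2}\Phi_j f\big\|_{\cH}^2$ and does not comment on the convergence of the series.
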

\begin{proof}
For $f \in \dom\big(|T|^{1/2}\big)$ one computes
\begin{align} 
|q_W(f,f)|  &\leq D \sum_{j  \in J} |q_W(\Phi_j f,\Phi_j f)|  
\quad \text{(by $(ii)$)}   \no \\ 
&\leq D \sum_{j  \in J}  \left[ a \big\||T|^{1/2} \Phi_j f \big\|_{\cH}^2 + b \|\Phi_j f\|_{\cH}^2 \right]
\quad \text{(by \eqref{5.6})}   \no \\ 
&\leq a\, d \, D \big\|  |T|^{1/2} f \big\|_{\cH}^2 + (a\,e\,D + b\,D) \|f\|_{\cH}^2 
\quad \text{(by $(i)$ and $(iii)$)}     \no \\ 
& = a\,d\,D \,q_T(f,f) + [a\,e + b]D \,\|f\|_{\cH}^2,     
\end{align}
completing the proof.
\end{proof}

\begin{remark} \lb{r5.3}
Considering the concrete case of 
\begin{align}
& H_0 = - \Delta, \quad \dom(H_0) = H^2(\R^n),    \no \\
& \, Q_{H_0}(f,g) = \big((-\Delta)^{1/2} f, (-\Delta)^{1/2} g\big)_{L^2(\bbR^n)} 
= (\nabla f, \nabla g)_{[L^2(\bbR^n)]^n},     \lb{5.9} \\ 
& \hspace*{5.2cm} f, g \in \dom(Q_{H_0}) = H^1(\bbR^n),    \no 
\end{align}
in $L^2(\R^n)$, and assuming that $W$, the operator of multiplication with a measurable and 
a.e.~real-valued function 
$W(\, \cdot \,)$, employing a slight abuse of notation, satisfies \eqref{5.3} (for sufficient conditions on $W$, see, e.g., \cite[Theorems~10.17\,(b), 10.18]{We80} with $r=1$).
Let $\{\phi_j\}_{j\in J}$, $J\subseteq \N$, be a family of smooth, real-valued functions defined 
on $\R^n$ in such a manner that for each $x \in \R^n$, there exists an open neighborhood $U_x \subset \R^n$ of $x$ such that there exist only finitely many indices $k \in J$ with 
$\supp \, (\phi_k) \cap U_x \neq \emptyset$ and  $\phi_k|_{U_x} \neq 0$, as well as 
\begin{equation}
\sum_{j\in J} \phi_j(x)^2=1, \quad x \in \R^n    \lb{5.10}
\end{equation}
(the sum over $j \in J$ in \eqref{5.10} being finite). Finally, let $\Phi_j$ be the operator of 
multiplication by the function $\phi_j$, 
$j\in J$. Then one notes that for these choices, hypothesis $(i)$ holds with equality, and 
hypothesis $(ii)$ with $D=1$ follows from $(i)$. 
Moreover, item $(iii)$ holds with $d=1$ as long as 
\begin{equation}
e = \bigg\| \sum_{j \in J}  |\nabla \phi_j(\cdot)|^2 \bigg\|_{L^{\infty}(\R^n)} < \infty.   \lb{5.11} 
\end{equation}
To verify this, one observes that 
$\| |H_0|^{1/2} \phi f\|_{L^2(\R^n)}^2 = \int_{\R^n} d^n x \, |\nabla (\phi(x) f(x))|^2$ 
and that the cross terms vanish since $\sum_{j \in J}  \phi_j(x) (\nabla \phi_j)(x) =0$, 
$x \in \R^n$, by condition \eqref{5.10}. (We note again that the latter sum over $j \in J$ contains only 
finitely many terms in every bounded neighborhood of $x \in \R^n$.) \hfill $\diamond$
\end{remark}

Strongly singular potentials that are covered by Lemma \ref{l5.2} are, for instance, of the 
following form: Let $J\subseteq \N$ be an index set, $\varepsilon > 0$, and $\{x_j\}_{j\in J}\subset\R^n$, 
$n \in \N$, $n \geq 3$, be a set of points such that 
\begin{equation}
\inf_{\substack{j, j' \in J \\ j \neq j'}} |x_j - x_{j'}| \geq \varepsilon.    \lb{5.12} 
\end{equation} 
In addition, let $\gamma_j \in \R$, $j \in J$, $\gamma_0 \in [0, \infty)$, with 
\begin{equation}
|\gamma_j| \leq \gamma_0 < (n -2)^2/4, \quad j \in J,    \lb{5.13} 
\end{equation}
and 
\begin{equation}
W_{\{\gamma_j\}_{j \in J}}(x) = \sum_{j \in J} \gamma_j |x - x_j|^{-2} \chi_{B_n(x_j; \varepsilon/4)}(x) 
+ W_0(x),     \lb{5.14} 
\quad x \in \R^n \backslash \{x_j\}_{j \in J},    
\end{equation}
with 
\begin{equation}
W_0 \in L^{\infty}(\bbR^n), \, \text{ $W_0$ real-valued~a.e.~on $\bbR^n$,}    \lb{5.14a} 
\end{equation}
and $B_n(x_0;r)$ the open ball in $\bbR^n$ of radius $r>0$, centered at $x_0 \in \bbR^n$.

Then an application of Hardy's inequality in $\R^n$, $n \geq 3$ (cf.\ \eqref{1.1}), shows that 
$W_{\{\gamma_j\}_{j \in J}}$ is form bounded with respect to $T_0$ in \eqref{5.9} with form bound 
strictly less than one.

At this point one can extend existing results of \cite{FMT08}, \cite{FMT09} regarding quadratic form estimates for multicenter dipole interactions as follows. 

\begin{theorem} \lb{t5.4}
Given \eqref{5.9}--\eqref{5.12} and $W_0$ in \eqref{5.14a}, we introduce $\gamma_0, \gamma_j \in [0,\infty)$, $j \in J$, satisfying  
\begin{equation}
0 \leq \gamma_j \leq \gamma_0 < \gamma_{c,n},  \quad j \in J,    \lb{5.15} 
\end{equation}
and 
\begin{align}
\begin{split} 
q_{\{\gamma_j\}_{j \in J}}(f,g) &= \sum_{j \in J} \gamma_j \int_{\bbR^n} d^nx \, 
(u, (x-x_j)) |x-x_j|^{-3} \chi_{B_n(x_j;\varepsilon/4)}(x) \ol{f(x)} g(x),    \\ 
& \quad + \int_{\bbR^n} d^n x \, W_0 (x) \ol{f(x)} g(x),  \quad f, g \in H^1(\bbR^n).  
\end{split} 
\end{align}
Then $q_{\{\gamma_j\}_{j \in J}}$ is bounded with respect to $Q_{T_0}$ in \eqref{5.9} with form bound 
strictly less than one. 
\end{theorem}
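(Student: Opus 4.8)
The plan is to localize around the individual centers $x_j$, reduce to the single-center estimate \eqref{3.2} of Theorem~\ref{t3.1}, and combine the pieces by Lemma~\ref{l5.2}. Since $W_0\in L^\infty(\bbR^n)$, the form $f\mapsto\int_{\bbR^n}d^nx\,W_0(x)|f(x)|^2$ is bounded with respect to $Q_{T_0}$ with form bound $0$, so by additivity of such bounds it suffices to treat the purely singular part; concretely, I would take $T=T_0=-\Delta$ and $W=W_{\mathrm{sing}}$, the self-adjoint operator of multiplication by $W_{\mathrm{sing}}(x)=\sum_{j\in J}V_j(x)$ with $V_j(x)=\gamma_j(u,(x-x_j))|x-x_j|^{-3}\chi_{B_n(x_j;\varepsilon/4)}(x)$, noting $q_{T_0}(g,g)=\|\nabla g\|_{L^2(\bbR^n)}^2=Q_{T_0}(g,g)$ and $q_{\{\gamma_j\}}(f,f)=q_W(f,f)+\int_{\bbR^n}d^nx\,W_0(x)|f(x)|^2$.

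For the localization I would fix a profile $\phi_0\in C_0^\infty(B_n(0;\varepsilon/3))$ with $0\le\phi_0\le1$ and $\phi_0\equiv1$ on $\overline{B_n(0;\varepsilon/4)}$, put $\phi_j=\phi_0(\,\cdot\,-x_j)$, and let $\Phi_j$ be multiplication by $\phi_j$, $j\in J$. By \eqref{5.12} the sets $\supp(\phi_j)$ are pairwise disjoint (their centers are $\ge\varepsilon>\tfrac23\varepsilon$ apart), so $\sum_{j\in J}\phi_j^2\le1$ pointwise, giving hypothesis~$(i)$; moreover $\supp(\phi_{j'})\cap\overline{B_n(x_j;\varepsilon/4)}=\emptyset$ for $j'\neq j$ (since $\tfrac13\varepsilon+\tfrac14\varepsilon<\varepsilon$) while $\phi_j\equiv1$ on $\supp(V_j)$, so $q_W(\Phi_jf,\Phi_jf)=\int_{\bbR^n}V_j|f|^2$, and therefore $\sum_{j\in J}|q_W(\Phi_jf,\Phi_jf)|=\sum_{j\in J}\big|\int_{\bbR^n}V_j|f|^2\big|\ge\big|\int_{\bbR^n}W_{\mathrm{sing}}|f|^2\big|=|q_W(f,f)|$, i.e., $(ii)$ with $D=1$. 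For $(iii)$, disjointness of supports and $2\,\mathrm{Re}(\overline f\,\nabla f)=\nabla(|f|^2)$ give, with $\sigma:=\sum_{j\in J}\phi_j^2\in C^\infty$ and $0\le\sigma\le1$,
\[
\sum_{j\in J}\big\||T_0|^{1/2}\Phi_jf\big\|_{L^2(\bbR^n)}^2=\sum_{j\in J}\int_{\bbR^n}|\nabla(\phi_jf)|^2=\int_{\bbR^n}\sigma\,|\nabla f|^2+\int_{\bbR^n}\Big(\sum_{j\in J}|\nabla\phi_j|^2-\tfrac12\Delta\sigma\Big)|f|^2\le\big\||T_0|^{1/2}f\big\|_{L^2(\bbR^n)}^2+e\,\|f\|_{L^2(\bbR^n)}^2,
\]
so $(iii)$ holds with $d=1$ and $e:=\big\|\sum_{j\in J}|\nabla\phi_j|^2-\tfrac12\Delta\sigma\big\|_{L^\infty(\bbR^n)}\le\|\nabla\phi_0\|_\infty^2+\tfrac12\|\Delta(\phi_0^2)\|_\infty<\infty$ (uniform in $j$ because the $\phi_j$ are disjoint translates of $\phi_0$); this is the analogue of the computation in Remark~\ref{r5.3}. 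Finally, Hardy's inequality \eqref{1.1} applied on each $B_n(x_j;\varepsilon/4)$ after translation, together with the last display, shows $\dom(|T_0|^{1/2})=H^1(\bbR^n)\subseteq\dom(|W_{\mathrm{sing}}|^{1/2})$.

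The crux is the per-center bound \eqref{5.6}. For $j\in J$ and $g=\phi_jf\in H^1(\bbR^n)\subseteq D^1(\bbR^n)$ ($n\ge3$), I would split off the exterior tail of the cutoff,
\[
\int_{\bbR^n}\chi_{B_n(x_j;\varepsilon/4)}(u,x-x_j)|x-x_j|^{-3}|g|^2=\int_{\bbR^n}(u,x-x_j)|x-x_j|^{-3}|g|^2-\int_{\{|x-x_j|\ge\varepsilon/4\}}(u,x-x_j)|x-x_j|^{-3}|g|^2,
\]
apply \eqref{3.2} (translated to be centered at $x_j$) to the first term and the elementary estimate $|(u,x-x_j)|\,|x-x_j|^{-3}\le|x-x_j|^{-2}\le16\varepsilon^{-2}$ on $\{|x-x_j|\ge\varepsilon/4\}$ to the second, obtaining $|q_W(\Phi_jf,\Phi_jf)|=\gamma_j\big|\int_{\bbR^n}\chi_{B_n(x_j;\varepsilon/4)}(u,x-x_j)|x-x_j|^{-3}|g|^2\big|\le\tfrac{\gamma_j}{\gamma_{c,n}}q_{T_0}(\Phi_jf,\Phi_jf)+\tfrac{16\gamma_j}{\varepsilon^2}\|\Phi_jf\|_{L^2(\bbR^n)}^2$, hence \eqref{5.6} with $a=\gamma_0/\gamma_{c,n}$ and $b=16\gamma_0/\varepsilon^2$ uniformly in $j$, using \eqref{5.15}. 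Lemma~\ref{l5.2} (with $d=D=1$) then gives $|q_W(f,f)|\le(\gamma_0/\gamma_{c,n})\,Q_{T_0}(f,f)+\big((\gamma_0/\gamma_{c,n})e+16\gamma_0/\varepsilon^2\big)\|f\|_{L^2(\bbR^n)}^2$ for $f\in H^1(\bbR^n)$, and adding back the $W_0$-term (bound $0$) leaves the form bound equal to $\gamma_0/\gamma_{c,n}<1$. The one delicate point is that $\gamma_{c,n}$ in \eqref{3.2} is the sharp single-center constant, so one must keep the sign of the dipole and isolate the bounded exterior tail of $\chi_{B_n(x_j;\varepsilon/4)}$ rather than bound $|(u,x-x_j)|$ by $|x-x_j|$ — the latter would only yield the non-sharp constant $(n-2)^2/4$ and a form bound that need not be below one; everything else is the routine Morgan-type localization encoded in Lemma~\ref{l5.2} and Remark~\ref{r5.3}.
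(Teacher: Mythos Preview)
Your proof is correct and follows essentially the same strategy as the paper: reduce to the single-center inequality \eqref{3.2} and then glue via the Morgan-type localization Lemma~\ref{l5.2} with $d=D=1$. The only (minor) differences are that you use disjoint bumps with $\sum_j\phi_j^2\le 1$ rather than the partition of unity $\sum_j\phi_j^2=1$ of Remark~\ref{r5.3} (so you verify hypothesis $(ii)$ directly and pick up the extra $-\tfrac12\Delta\sigma$ term in $(iii)$), and you make explicit the ``exterior tail'' split needed to pass from the truncated dipole $\chi_{B_n(x_j;\varepsilon/4)}$ to the full dipole in \eqref{3.2}, a point the paper's proof leaves implicit in passing from \eqref{5.18} to the conclusion.
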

\begin{proof}
Without loss of generality we put $W_0 = 0$.
Inequality \eqref{3.2} and the analogous inequality with $u \in \bbR^n$, $|u| =1$, replaced by $-u$ yields 
\begin{align} 
\begin{split} 
&\text{for all $\gamma \in [0,\gamma_{c,n}]$,} \\
& \quad \int_{\bbR^n} d^n x \, |(\nabla f)(x)|^2 \geq \pm \gamma \int_{\bbR^n} d^n x \, (u, x) |x|^{-3} |f(x)|^2, 
\quad f \in H^1(\bbR^n),     \lb{5.17}
\end{split} 
\end{align}  
and hence for some $\varepsilon_{\gamma_0} \in (0,1)$ and $c(\varepsilon_{\gamma_0}) \in (0,\infty)$, 
\begin{align} 
\begin{split} 
& \pm \gamma_0 \int_{\bbR^n} d^n x \, (u, x) |x|^{-3} |f(x)|^2 \leq 
(1 - \varepsilon_{\gamma_0}) \big\|\nabla f\big\|_{[L^2(\bbR^n)]^n}^2    \\
& \quad + c(\varepsilon_{\gamma_0}) \|f\|_{L^2(\bbR^n)}^2,  \quad  
\gamma_0 \in[0, \gamma_{c,n}), \; f \in H^1(\bbR^n).     \lb{5.18}
\end{split} 
\end{align}

Thus, an application of Lemma \ref{l5.2}, taking into account that $d=D=1$ as described in 
Remark \ref{r5.3}, implies for some $C(\varepsilon_{\gamma_0}) \in (0,\infty)$, 
\begin{align}
\begin{split} 
|q_{\{\gamma_j\}_{j \in J}}(f,f)| \leq (1 - \varepsilon_{\gamma_0}) \big\|\nabla f\big\|_{[L^2(\bbR^n)]^n}^2 
+ C(\varepsilon_{\gamma_0}) \|f\|_{L^2(\bbR^n)}^2,&  \\
f \in H^1(\bbR^n),&  
\end{split} 
\end{align}
as was to be proven. 
\end{proof}

Thus, Theorem \ref{t5.4} proves semiboundedness of the self-adjoint multicenter dipole Hamiltonian 
$H_{\{\gamma_j\}_{j \in J}}$ in $L^2(\bbR^n)$, uniquely associated with the quadratic form sum
\begin{equation}
Q_{H_{\{\gamma_j\}_{j \in J}}} = Q_{T_0} + q_{\{\gamma_j\}_{j \in J}}, 
\quad \dom(Q_{H_{\{\gamma_j\}_{j \in J}}}) = H^1(\bbR^n), 
\end{equation}
under very general hypotheses on  $\{x_j\}_{j\in J}\subset\R^n$ and $\{\gamma_j\}_{j \in J}$. We note that \cite{FMT08}, \cite{FMT09} derive sufficient conditions $\{x_j\}_{j\in J}\subset\R^n$ and $\{\gamma_j\}_{j \in J}$ to guarantee nonnegativity of $H_{\{\gamma_j\}_{j \in J}}$ and also discuss situations characterized by the lack of nonnegativity of $H_{\{\gamma_j\}_{j \in J}}$. 

Finally, we sketch how Remark \ref{r3.6} extends to the multicenter situation.

\begin{remark} \lb{r5.5}
Assume \eqref{5.9}--\eqref{5.12}, let $W_0$ as in \eqref{5.14a}, suppose $\gamma_j \in [0,\infty)$, 
and introduce 
\begin{align}
& V(x) = \sum_{j \in J} \big[\gamma_j (u,(x-x_j)) |x-x_j|^{-3} + \wti V_j(|x-x_j|)\big] 
\chi_{B_n(x_j; \varepsilon/4)}(x) + W_0(x),     \no \\ 
& \hspace*{8.2cm} x \in \R^n \backslash \{x_j\}_{j \in J},    
\end{align}
with 
\begin{equation}
r \wti V_j(r) \in L^1((0,\varepsilon); dr) \cap L^{\infty}_{loc}((0,\varepsilon]; dr), \quad j \in J.  
\end{equation}
Consider the minimally defined Schr\"odinger operator $\dot H_{\{\gamma_j\}_{j \in J}}$ in 
$L^2(\bbR^n)$ given by
\begin{equation}
\dot H_{\{\gamma_j\}_{j \in J}} = - \Delta + V(\, \cdot \,), \quad 
\dom\big(\dot H_{\{\gamma_j\}_{j \in J}}\big) = C_0^{\infty}(\bbR^n \backslash \{x_j\}_{j \in J}). 
\end{equation}
Then the criterion \eqref{3.51} and \eqref{2.34}, \eqref{2.35} combined with 
\cite[Theorems~1.1 and 5.8]{GMNT16} imply that 
\begin{align}
\begin{split}
& \text{$\dot H_{\{\gamma_j\}_{j \in J}}$ is essentially self-adjoint} \\
& \quad \text{if and only if $\lambda_{\gamma_j,n,0} 
\geq - n(n-4)/4$ for each $j \in J$.} 
\end{split} 
\end{align}
${}$ \hfill $\diamond$ 
\end{remark}

\appendix

\section{Spherical Harmonics and the Laplace--Beltrami Operator in $L^2\big(\bbS^{n-1}\big)$, $n \geq 2$.} \lb{sA} 

In this appendix we summarize some of the results on spherical harmonics and the Laplace--Beltrami operator on the unit sphere $\bbS^{n-1}$ in dimensions $n \in \bbN$, $n \geq 2$, following 
\cite[Chs.~2,3]{AH12}, \cite[Ch.~1]{DX13}, and \cite[Ch.~2]{He99}.

Assuming $n \in \bbN$, $n \geq 2$, cartesian and polar coordinates (cf.\, e.g., \cite{Bl60}) on $\bbS^{n-1}$ are given by 
\begin{align} 
& x = (x_1,\dots,x_n) \in \bbR^n,    \no \\
&  x = r \omega, \; \omega = \omega(\theta) = \omega(\theta_{1},\theta_{2},\dots,\theta_{n-1}) 
= x/|x| \in \bbS^{n-1},   \lb{A.1} \\
& x_k \in \bbR, \, 1 \leq k \leq n, \; r = |x| \in[0,\infty), \; \theta_{1}\in[0,2\pi), \; \theta_{j}\in[0,\pi), \, 2 \leq j\leq n-1,   \no 
\end{align} 
where (cf., e.g., \cite{Bl60}, \cite[Sect.~1.5]{DX13}) 
\begin{equation}
\begin{cases}
x_{1}=r \cos(\theta_1) \prod\limits_{j=2}^{n-1} \sin(\theta_j),   \\[1mm] 
x_{2}=r \sin(\theta_1) \prod\limits_{j=2}^{n-1} \sin(\theta_j),   \\
\; \vdots\\
x_{n-1}=r \cos(\theta_{n-2}) \sin(\theta_{n-1}), \\
x_{n}=r \cos(\theta_{n-1}).
\end{cases}
	\lb{A.2}
\end{equation}
The surface measure $d^{n-1}\omega$ on $\bbS^{n-1}$ and the volume element in $\bbR^n$ then read
\begin{equation}
d^{n-1}\omega(\theta) = d \theta_1 \prod_{j=2}^{n-1}[\sin(\theta_j)]^{j-1} d\theta_j, 
\quad d^{n}x=r^{n-1}dr\,d^{n-1}\omega (\theta),    \lb{A.3}
\end{equation}
in particular, the area $\omega_n$ of the unit sphere $\bbS^{n-1}$ in $\bbR^n$ is given by (cf.\  \cite[p.~2]{Mu66})
\begin{align}
\omega_{n}=\int_{\bbS^{n-1}}d^{n-1}\omega (\theta) = 2\pi^{n/2}/\Gamma(n/2).
	\lb{A.4}
\end{align}

Turning to spherical harmonics next, we recall that a  homogeneous polynomial $P(x_{1},\ldots,x_{n})$ of degree $\ell \in \bbN_0$ (in $n$ variables) satisfies $P(tx_{1},\ldots,tx_{n})=t^{n}P(x_{1},\ldots,x_{n})$ and is a linear combination of terms of degree $\ell$.  The space of such polynomials with real coefficients  is denoted $\mathscr{P}_{\ell}^{n}$.  We define the harmonic homogeneous polynomials of degree $\ell$ in $n$ variables by
\begin{align}
\mathscr{H}_{\ell}^{n} = \big\{P\in\mathscr{P}_{\ell}^{n} \, \big| \, \Delta P=0\big\},
	\lb{A.5}
\end{align}
where $ \Delta$ represents the Laplace differential expression on $\bbR^n$.  Restricting the elements of $\mathscr{H}_{\ell}^{n}$ to the sphere $\bbS^{n-1}$, one obtains $\cY_{\ell}^{n}$, the space of spherical harmonics of degree $\ell$ in $n$ dimensions.  Spaces of different degrees are orthogonal with respect to the real inner product on the sphere, 
\begin{align} 
\begin{split} 
(Y,Z)_{L^{2}(\bbS^{n-1})}=\int_{\bbS^{n-1}} d^{n-1}\omega (\theta) \, Y(\theta) Z(\theta)  =0,&    \\
Y\in\cY_{\ell}^{n}, \, Z\in\cY_{\ell'}^{n}, \; \ell, \ell' \in \bbN_0, \, \ell\neq\ell'.&  \lb{A.6}	
\end{split} 
\end{align}
The dimension of $\cY_{\ell}^{n}$ equals that of $\mathscr{H}_{\ell}^{n}$ and is given by
(\cite[Corollary~1.1.4]{DX13}) 
\begin{align}
\dim(\mathscr{H}_{\ell}^{n})=\binom{\ell+n-1}{\ell}-\binom{\ell+n-3}{\ell-2}=\f{2\ell+n-2}{\ell+n-2}\binom{\ell+n-2}{n-2},      \lb{A.7}
\end{align}
where we use the convention that the second binomial coefficient equals 0 when 
$\ell=0,1$, and replace the final fraction by 1 in the case where $n=2$ and $\ell=0$.  This is equivalently formulated in \cite[Lemma~3, p.~4]{Mu66} as the generating series
\begin{align}
\f{1+x}{(1-x)^{n-1}}=\sum\limits_{\ell=0}^{\infty} \dim(\cY_{\ell}^{n}) \, x^{\ell}. 
	\lb{A.8}
\end{align}

Most importantly, the spherical harmonics are the eigenfunctions of the 
Laplace--Beltrami operator $ \Delta_{\bbS^{n-1}}$ in $L^2\big(\bbS^{n-1}\big)$, satisfying the eigenvalue equation
\begin{align}
(- \Delta_{\bbS^{n-1}}Y)(\theta)=\ell(\ell+n-2)Y(\theta),\quad Y\in\cY_{\ell}^{n}, \; \ell \in \bbN_0.
	\lb{A.9}
\end{align}

Following \cite[Sect.~1.5]{DX13} an explicit characterization for the spherical harmonics reads as follows: Introducing the multi-index $\alpha=(\alpha_{1},\ldots,\alpha_{n})\in\bbN_{0}^{n}$, with 
$|\alpha|=\sum\limits_{j=1}^{n}\alpha_{j}$, and $\theta = (\theta_{1},\ldots,\theta_{n-1})$, the spherical harmonics are of the form
\begin{align}
Y_{\alpha}(\theta)=[N_{\alpha}]^{-1} g_{\alpha}(\theta_{1})\prod\limits_{j=1}^{n-2}[\sin(\theta_{n-j})]^{|\alpha^{j+1}|}C_{\alpha_{j}}^{\nu_{j}}(\cos(\theta_{n-j})),
	\lb{A.10}
\end{align}
where 
\begin{align} 
&|\alpha^{j}|=\sum\limits_{k=j}^{n-1}\alpha_{k}, \quad \,\nu_{j}=|\alpha^{j+1}|+ [(n-j-1)/2],	\lb{A.11} \\
&\,g_{\alpha}(\theta_{1})=\begin{cases} \cos(\alpha_{n-1}\theta_{1}), & \alpha_{n}=0, \lb{A.12} \\
\sin(\alpha_{n-1}\theta_{1}), &\alpha_{n}=1,\end{cases}    \\
& [N_{\alpha}]^{2}=b_{\alpha}\prod_{j=1}^{n-2}\f{[\alpha_{j}!] ([(n-j+1)/2])_{|\alpha^{j+1}|}(\alpha_{j}+\nu_{j})}{(2\nu_{j})_{\alpha_{j}}([(n-j)/2])_{|\alpha^{j+1}|}\nu_{j}}, \quad
b_{\alpha}=\begin{cases} 2, &\alpha_{n-1}+\alpha_{n}>0,\\ 1, & \text{otherwise.} \end{cases}
	\lb{A.13}
\end{align}
Here the Pochhammer symbol $(x)_{a}$ is defined by
\begin{equation}
(x)_0 =1, \quad (x)_{n} = \Gamma(x +n)/\Gamma(n) = x(x+1) \cdots (x+n-1), \quad n \in \bbN, 
\lb{A.14} 
\end{equation}
and $C^{\lambda}_n(\, \cdot \,)$ represent the Gegenbauer (or ultrasperical) polynomials, see, for instance,  \cite[Ch.~22]{AS72}, \cite[Appendix~B]{DX13}. 

The set $\{Y_{\alpha} \, | \,  |\alpha|=\ell,\alpha_{n}=0,1\}$ represents an orthonormal basis of $\cY_{\ell}^{n}$.

Finally, we recall the expression of the Laplace--Beltrami differential expression on $\bbS^{n-1}$ in spherical coordinates.  From \cite[p.~94]{AH12}, \cite[Lemma~1.4.2]{DX13}, one obtains the recursion\footnote{For clarity we indicate the space dimension $n \in \bbN$ as a subscript in the Laplacian $- \Delta_n$ for the remainder of this appendix. \lb{f6}} 
  
\begin{align}
-\Delta_{\bbS^1} &= -\df{\p^{2}}{\p\theta_{1}^{2}},     \no \\ 
- \Delta_{\bbS^2} &= - \f{1}{\sin(\theta_2)} 
\f{\partial}{\partial \theta_2} \bigg(\sin(\theta_2) \f{\partial}{\partial \theta_2}\bigg)  
- \f{1}{\sin^2(\theta_2)} \f{\partial^2}{\partial \theta_1^2},    \lb{A.15} \\ 
- \Delta_{\bbS^{n-1}} &= -\df{\p^{2}}{\p\theta_{n-1}^{2}} -(n-2)\cot(\theta_{n-1})\df{\p}{\p\theta_{n-1}} 
- [\sin(\theta_{n-1})]^{-2}\Delta_{\bbS^{n-2}}, \quad n \geq 3.    \no 
\end{align} 
Explicitly (cf.\ \cite[p.~19]{DX13}),  
\begin{align}
\begin{split}
-  \Delta_{\bbS^{n-1}} &= - [\sin(\theta_{n-1})]^{2-n} \f{\partial}{\partial \theta_{n-1}} 
\bigg[[\sin(\theta_{n-1})]^{n-2} \f{\partial}{\partial \theta_{n-1}}\bigg]      \\
& \quad - \sum_{j=1}^{n-2} \bigg(\prod_{k=j+1}^{n-1} [\sin(\theta_{k})]^{-2}\bigg) [\sin(\theta_j)]^{1-j}  
\f{\partial}{\partial \theta_j} \bigg[[\sin(\theta_j)]^{j-1} \f{\partial}{\partial \theta_j}\bigg]    \lb{A.16} \\
&= - \sum_{j=1}^{n-1} \bigg(\prod_{k=1}^{j-1} [\sin(\theta_{n-k})]^{-2}\bigg) [\sin(\theta_{n-j})]^{1-j-n}   \\
& \hspace*{1.4cm} \times 
\f{\partial}{\partial \theta_{n-j}} \bigg[[\sin(\theta_{n-j})]^{n-j-1} \f{\partial}{\partial \theta_{n-j}}\bigg]. 
\end{split}
\end{align}

\medskip 

\noindent {\bf Acknowledgments.}
We are indebted to Mark Ashbaugh, Andrei Mart\'inez-Finkel- shtein, and Gerald Teschl 
for very helpful comments. We are also very grateful to the anonymous referee for constructive criticism. 


\end{document}